\newcommand{\deleted}[1]{}
\newcommand{\delete}[1]{}
\newcommand{\mynotes}[1]{}
\newcommand\notes[1]{}
\newcommand\changed[1]{#1}
\newtheorem{theorem}{Theorem}[section]
\newtheorem{lemma}[theorem]{Lemma}
\newtheorem{prop}[theorem]{Proposition}
\theoremstyle{definition}
\newtheorem{defn}[theorem]{Definition}
\newtheorem{remark}[theorem]{Remark}
\newtheorem{exam}[theorem]{Example}
\newtheorem{prop-def}{Proposition-Definition}[section]
\newtheorem{coro-def}{Corollary-Definition}[section]
\newcommand{\nc}{\newcommand}
\nc{\tred}[1]{\textcolor{red}{#1}} \nc{\tblue}[1]{\textcolor{blue}{#1}} \nc{\tgreen}[1]{\textcolor{green}{#1}} \nc{\tpurple}[1]{\textcolor{purple}{#1}} \nc{\btred}[1]{\textcolor{red}{\bf #1}} \nc{\btblue}[1]{\textcolor{blue}{\bf #1}} \nc{\btgreen}[1]{\textcolor{green}{\bf #1}} \nc{\btpurple}[1]{\textcolor{purple}{\bf #1}}
\renewcommand{\Bbb}{\mathbb}
\newcommand{\efootnote}[1]{}
\newcommand\wyscco[1]{}
\renewcommand{\textbf}[1]{}
\nc{\mlabel}[1]{\label{#1}}  % Use this to suppress names
\nc{\mcite}[1]{\cite{#1}}  % Use this to suppress names
\nc{\mref}[1]{\ref{#1}}  % Use this to suppress names
\nc{\mbibitem}[1]{\bibitem{#1}} % Use this to show number
\nc{\mlabel}[1]{\label{#1}{\hfill \hspace{1cm}{\bf{{\ }\hfill(#1)}}}}
\nc{\mcite}[1]{\cite{#1}{{\bf{{\ }(#1)}}}}  % Use this lines to show names
\nc{\mref}[1]{\ref{#1}{{\bf{{\ }(#1)}}}}  % Use this lines to show names
\nc{\mbibitem}[1]{\bibitem[\bf #1]{#1}} % Use this to show name
\renewcommand\geq{\geqslant}
\renewcommand\leq{\leqslant}
\renewcommand\bar[1]{\overline{#1}}
\renewcommand\tilde[1]{\widetilde{#1}}
\nc\kdot{\bfk}
\nc\simple{simple\xspace}
\nc{\rbw}{\mathfrak{R}} \nc{\brp}{\mathrm{brp}} \nc{\lead}{\mathrm{Lead}} \nc{\Id}{\mathrm{Id}} \nc{\Irr}{\mathrm{Irr}} \nc{\vx}{\sigma} \nc{\vy}{\tau} \nc{\dvx}{\sigma^{(1)}} \nc{\dvy}{\tau^{(1)}} \nc{\done}{\vep} \nc{\citep}[1]{\cite{#1}} \nc{\wt}{\mathrm{wt}} \nc{\bre}[1]{|#1|} \nc{\mapmonoid}{\frakM} \nc{\disjoint}{\frakM'}
\nc{\ncpoly}[1]{\langle #1\rangle}  %for noncommutative polynomials
\nc{\mapm}[1]{\frakM(#1)}
\nc{\diff}[1]{{}^\NC\{ #1 \}} \nc{\disj}[1]{\{{#1}\}'} \nc{\mdisj}[1]{\frakM'(#1)} \nc{\brho}{\bar{\rho}} \nc{\om}{\bar{\frakm}} \nc{\frakn}{\mathfrak n} \nc{\ddeg}[1]{^{(#1)}} \nc{\opset}{X} \nc{\genset}{{Z}} \nc{\NC}{\mathrm{{NC}}} \nc{\leaf}{\mathrm{leaf}} \nc{\twig}{\mathrm{twig}} \nc{\fe}{\mathrm{fl}} \nc{\munderline}[1]{#1} \nc{\bo}{o} \nc{\dep}{\mathrm{dep}} \nc{\ofe}{\mathrm{ofl}} \nc{\dfe}{\mathrm{dfe}} \nc{\fex}{\mathrm{fex}} \nc{\dl}{\mathrm{dlex}} \nc{\db}{\mathrm{db}} \nc{\lex}{\mathrm{lex}} \nc{\clex}{\mathrm{clex}} \nc{\dgp}{\mathrm{dgp}} \nc{\dgx}{\mathrm{dgx}} \nc{\br}{\mathrm{br}} \nc{\obd}{\mathrm{odb}} \nc{\ob}{\mathrm{ob}}
\nc{\loc}{location\xspace}
\nc{\occ}{occurrence\xspace}
\nc{\occs}{occurrences\xspace}
\nc{\pla}{placement\xspace}
\nc{\plas}{placements\xspace}
\nc{\bin}[2]{ (_{\stackrel{\scs{#1}}{\scs{#2}}})}  %binomial coeff
\nc{\binc}[2]{ \left (\!\! \begin{array}{c} \scs{#1}\\
    \scs{#2} \end{array}\!\! \right )}  %binomial coeff
\nc{\bincc}[2]{  \left ( {\scs{#1} \atop
    \vspace{-1cm}\scs{#2}} \right )}  %binomial coeff
\nc{\bs}{\bar{S}} \nc{\cosum}{\sqsubset} \nc{\la}{\longrightarrow} \nc{\rar}{\rightarrow} \nc{\dar}{\downarrow} \nc{\dprod}{**} \nc{\dap}[1]{\downarrow \rlap{$\scriptstyle{#1}$}} \nc{\md}{\mathrm{dth}} \nc{\uap}[1]{\uparrow \rlap{$\scriptstyle{#1}$}} \nc{\defeq}{\stackrel{\rm def}{=}} \nc{\disp}[1]{\displaystyle{#1}} \nc{\dotcup}{\ \displaystyle{\bigcup^\bullet}\ } \nc{\gzeta}{\bar{\zeta}} \nc{\hcm}{\ \hat{,}\ } \nc{\hts}{\hat{\otimes}} \nc{\barot}{{\otimes}} \nc{\free}[1]{\widetilde{#1}} \nc{\uni}[1]{\tilde{#1}} \nc{\hcirc}{\hat{\circ}} \nc{\leng}{\ell} \nc{\lleft}{[} \nc{\lright}{]} \nc{\lc}{\lfloor} \nc{\rc}{\rfloor}
\nc{\lb}{[} %left bracket
\nc{\rb}{]} %right bracket
\nc{\curlyl}{\left \{ \begin{array}{c} {} \\ {} \end{array}
    \right.  \!\!\!\!\!\!\!}
\nc{\curlyr}{ \!\!\!\!\!\!\!
    \left. \begin{array}{c} {} \\ {} \end{array}
    \right \} }
\nc{\longmid}{\left | \begin{array}{c} {} \\ {} \end{array}
    \right. \!\!\!\!\!\!\!}
\nc{\onetree}{\bullet} \nc{\ora}[1]{\stackrel{#1}{\rar}}
\nc{\ola}[1]{\stackrel{#1}{\la}}%${\Bbb Z}$
\nc{\ot}{\otimes} \nc{\mot}{{{\boxtimes\,}}} \nc{\otm}{\overline{\boxtimes}} \nc{\sprod}{\bullet} \nc{\scs}[1]{\scriptstyle{#1}} \nc{\mrm}[1]{{\rm #1}} \nc{\msum}{\sum\limits}
\nc{\margin}[1]{\marginpar{\rm #1}}   %{\rm #1}}
\nc{\dirlim}{\displaystyle{\lim_{\longrightarrow}}\,} \nc{\invlim}{\displaystyle{\lim_{\longleftarrow}}\,} \nc{\mvp}{\vspace{0.3cm}} \nc{\tk}{^{(k)}} \nc{\tp}{^\prime} \nc{\ttp}{^{\prime\prime}} \nc{\svp}{\vspace{2cm}} \nc{\vp}{\vspace{8cm}} \nc{\proofbegin}{\noindent{\bf Proof: }}
\nc{\proofend}{$\blacksquare$ \vspace{0.3cm}}
\nc{\modg}[1]{\!<\!\!{#1}\!\!>}
\nc{\intg}[1]{F_C(#1)} \nc{\lmodg}{\!<\!\!} \nc{\rmodg}{\!\!>\!} \nc{\cpi}{\widehat{\Pi}}
\nc{\sha}{{\mbox{\cyr X}}}  %used to be \cyr
\nc{\shap}{{\mbox{\cyrs X}}} %sha as product
\nc{\shpr}{\diamond}    %Shuffle product
\nc{\shp}{\ast} \nc{\shplus}{\shpr^+}
\nc{\shprc}{\shpr_c}    %Cartier's product
\nc{\msh}{\ast} \nc{\zprod}{m_0} \nc{\oprod}{m_1} \nc{\vep}{\varepsilon} \nc{\labs}{\mid\!} \nc{\rabs}{\!\mid}
\nc{\astarrow}{\overset{\raisebox{-2pt}{{\scriptsize $\ast$}}}{\rightarrow}}
\nc{\astlarrow}{\overset{\raisebox{-2pt}{{\scriptsize $\ast$}}}{\longrightarrow}}
\nc{\lastarrow}{\overset{\raisebox{-2pt}{{\scriptsize $\ast$}}}{\leftarrow}}
\nc{\mastarrow}[1]{\overset{\raisebox{-2pt}{{\scriptsize $#1$}}}{\rightarrow}}
\nc{\quvarrow}[3]{#1 \overset{q,u,v}{\longrightarrow}_{#3} #2}
\nc{\quvkto}[1]{f_{#1} \overset{q_{#1}, u_{#1}, v_{#1}}{\longrightarrow}_\phi g_{#1}}
\nc{\tvarrow}[3]{#1 \overset{(t,v)}{\longrightarrow}_{#3} #2}
\nc{\Supp}{{\rm Supp}}
\nc{\mpu}{u^{\ast}}
\nc{\mpv}{v^{\ast}}
\nc{\mpw}{w^{\ast}}
\nc{\mpx}{x^{\ast}}
\nc{\dps}{\dotplus}
\nc{\dth}{d} \nc{\mmbox}[1]{\mbox{\ #1\ }} \nc{\fp}{\mrm{FP}} \nc{\rchar}{\mrm{char}} \nc{\Fil}{\mrm{Fil}} \nc{\Mor}{Mor\xspace} \nc{\gmzvs}{gMZV\xspace} \nc{\gmzv}{gMZV\xspace} \nc{\mzv}{MZV\xspace} \nc{\mzvs}{MZVs\xspace} \nc{\Hom}{\mrm{Hom}} \nc{\id}{\mrm{id}} \nc{\im}{\mrm{im}} \nc{\incl}{\mrm{incl}} \nc{\map}{\mrm{Map}} \nc{\mchar}{\rm char} \nc{\nz}{\rm NZ} \nc{\supp}{\mathrm Supp}
\nc{\mo}{\mathbf o}
\nc{\pl}{\mathfrak{p}}
\nc{\Alg}{\mathbf{Alg}} \nc{\Bax}{\mathbf{Bax}} \nc{\bff}{\mathbf f} \nc{\bfk}{{\bf k}} \nc{\bfone}{{\bf 1}} \nc{\bfx}{\mathbf x} \nc{\bfy}{\mathbf y}
\nc{\base}[1]{\bfone^{\otimes ({#1}+1)}} %{{a_{#1}}}
\nc{\Cat}{\mathbf{Cat}} \delete{}
\nc{\detail}{\marginpar{\bf More detail}
    \noindent{\bf Need more detail!}
    \svp}
\nc{\Int}{\mathbf{Int}} \nc{\Mon}{\mathbf{Mon}}
\nc{\rbtm}{{shuffle }} \nc{\rbto}{{Rota-Baxter }} \nc{\remarks}{\noindent{\bf Remarks: }} \nc{\Rings}{\mathbf{Rings}} \nc{\Sets}{\mathbf{Sets}}
\nc{\vwpt}{{Let $V$ be a free $\bfk$-module with a $\bfk$-basis $W$ and let $\Pi$ be a \simple term-rewriting system on $V$ with respect to $W$.}\xspace}
\nc{\BA}{{\Bbb A}} \nc{\CC}{{\Bbb C}} \nc{\DD}{{\Bbb D}} \nc{\EE}{{\Bbb E}} \nc{\FF}{{\Bbb F}} \nc{\GG}{{\Bbb G}} \nc{\HH}{{\Bbb H}} \nc{\LL}{{\Bbb L}} \nc{\NN}{{\Bbb N}} \nc{\KK}{{\Bbb K}} \nc{\QQ}{{\Bbb Q}} \nc{\RR}{{\Bbb R}} \nc{\TT}{{\Bbb T}} \nc{\VV}{{\Bbb V}} \nc{\ZZ}{{\Bbb Z}}
\nc{\cala}{{\mathcal A}} \nc{\calc}{{\mathcal C}} \nc{\cald}{{\mathcal D}} \nc{\cale}{{\mathcal E}} \nc{\calf}{{\mathcal F}} \nc{\calg}{{\mathcal G}} \nc{\calh}{{\mathcal H}} \nc{\cali}{{\mathcal I}} \nc{\call}{{\mathcal L}} \nc{\calm}{{\mathcal M}} \nc{\caln}{{\mathcal N}} \nc{\calo}{{\mathcal O}} \nc{\calp}{{\mathcal P}} \nc{\calr}{{\mathcal R}} \nc{\cals}{{\mathcal S}} \nc{\calt}{{\mathcal T}} \nc{\calw}{{\mathcal W}}
\nc{\calv}{{\mathcal V}}
\nc{\calk}{{\mathcal K}} \nc{\calx}{{\mathcal X}} \nc{\CA}{\mathcal{A}}
\nc{\fraka}{{\mathfrak a}} \nc{\frakA}{{\mathfrak A}} \nc{\frakb}{{\mathfrak b}} \nc{\frakB}{{\mathfrak B}} \nc{\frakC}{{\mathfrak C}}
\nc{\frakD}{{\mathfrak D}} \nc{\frakH}{{\mathfrak H}} \nc{\frakM}{{\mathfrak M}} \nc{\bfrakM}{\overline{\frakM}} \nc{\frakm}{{\mathfrak m}} \nc{\frkP}{{\mathfrak P}}
\nc{\frakN}{{\mathfrak N}} \nc{\frakp}{{\mathfrak p}} \nc{\fraku}{{\mathfrak u}} \nc{\frakv}{{\mathfrak v}}
\nc{\frakQ}{{\mathfrak Q}}\nc{\frakR}{{\mathfrak R}} \nc{\frakS}{{\mathfrak S}}
\nc{\frakx}{{\mathfrak x}} \nc{\ox}{\bar{\frakx}} \nc{\frakX}{{\mathfrak X}} \nc{\fraky}{{\mathfrak y}}
\nc\dop{\delta}
\nc{\Reduce}{{\rm Red}}
\font\cyr=wncyr10 \font\cyrs=wncyr7
\nc{\redt}[1]{\textcolor{red}{#1}}
\nc{\li}[1]{\textcolor{red}{Li:#1}} %%when accepted, removed
\nc{\lio}[1]{}
\nc{\sz}[1]{\textcolor{green}{sz:#1}}
\nc{\szo}[1]{}
\nc{\xing}[1]{\textcolor{purple}{Xing:#1}}
\nc{\ws}[1]{\textcolor{blue}{{#1}}} %%These are removed as of June 15, 2014.
\nc{\wsc}[1]{\textcolor{blue}{ws:#1}} %%comments as of June 15, 2014
\nc{\wsco}[1]{}
\nc{\wsn}[1]{\textcolor{magenta}{#1}} %%These are changes as of June 15, 2014
\nc{\medmid}{{\,~{\tiny \longmid}~\,}}
 \nc{\lbar}[1]{\overline{#1}}
\nc{\anf}{\bf $\phi$-NF} \nc{\lto}{\longrightarrow}
\nc{\good}{good\xspace} \nc{\sgood}{super good\xspace}  \nc{\Good}{Good\xspace}
\nc{\brw}{\frakM(Z)} \nc{\irr}{{\rm Irr}} \nc{\pis}{\Pi_S}
\nc{\term}{term\xspace} \nc{\re}[1]{R(#1)} \nc{\sumre}[2]{R^{#1}_{#2}}
\nc{\stars}[2]{#1|_{#2}}\nc{\nbfk}{\bfk^{\times}} \nc{\ord}{{\rm ord}}
\nc{\tpi}{\to_{\Pi}}
\nc{\tpsi}{\to_{\psi}} \nc{\tphi}{\to_{\phi}} \nc{\tvarphi}{\to_{\varphi}}
\nc{\two}{(2)} \nc{\three}{(3)}
\nc{\first}{\alpha} \nc{\second}{\beta}
\nc{\firstx}{\first(x_1, x_2)}\nc{\secondx}{\second(x_1, x_2)}
\nc{\firstu}{\first(u_1, u_2)}\nc{\secondv}{\second(v_1, v_2)}
\nc{\bfirst}{\lbar{\first}} \nc{\bsecond}{\lbar{\second}} \nc{\mstar}{\frakM^\star(X)}
\nc{\xu}{\leq} \nc{\tto}{\leftarrow}
\begin{document}
\title{Averaging algebras, rewriting systems and
Gr\"obner-Shirshov bases}

\author{Xing Gao} \address{School of Mathematics and Statistics,
Key Laboratory of Applied Mathematics and Complex Systems,
Lanzhou University, Lanzhou, 730000, P.R. China}
\email{gaoxing@lzu.edu.cn}

\author{Tianjie Zhang} \address{School of Mathematics and Statistics,
Lanzhou University, Lanzhou, 730000, P.R. China}
\email{907105216@qq.com}

%========================================================================
\hyphenpenalty=8000
\date{\today}

\begin{abstract}
In this paper, we study the averaging operator by assigning a rewriting system to it.
We obtain some basic results on the kind of rewriting system we used. In particular, we obtain a sufficient and necessary condition for the confluence. We supply the relationship between rewriting systems and
Gr\"obner-Shirshov bases based on bracketed polynomials.
As an application, we give a basis of the free unitary averaging algebra on a non-empsty set.
\end{abstract}

\delete{
\begin{keyword}

\end{keyword}
}

\maketitle

\tableofcontents

\hyphenpenalty=8000 \setcounter{section}{0}

%========================================================================

\section{Introduction}
The averaging operators are generalizations of conditional expectation in probability theory~\mcite{Moy}, and are closely related to Reynolds operators, symmetric operators and Rota-Baxter
operators~\mcite{GM, Tri, Bong}. The study of averaging operators originated from a famous paper on turbulence theory by Reynolds in 1895~\mcite{Re}. The explicit definition of averaging operators
was given in 1930s~\mcite{KF}. Since then there is an extensive literature on averaging operators under various contexts, which can be grouped into two classes. The first one is mainly analytic and for different varieties of averaging algebras; see the references~\mcite{Birk, Fech, Kell, Mill, Moy, Ro0}.
The other class is from an algebraic point of view.
%with viewpoints toward algebra. 
%Let us amplify upon this below.
Cao~\mcite{Cao} constructed the free commutative averaging algebras and characterized the
naturally induced Lie algebra structures from averaging operators.
Aguiar proved that the diassociative algebra---the
enveloping algebra of the Leibniz algebra~\mcite{Lo2}---can be obtained from the averaging associative
algebra~\mcite{Agu}. Recently, Guo et al. acquired a relationship between averaging operators and Rota-Baxter operators: the algebraic structures resulted from the actions of
the two operators are Koszul dual to each other~\mcite{GP}. It is worth mentioning that
the Rota-Baxter operator has broad connections with many areas in mathematics
and mathematical physics~\mcite{Bai, Ba, Gub}. In~\mcite{GP}, Guo et al. also
constructed the free nonunitary (noncommutative)
averaging algebra on a non-empty set in terms of a class of bracketed words, by checking the
universal property.
It is natural to construct further the free unitary (noncommutative)
averaging algebra on a non-empty set---our main object of study in the present paper.

Gr\"{o}bner and Gr\"{o}bner-Shirshov bases theory was initiated independently by Shirshov~\mcite{Sh}, Hironaka~\mcite{Hi} and Buchberger~\mcite{Buch}.
It has been proved to be very useful in different branches of mathematics, including commutative algebras and combinatorial algebras, see~\mcite{BC, BCC, BCQ}.
Abstract rewriting system is a branch of theoretical computer science, combining elements of logic, universal algebra, automated theorem proving and functional programming~\mcite{BN, Oh}.
The theories of Gr\"{o}bner-Shirshov bases and rewriting systems are successfully applied to study operators and operator polynomial identities~\mcite{GGSZ, GSZ}.

In the present paper along this line, using the theories of Gr\"{o}bner-Shirshov bases and
rewriting systems, we construct
a basis of the free unitary (noncommutative)
averaging algebra on a non-empty set.
Terminating and confluence
are essential and desirable properties of a rewriting system.
To use the tools of Gr\"{o}bner-Shirshov bases and rewriting systems,
we obtain a sufficient and necessary condition for the confluence of the kind of rewriting system we used.
We supply the relationship between Gr\"{o}bner-Shirshov bases and rewriting systems
based on bracketed polynomials. Applying the method we obtained for checking confluence, we successfully prove that the rewriting system associated to the averaging operator is confluent and then convergent with a suitable order. Let us emphasize that there are a lot of forks in the process of checking confluence. We handle technically most of them in a unified way. These techniques can also be used to study other operators. It is well known that in the category of any given algebraic
structure, the free objects play a central role in study other objects.
Thus as an application, we give a basis of the free unitary (noncommutative) averaging algebra on a non-empty set.

%Compared with the basis of  free averaging algebra in~\mcite{GP}, let us
%emphasize that the basis we obtained is different and is natural with
%respect to the monomial order we used, see Remark~\mref{re:comp}.
Our characterization of averaging operators in terms of Gr\"{o}bner-Shirshov
bases and rewriting systems reveals the power of this approach. It
would be interesting to further study operators and operator polynomial
identities by making use of the two related theories: Gr\"{o}bner-Shirshov
bases and rewriting systems.

The \emph{layout of the paper} is as follows. In Section~\mref{sec:back}, we first recall the concepts of averaging algebras and free operated algebras. We next recall some necessary backgrounds of Gr\"{o}bner-Shirshov bases and rewriting systems. We obtain some basic results on the kind of
rewriting system we used. In particular, we obtain a sufficient and necessary condition to characterize the confluence (Theorem~\mref{thm:linec}).
We end this section by supplying the relationship between the two powerful tools---Gr\"{o}bner-Shirshov bases and rewriting systems (Theorem~\mref{thm:convsum}).
Section~\mref{sec:average} is devoted to a basis of the free unitary averaging algebra on a non-empty set. In order to achieve this purpose, we assign a rewriting system to the averaging operator (Eq.~(\mref{eq:T1})). We show this rewriting system is convergent (Theorem~\mref{thm:pconf}). We end this section by giving a basis of the free unitary (noncommutative) averaging algebra on a non-empty set (Theorem~\mref{thm:basis}).

Some remark on \emph{notation}. We fix a domain \bfk\ and a non-empty set $X$. Denote by $\nbfk:= \bfk\setminus \{0\}$ the subset of nonzero elements. We denote the \bfk-span of a set $Y$ by $\bfk Y$. For an algebra, we mean a unitary associative noncommutative \bfk-algebra, unless specified otherwise. For any set $Y$, let $M(Y)$ be the free monoid on $Y$ with identity $1$. We use $\sqcup$ for disjoint union.

\section{Gr\"{o}bner-Shirshov bases and rewriting systems}
\mlabel{sec:back}

In this section, we first recall the definition of averaging algebras and characterize free averaging algebras as quotients of free operated algebras. We then recall some backgrounds on Gr\"{o}bner-Shirshov bases and rewriting systems.

\subsection{Free averaging algebras}
\label{sec:freemonoid}

An averaging algebra in the noncommutative context is given
as follows.

\begin{defn}
A linear operator $A$ on a \bfk-algebra $R$ is called an \emph{ averaging operator } if
\begin{equation*}
A(u_1)A(u_2) = A(A(u_1)u_2)= A(u_1A(u_2))\, \text{ for all } u_1, u_2 \in R.
\end{equation*}
A \bfk-algebra $R$ together with an averaging operator $A$ on $R$ is called an \emph{averaging algebra}.
\mlabel{defn:defna}
\end{defn}

To characterize the free averaging algebra, let us recall the free operated algebra~\mcite{BCQ, Gop, Ku}.

%A.\,G. Kurosh~\cite{Ku} introduced firstly the concept of algebras with linear operators, named $\Omega$-algebras. In~\cite{Gop}, it was called operated algebras and the construction of free operated algebras was obtained. See also~\cite{BCQ}.

\begin{defn}
{\rm An {\bf operated monoid} (resp. {\bf operated $\bfk$-algebra}, resp. {\bf operated $\bfk$-module}) is a monoid (resp. $\bfk$-algebra, resp. $\bfk$-module) $U$ together with a map (resp. $\bfk$-linear map, resp. $\bfk$-linear map) $P_U: U\to U$.
A morphism from an operated monoid\, (resp. $\bfk$-algebra, resp. $\bfk$-module) $(U, P_U)$ to an operated monoid (resp. $\bfk$-algebra, resp. $\bfk$-module) $(V,P_V)$ is a monoid (resp. $\bfk$-algebra, resp. $\bfk$-module) homomorphism $f :U\to V$ such that $f \circ P_U= P_V \circ f$. } \label{de:mapset}
\end{defn}

For any set $Y$, define
$$\lc Y\rc := \{ \lc y\rc \mid y\in Y \},$$
which is a disjoint copy of $Y$. The following is the construction of the free operated monoid on the set $X$, proceeding via the finite stage $\frakM_n(X)$ recursively defined as follows.
Define
$$\frakM_0(X) := M(X)\,\text{ and }\, \frakM_1(X) := M(X \sqcup \lc \frakM_0(X)\rc).$$
Then the inclusion $X\hookrightarrow X \sqcup \lc \frakM_0\rc $ induces a monomorphism
$$i_{0}:\mapmonoid_0(X) = M(X) \hookrightarrow \mapmonoid_1(X) = M(X \sqcup \lc \frakM_0\rc  )$$
of monoids through which we identify $\mapmonoid_0(X) $ with its image in $\mapmonoid_1(X)$.
Suppose that
$\frakM_{ n-1}(X)$ has been defined and the embedding
$$i_{n-2,n-1}\colon  \frakM_{ n-2}(X) \hookrightarrow \frakM_{ n-1}(X)$$
has been obtained for~$n\geq 2$ and consider the case of $n$. Define
\begin{equation*}
 \label{eq:frakn}
 \frakM_{ n}(X) := M \big( X\sqcup\lc\frakM_{n-1}(X) \rc\big).
\end{equation*}
Since~$\frakM_{ n-1}(X) = M \big(X\sqcup \lc\frakM_{ n-2}(X) \rc\big)$ is the free monoid on the set $X\sqcup \lc\frakM_{ n-2}(X) \rc$,
the injection
$$  X \sqcup \lc\frakM_{ n-2}(X) \rc \hookrightarrow
    X \sqcup\lc \frakM_{ n-1}(X) \rc $$
induces a monoid embedding
\begin{equation*}
    \frakM_{ n-1}(X) = M \big( X\sqcup \lc\frakM_{n-2}(X) \rc \big)
 \hookrightarrow
    \frakM_{ n}(X) = M\big( X\sqcup\lc\frakM_{n-1}(X) \rc \big).
\end{equation*}
Finally we define the monoid
$$ \frakM (X):=\dirlim \frakM_{ n}(X) = \bigcup_{n\geq 0} \frakM_{ n}(X).$$
The elements in $ \frakM(X)$ are called \emph{ bracketed words} or
\emph{ bracketed monomials on $X$}. When $X$ is finite, we may also just list its elements, as in $\frakM(x_1,x_2)$ if $X = \{x_1, x_2\}$.
For any $u\in \frakM(X)\setminus \{1\}$, $u$ can be written uniquely as a product: \begin{equation}
u = u_1\cdots u_n, \text{ for some } n\geq 1,\, u_i\in X\sqcup \lc\frakM(X)\rc,\, 1\leq i\leq n.
\mlabel{eq:udecom}
 \end{equation}
The \emph{breadth} of
$u$, denoted by $|u|$, is defined to be $n$. If $u=1$, define $|u|=0$.

Let $\bfk\frakM(X)$ be the free
module with the basis $\frakM(X)$. Using \bfk-linearity, the concatenation product on $\frakM(X)$
can be extended to a multiplication on $\bfk\frakM(X)$, turning $\bfk\frakM(X)$ into a \bfk-algebra.
Define an operator $\lc\,\rc: \frakM(X) \to \frakM(X)$ by assigning
$$ u \mapsto \lc u\rc,\, u\in \frakM(X).$$
By \bfk-linearly, the operator $\lc\,\rc: \frakM(X) \to \frakM(X)$ can be extended to a linear operator $\lc\,\rc: \bfk \frakM(X) \to \bfk \frakM(X)$, turning $(\bfk\frakM(X),\lc\,\rc) $ into an operated \bfk-algebra. The elements in $ \bfk\frakM(X)$ are called
\emph{ bracketed polynomials} or \emph{operated polynomials} on $X$.

\begin{lemma}{\bf \cite[Coro.~3.6, 3.7]{Gop}}
With structures as above,
\begin{enumerate}
\item
the $(\frakM(X),\lc\, \rc)$ together with the natural embedding $i:X \to \frakM(X)$ is the free operated monoid on $X$; and
\label{it:mapsetm}
\item
the $(\bfk\mapm{X},\lc\,\rc)$ together with the natural embedding $i: X \to \bfk\mapm{X}$ is the free operated $\bfk$-algebra on $X$. \label{it:mapalgsg}
\end{enumerate}
\label{pp:freetm}
\end{lemma}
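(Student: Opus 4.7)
The plan is to build the universal extension stage by stage along the inductive construction of $\frakM(X)$, and then pass to the direct limit. Concretely, for part (a), given any operated monoid $(U,P_U)$ and any set map $f\colon X\to U$, I need to produce a unique operated monoid homomorphism $\bar f\colon \frakM(X)\to U$ with $\bar f\circ i = f$ and $\bar f \circ \lc\,\rc = P_U\circ \bar f$.

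I would define a compatible family $f_n\colon\frakM_n(X)\to U$ by induction on $n\geq 0$. For $n=0$, since $\frakM_0(X)=M(X)$ is the free monoid on $X$, the map $f$ extends uniquely to a monoid homomorphism $f_0\colon M(X)\to U$. Assuming $f_{n-1}$ has been constructed, I define a set map
\[
g_n\colon X\sqcup\lc\frakM_{n-1}(X)\rc \longrightarrow U,\qquad g_n(x)=f(x),\quad g_n(\lc u\rc)=P_U(f_{n-1}(u)),
\]
and then use the universal property of the free monoid $\frakM_n(X)=M\bigl(X\sqcup\lc\frakM_{n-1}(X)\rc\bigr)$ to extend $g_n$ uniquely to a monoid homomorphism $f_n\colon \frakM_n(X)\to U$. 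The key compatibility check is that $f_n\circ i_{n-1,n}=f_{n-1}$; this follows because both monoid homomorphisms $\frakM_{n-1}(X)\to U$ agree on the generating set $X\sqcup\lc\frakM_{n-2}(X)\rc$ (using the inductive definition of $g_{n-1}$), and hence coincide by freeness at stage $n-1$. Taking the direct limit yields a monoid homomorphism $\bar f\colon\frakM(X)\to U$. Compatibility with the operator is straightforward: for $u\in\frakM_{n-1}(X)$, we have $\bar f(\lc u\rc)=f_n(\lc u\rc)=g_n(\lc u\rc)=P_U(f_{n-1}(u))=P_U(\bar f(u))$, so $\bar f\circ\lc\,\rc = P_U\circ \bar f$. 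Uniqueness: any operated monoid homomorphism $h$ with $h\circ i=f$ must agree with $\bar f$ on $X$, and then on $\lc\frakM_{n-1}(X)\rc$ by the intertwining relation, and hence on all of $\frakM_n(X)$ by induction and the freeness of $\frakM_n(X)$ as a monoid on $X\sqcup\lc\frakM_{n-1}(X)\rc$.

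For part (b), the strategy is to reduce to part (a). Given an operated $\bfk$-algebra $(V,P_V)$ and a set map $f\colon X\to V$, part (a) gives a unique operated monoid homomorphism $\bar f\colon \frakM(X)\to (V,P_V)$, where $V$ is viewed as an operated monoid under multiplication. Since $\bfk\frakM(X)$ is the free $\bfk$-module on the basis $\frakM(X)$, the map $\bar f$ extends by $\bfk$-linearity to a $\bfk$-linear map $\tilde f\colon \bfk\frakM(X)\to V$. One then checks that $\tilde f$ is multiplicative (this follows from multiplicativity of $\bar f$ on basis elements and bilinearity of the product) and commutes with $\lc\,\rc$ (again verified on basis elements and extended by linearity). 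Uniqueness follows from the uniqueness clause of part (a) combined with $\bfk$-linearity.

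The only subtle point worth flagging is the well-definedness of the inductive limit in part (a): one must verify that the embeddings $i_{n-1,n}\colon\frakM_{n-1}(X)\hookrightarrow\frakM_n(X)$ are genuinely injective and compatible, so that $\frakM(X)=\bigcup_{n\geq 0}\frakM_n(X)$ is unambiguous and every bracketed word lies in some $\frakM_n(X)$. This is where the decomposition in Eq.~(\mref{eq:udecom}) and a depth argument make the bookkeeping transparent. Everything else is a routine application of universal properties, and the whole proof is essentially categorical; I would not expect any genuine obstacle beyond writing out the inductive compatibility of the $f_n$.
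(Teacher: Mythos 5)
The paper gives no proof of this lemma at all---it is quoted verbatim from \cite[Coro.~3.6, 3.7]{Gop}---so there is no internal argument to compare against. Your proposal is correct and is essentially the standard proof from that reference: extend the set map stage by stage through the filtration $\frakM_n(X)$ using the freeness of each $M\bigl(X\sqcup\lc\frakM_{n-1}(X)\rc\bigr)$, verify the compatibility $f_n\circ i_{n-1,n}=f_{n-1}$ on generators, pass to the direct limit, and reduce the algebra case to the monoid case by $\bfk$-linearization (which is legitimate here since an operated monoid only requires a set map $P_U$, with no compatibility between $P_U$ and the multiplication).
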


\begin{defn} Let $(R, P)$ be an operated \bfk-algebra.
\begin{enumerate}
\item An element $\phi(x_1,\ldots,x_k)\in \bfk\mapm{X}$ (or $\phi(x_1,\ldots,x_k)=0$) is called an \emph{ operated polynomial identity} (OPI), where $k\geq 1$ and $x_1, \ldots, x_k\in X$.

\item Let $\phi= \phi(x_1,\ldots,x_k)\in \bfk\mapm{X}$ be an OPI. Given any $u_1, \ldots, u_k\in R$, there is a set map $f: x_i \mapsto u_i, 1\leq i\leq k$ and we define
\begin{equation*}
\phi(u_1,\ldots,u_k) := \free{f}(\phi(x_1,\ldots,x_k)),
\label{eq:phibar}
\end{equation*}
where $\free{f}:\bfk\frakM(x_1,\ldots,x_k) \to R$ is the unique morphism of operated algebras that extends the set map $f$, using the universal property of $\bfk\frakM(x_1, \ldots, x_k)$ as the free operated \bfk-algebra on $\{x_1, \ldots, x_k\}$.
Informally, $\phi(u_1,\ldots,u_k)$ is the element of $R$ obtained from $\phi(x_1, \ldots, x_k)$ by replacing each $x_i$ by $u_i$, $1\leq i\leq k$.

\item Let $\Phi\subseteq \bfk\frakM(X)$ be a set of OPIs. We call \emph{$\Phi$ is satisfied} by $R$ if
$$\phi(u_1,\ldots,u_k)=0, \, \forall \phi(x_1,\ldots,x_k)\in \Phi,\, \forall u_1,\ldots,u_k\in R.$$
In this case, we speak that $R$ is a \emph{ $\Phi$-algebra} and $P$ is a \emph{$\Phi$-operator}.

\item Let $S\subseteq \bfk\frakM(X)$ be a set. The \emph{operated ideal} $\Id(S)$ of $\bfk\frakM(X)$ generated by $S$ is the smallest operated ideal containing $S$.
\end{enumerate}
\end{defn}

Let us proceed some examples.

\begin{exam}
The differential operator as an algebraic abstraction of derivation in analysis leads to the differential algebra, which is an algebraic study of differential equations and has been largely successful in many important areas \mcite{Kol, PS, Ri}. The differential operator $d = \lc \,\rc$ fulfils the following OPI
$$\phi(x_1,x_2) = \lc x_1 x_2\rc - \lc x_1\rc x_2 - x_1\lc x_2\rc.$$
%From \cite[Thm.~5.7]{GSZ}, $S_\phi(X)$ is a Gr\"{o}bner Shirshov basis of $\Id(S_\phi(X))$.
\end{exam}

\begin{exam}
The Rota-Baxter operator $P = \lc \,\rc$ of weight $\lambda$ has played important role in mathematics and physics\mcite{Ba, Gub, Ro1}, satisfying the OPI
$$\phi(x_1,x_2) = \lc x_1\rc \lc x_2\rc - \lc x_1\lc x_2\rc\rc - \lc \lc x_1\rc x_2\rc -\lambda \lc x_1x_2\rc,$$
where $\lambda\in \bfk$ is a fixed constant.
%It was shown \cite[Coro.~3.13,\,Thm.~4.9]{GGSZ} that $S_\phi(X)$ is a Gr\"{o}bner Shirshov basis of $\Id(S_\phi(X))$.
\end{exam}

\begin{exam}
From Definition~\mref{defn:defna}, the averaging operator $A = \lc \,\rc$ (noncommutative) is defined by the OPIs
\begin{equation}
\begin{aligned}
\phi(x_1, x_2) &= \lc x_1\rc\lc x_2\rc - \lc \lc x_1\rc x_2\rc,\\
\psi(x_1, x_2) &= \lc x_1\lc x_2\rc\rc - \lc \lc x_1\rc x_2\rc.
\end{aligned}
\mlabel{eq:daver}
\end{equation}
%From~\mcite{GP}, $S_\phi(X)\cup S_\psi(X)$ is not a Gr\"{o}bner Shirshov basis in $\bfk \frakM(X)$.
\mlabel{ex:avera}
\end{exam}

\begin{exam}
O. Reynolds~\cite{Re} introduced the concept of Reynolds operators into fluid dynamics, and Kamp\'{e} de F\'{e}riet~\cite{Fe} named it in his study on the various spaces of functions. The Reynolds operator is defined by the OPI
$$\phi(x_1,x_2) =  \lc \lc x_1\rc \lc x_2\rc \rc + \lc x_1\rc \lc x_2\rc - \lc x_1 \lc x_2\rc\rc - \lc \lc x_1\rc x_2\rc.$$
\end{exam}
\begin{defn}
\begin{enumerate}
\item Let $\phi =\phi(x_1,\ldots,x_k)\in \bfk\frakM(X) $ be an OPI with $k\geq 1$. Define
\begin{equation}
 S_\phi(X):= \{\,{\phi}(u_1,\ldots,u_k) \mid u_1,\ldots,u_k\in \frakM(X)\,\}.
\mlabel{eq:genphi}
\end{equation}

\item Let $\Phi$ be a set of OPIs. Define
\begin{equation}
 S_\Phi(X):= \bigcup_{\phi\in \Phi} S_\phi(X).
\mlabel{eq:genPhi}
\end{equation}
\end{enumerate}
\end{defn}

It is well-known that

\begin{prop} {\rm \cite[Prop.~1.3.6]{Coh}} Let $\Phi\subseteq \bfk \frakM(X)$ a set of OPIs. Then the quotient operated algebra $\bfk\mapm{X}/\Id(S_\Phi(X))$ is the free $\Phi$-algebra on $X$.
\label{pp:frpio}
\end{prop}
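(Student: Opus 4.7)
The plan is to verify the universal property directly, using that $\bfk\frakM(X)$ is already the free operated algebra on $X$ (Lemma~\ref{pp:freetm}~(\ref{it:mapalgsg})). Let $\pi : \bfk\frakM(X) \to \bfk\frakM(X)/\Id(S_\Phi(X))$ denote the quotient projection and write $\iota := \pi \circ i : X \to \bfk\frakM(X)/\Id(S_\Phi(X))$, where $i : X \hookrightarrow \bfk\frakM(X)$ is the natural embedding. There are two things to check: first, that $\bfk\frakM(X)/\Id(S_\Phi(X))$ is itself a $\Phi$-algebra, and second, that $(\bfk\frakM(X)/\Id(S_\Phi(X)), \iota)$ satisfies the universal property among $\Phi$-algebras.

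For the first point, let $\phi(x_1,\ldots,x_k)\in \Phi$ and let $\bar u_1, \ldots, \bar u_k \in \bfk\frakM(X)/\Id(S_\Phi(X))$ be arbitrary. Lift each $\bar u_j$ to some $u_j \in \bfk\frakM(X)$, decompose $u_j = \sum_t c_{j,t} w_{j,t}$ with $w_{j,t} \in \frakM(X)$ and $c_{j,t} \in \bfk$, and expand $\phi(u_1,\ldots,u_k)$ by multilinearity of the operated-algebra morphism $\tilde{f}$ in the definition of $\phi(u_1,\ldots,u_k)$. Each resulting summand is of the form $\phi(w_{1,t_1}, \ldots, w_{k,t_k})$ with $w_{j,t_j}\in \frakM(X)$, hence lies in $S_\phi(X) \subseteq S_\Phi(X) \subseteq \Id(S_\Phi(X))$. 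Therefore $\phi(u_1,\ldots,u_k)\in \Id(S_\Phi(X))$, which gives $\phi(\bar u_1,\ldots,\bar u_k)=0$ in the quotient, so $\Phi$ is satisfied.

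For the universal property, let $(R,P)$ be any $\Phi$-algebra and $g : X \to R$ any set map. By Lemma~\ref{pp:freetm}~(\ref{it:mapalgsg}) there is a unique operated algebra morphism $\tilde{g} : \bfk\frakM(X) \to R$ with $\tilde{g}\circ i = g$. I claim $\Id(S_\Phi(X)) \subseteq \ker \tilde{g}$. Indeed, $\ker \tilde{g}$ is an operated ideal of $\bfk\frakM(X)$ (it is an ideal, and it is stable under $\lc\,\rc$ because $\tilde{g}$ commutes with the operators), so it suffices to show $S_\Phi(X)\subseteq \ker \tilde{g}$. A generator $\phi(u_1,\ldots,u_k)\in S_\Phi(X)$ with $\phi\in\Phi$ and $u_j\in \frakM(X)$ is, by construction via the universal map $\tilde{f}$ with $f(x_i)=u_i$, sent under $\tilde{g}$ to $\phi(\tilde g(u_1),\ldots,\tilde g(u_k))$, which vanishes because $R$ is a $\Phi$-algebra. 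Thus $\tilde{g}$ factors uniquely through $\pi$ as an operated algebra morphism $\bar{g} : \bfk\frakM(X)/\Id(S_\Phi(X)) \to R$ with $\bar{g}\circ \iota = g$. Uniqueness of $\bar{g}$ follows because $\iota(X)$ generates the quotient as an operated algebra (since $i(X)$ generates $\bfk\frakM(X)$), so any two such extensions agreeing on $\iota(X)$ must coincide.

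The one subtle step is verifying that $\ker \tilde{g}$ is an \emph{operated} ideal (closed under $\lc\,\rc$) and not merely an ordinary ideal; this is immediate from $\tilde{g}(\lc u\rc)= P(\tilde g(u))$ together with $P(0)=0$ by linearity of $P$. Apart from this, everything reduces to bookkeeping around the universal property of the free operated algebra and the multilinear expansion of an OPI evaluated at a sum.
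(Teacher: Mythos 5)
The paper offers no proof of this proposition at all---it is simply quoted from Cohn---so there is no in-text argument to measure yours against; what you have written is the standard universal-property verification, and its second half (that $\ker\tilde{g}$ is an operated ideal, hence contains $\Id(S_\Phi(X))$ once it contains $S_\Phi(X)$, the factorization through $\pi$, and uniqueness via generation of the quotient by $\iota(X)$) is correct as it stands.

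The one step that does not go through at the stated level of generality is your verification that the quotient is a $\Phi$-algebra. You expand $\phi(u_1,\ldots,u_k)$ ``by multilinearity'' and assert that every resulting summand has the form $\phi(w_{1,t_1},\ldots,w_{k,t_k})$ with the $w_{j,t_j}$ monomials. That identity holds only when each variable $x_j$ occurs exactly once in every monomial of $\phi$, i.e.\ when $\phi$ is multilinear: the substitution morphism $\tilde{f}$ is $\bfk$-linear on $\bfk\frakM(x_1,\ldots,x_k)$, but it is not linear as a function of the arguments $u_j$. Since the paper's $S_\phi(X)$ in Eq.~(\ref{eq:genphi}) consists only of \emph{monomial} substitution instances, a non-multilinear OPI such as $\phi(x_1)=\lc x_1\rc - x_1^2$ gives $\phi(a+b)=\phi(a)+\phi(b)-ab-ba$ for monomials $a,b$, and the cross terms need not lie in $\Id(S_\phi(X))$, so the quotient need not satisfy $\Phi$. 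The gap is harmless in the context of this paper, because every OPI actually used---$\phi$, $\psi$ of Eq.~(\ref{eq:daver}), $\varphi$ of Eq.~(\ref{eq:vPhi}), and the differential, Rota--Baxter and Reynolds examples---is multilinear; but to make your argument airtight you should either add multilinearity of the members of $\Phi$ as a hypothesis, or replace $S_\Phi(X)$ by the set of all substitution instances with $u_j$ ranging over $\bfk\frakM(X)$ (which is what the general universal-algebra statement in Cohn amounts to).
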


In particular, we have

\begin{prop} Let $\phi(x_1, x_2)$, $\psi(x_1, x_2)$ defined in Eq.~(\mref{eq:daver}). Then the quotient operated algebra $\bfk\mapm{X}/\Id(S_\phi(X)\cup S_\psi(X))$ is the free averaging algebra on $X$.
\mlabel{pp:freea}
\end{prop}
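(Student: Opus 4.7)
The plan is to deduce this as a direct specialization of Proposition~\ref{pp:frpio}. The key observation is that the two defining identities for an averaging operator in Definition~\ref{defn:defna}, namely
\[
A(u_1)A(u_2) = A(A(u_1)u_2) \quad \text{and} \quad A(u_1 A(u_2)) = A(A(u_1)u_2),
\]
are precisely the two OPIs $\phi(x_1,x_2) = \lc x_1\rc\lc x_2\rc - \lc \lc x_1\rc x_2\rc$ and $\psi(x_1,x_2) = \lc x_1\lc x_2\rc\rc - \lc \lc x_1\rc x_2\rc$ recorded in Eq.~(\ref{eq:daver}).

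First I would set $\Phi := \{\phi(x_1,x_2),\, \psi(x_1,x_2)\}$ and verify that an operated \bfk-algebra $(R,P)$ is an averaging algebra in the sense of Definition~\ref{defn:defna} if and only if $\Phi$ is satisfied by $R$ in the sense of the previous definition. One direction is immediate by unwinding the definition of $\phi(u_1,u_2)$ and $\psi(u_1,u_2)$ via the map $\bar f$; the converse direction is the same computation read backwards. This identification gives that the class of $\Phi$-algebras coincides with the class of averaging algebras.

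Next, by definition $S_\Phi(X) = S_\phi(X) \cup S_\psi(X)$. Proposition~\ref{pp:frpio} then yields that
\[
\bfk\mapm{X}/\Id(S_\Phi(X)) \;=\; \bfk\mapm{X}/\Id(S_\phi(X)\cup S_\psi(X))
\]
is the free $\Phi$-algebra on $X$, which by the previous step is the free averaging algebra on $X$.

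There is essentially no obstacle: once one recognizes $\phi$ and $\psi$ as the defining OPIs of the averaging operator, the proposition is a one-line corollary of Proposition~\ref{pp:frpio}. The only subtle point worth spelling out is the verification that $R$ satisfies $\Phi$ in the OPI sense (evaluating via $\bar f$ for every assignment of $x_1,x_2$ in $R$) really does coincide with the universally quantified identities in Definition~\ref{defn:defna}; this is a routine application of the universal property of $\bfk\frakM(x_1,x_2)$ as the free operated \bfk-algebra.
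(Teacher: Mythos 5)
Your proposal is correct and is exactly the argument the paper intends: the proposition is stated as an immediate specialization of Proposition~\ref{pp:frpio} to the set $\Phi=\{\phi(x_1,x_2),\psi(x_1,x_2)\}$ of OPIs defining the averaging operator, using $S_\Phi(X)=S_\phi(X)\cup S_\psi(X)$. The only point you spell out more explicitly than the paper is the (routine) identification of $\Phi$-algebras with averaging algebras via the universal property of $\bfk\frakM(x_1,x_2)$, which is harmless and arguably a useful clarification.
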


\subsection{Gr\"{o}bner Shirshov bases} \mlabel{ssec:gsbasis}
%We provide some background before stating the main theorem on  Gr\"obner-Shirshov bases for \good operators.
In this subsection, we provide some backgrounds on Gr\"obner-Shirshov bases~\cite{BCQ, GGZ, GSZ}.

\begin{defn}
Let $\star$ be a symbol not in $X$ and $X^\star = X \sqcup \{\star\}$.
\begin{enumerate}
\item By a \emph{ $\star$-bracketed word} on $X$, we mean any bracketed word in $\mapm{X^\star}$ with exactly one occurrence of $\star$, counting multiplicities. The set of all $\star$-bracketed words on $X$ is denoted by $\frakM^{\star}(X)$.
\item For $q\in \mstar$ and $u \in  \frakM({X})$, we define $\stars{q}{u}:= q|_{\star \mapsto u}$ to be the bracketed word on $X$ obtained by replacing the symbol $\star$ in $q$ by $u$.

\item For $q\in \mstar$ and $s=\sum_i c_i u_i\in \bfk\frakM{(X)}$, where $c_i\in\bfk$ and $u_i\in \frakM{(X)}$, we define
\begin{equation*}
 q|_{s}:=\sum_i c_i q|_{u_i}\,. \vspace{-5pt}
\end{equation*}

\item A bracketed word $u \in \frakM(X)$ is a \emph{ subword} of another
bracketed word $w \in \frakM(X)$ if $w  = q|_u$ for some $q \in
\frakM^\star(X)$. \mlabel{it:subword}
%Denote by $\deg_u(w)$ the number of occurrence of $u$ in $w$.
\end{enumerate}
Generally, with $\star_1,\star_2$ distinct symbols not in
$X$, set $X^{\star 2} := X\sqcup \{\star_1, \star_2\}$.
\begin{enumerate}
\setcounter{enumi}{4}
\item We define an \emph{$(\star_1,
    \star_2)$-bracketed word on $X$} to be a bracketed word in
  $\frakM(X^{\star 2})$ with exactly one occurrence of each of
  $\star_i$, $i=1,2$. The set of all
  $(\star_1, \star_2)$-bracketed words on $X$
  is denoted by $\frakM^{\star_1,\star_2}(X)$.

\item For $q\in \frakM^{\star_1,\star_2}(X)$ and $u_1,u_2 \in
  \bfk \frakM^{\star_1,\star_2}(X)$, we define
$$\stars{q}{u_1, u_2} := \stars{q}{\star_1 \mapsto u_1, \star_2 \mapsto u_2}$$
to be obtained by replacing the letters $\star_i$ in $q$ by $u_i$ for
$i=1,2$.
\end{enumerate}
%%
%Generally, with $\star_1,\star_2$ distinct symbols not in $X$, set  $X^{\star_1, \star_2}: = X \cup \{\star_1, \star_2\}$.
%%
%\begin{enumerate}\setcounter{enumi}{4}
%\item We define a \emph{ $(\star_1, \star_2)$-bracketed word on $X$} to be a bracketed word in $\mapm{X^{\star_1, \star_2}}$ with exactly one occurrence of $\star_1$ and one occurrence of $\star_2$. The set of $(\star_1, \star_2)$-bracketed words on $X$ is denoted by $\frakM^{\star_1, \star_2}(X)$.
%
%\item For $q\in \frakM^{\star_1, \star_2}(X)$ and $u_1, u_2\in \bfk\mapm{X}$, we define
%\vspace{-4pt}
%\begin{equation*}
%q|_{u_1, u_2}= p|_{\star_1 \mapsto u_1, \star_2 \mapsto u_2},
%%
%\label{eq:2star1}
%\end{equation*}
%to be the bracketed word in $\frakM(X)$ obtained by replacing the symbols $\star_1$ in $p$ by $u_1$ and $\star_2$ in $p$ by $u_2$.
%
%\end{enumerate}
 \mlabel{def:starbw}
\end{defn}

\begin{remark}
Recall~\mcite{GSZ} that $\stars{q}{u_1,u_2} = \stars{(\stars{q^{\star_1}}{u_1})}{u_2} = \stars{(\stars{q^{\star_2}}{u_2})}{u_1}$, where $q^{\star_1}$ is viewed as a
$\star_1$-bracketed word on $X\sqcup \{\star_2\}$ and  $q^{\star_2}$ as a
$\star_2$-bracketed word on $X\sqcup \{\star_1\}$.
\end{remark}

We record the following obvious properties of subwords, which will be used later.

\begin{lemma}
Let $u,v,w\in \frakM(X)$.
\begin{enumerate}
\item If $u$ is a subword of $\lc v\rc$, then either $u = \lc v\rc$ or $u$ is a subword of $v$. \mlabel{it:qubra}

\item If $\lc u\rc$ is a subword of $vw$, then either $\lc u\rc$ is a subword of $v$ or $\lc u\rc$ is a subword of $w$.
    \mlabel{it:youbr}
\end{enumerate}
\mlabel{lem:qubra}
\end{lemma}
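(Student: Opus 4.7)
The plan is to unpack Definition~\ref{def:starbw}\,(\ref{it:subword}) and exploit the unique breadth-decomposition of bracketed words coming from the free monoid structure of $\frakM(X) = M(X \sqcup \lc \frakM(X)\rc)$. Concretely, for any nonempty bracketed word we have a canonical factorisation $q = q_1\cdots q_n$ with each $q_i \in X \sqcup \lc \frakM(X)\rc$ (allowing the alphabet to include $\star$ for $\star$-bracketed words), and if $q \in \mstar$ then exactly one $q_j$ contains $\star$.

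For part (\ref{it:qubra}), write $\lc v\rc = q|_u$ for some $q \in \mstar$ and decompose $q = q_1 \cdots q_n$. Substituting $u$ for $\star$ leaves every $q_i$ with $i \neq j$ unchanged and replaces $q_j$ by either $u$ itself (if $q_j = \star$) or by a bracketed letter $\lc q_j'|_u\rc$ (if $q_j = \lc q_j'\rc$). Since $\lc v\rc$ has breadth $1$ in $\frakM(X)$ and the breadth-decomposition is unique, the only possibilities are $n = 1$ with $q = \star$, forcing $u = \lc v\rc$; or $n = 1$ with $q = \lc q_1'\rc$ for some $q_1' \in \mathfrak{M}^\star(X)$, in which case $\lc v\rc = \lc q_1'|_u\rc$ yields $v = q_1'|_u$, so $u$ is a subword of $v$.

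For part (\ref{it:youbr}), write $vw = q|_{\lc u\rc}$, and take breadth-decompositions $v = v_1\cdots v_m$, $w = w_1\cdots w_r$, and $q = q_1\cdots q_n$. Substituting $\lc u\rc$ for $\star$ replaces $q_j$ by a single breadth-$1$ letter (either $\lc u\rc$ itself, or $\lc q_j'|_{\lc u\rc}\rc$), while leaving the other $q_i$ unchanged. Hence $q|_{\lc u\rc}$ has a breadth decomposition of length $n$, and the uniqueness of such decompositions forces $n = m + r$ together with $v_i = q_i|_{\lc u\rc}$ for $1 \le i \le m$ and $w_{i-m} = q_i|_{\lc u\rc}$ for $m < i \le n$. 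If $j \le m$, set $q' := q_1 \cdots q_m \in \mathfrak{M}^\star(X)$ (it has exactly one $\star$, inside $q_j$); then $q'|_{\lc u\rc} = v$, so $\lc u\rc$ is a subword of $v$. If instead $j > m$, the symmetric argument with $q'' := q_{m+1}\cdots q_n$ shows that $\lc u\rc$ is a subword of $w$.

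There is no real obstacle here: both parts are direct consequences of the free-monoid structure on $X \sqcup \lc \frakM(X)\rc$. The only thing to be careful about is bookkeeping the two ways in which the $\star$-containing factor $q_j$ can look ($q_j = \star$ versus $q_j \in \lc \frakM(X^\star)\rc$), and observing that in either case substitution of $\lc u\rc$ leaves the length of the breadth-decomposition fixed, which is what makes the uniqueness argument go through.
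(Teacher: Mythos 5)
Your proof is correct and follows essentially the same route as the paper's: both parts rest on the unique factorisation of bracketed words in the free monoid $M(X\sqcup\lc\frakM(X)\rc)$ together with the fact that $\lc\,\cdot\,\rc$ produces a single breadth-one letter, so that in part~(\ref{it:qubra}) the context $q$ is forced to be $\star$ or $\lc p\rc$, and in part~(\ref{it:youbr}) the $\star$-bearing factor of $q$ lands entirely inside $v$ or inside $w$. The paper merely states these two steps more tersely (its proof of~(\ref{it:youbr}) is the one-line remark that $\lc u\rc$ has breadth $1$), and your write-up is just a fuller bookkeeping of the same argument.
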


\begin{proof}
(\mref{it:qubra}) Suppose $u\neq \lc v\rc$. Since $u$ is a subword of $\lc v\rc$, then $\lc v\rc = \stars{q}{u}$ for some $q\in \mstar$ by Definition~\mref{def:starbw}~(\mref{it:subword}).
Since $u\neq \lc v\rc$, it follows that $q\neq \star$. Thus $q = \lc p\rc$ for some $p\in \mstar$ by $\lc v\rc =\stars{q}{u}$. Therefore $\lc v\rc =\stars{q}{u} = \lc \stars{p}{u}\rc$ and so $v = \stars{p}{u}$, as required.

(\mref{it:youbr}). This is followed by the breadth of $\lc u\rc$ is 1.
\end{proof}

The operated ideals in $\bfk\mapm{X}$ can be characterized by $\star$-bracketed words~\cite{BCQ,GSZ}.

\begin{lemma}{\rm (\cite[Lem. 3.2]{GSZ})}
Let $S \subseteq \bfk\mapm{X}$. Then
\begin{equation}\hspace{10pt}\Id(S) = \left\{\, \sum_{i=1}^n c_i q_i|_{s_i} \medmid n\geq 1
{\rm\ and\ } c_i\in \nbfk,
q_i\in \frakM^{\star}(X), s_i\in S {\rm\ for\ } 1\leq i\leq n\,\right\}.
\label{eq:repgen}
\end{equation}
\mlabel{lem:opideal}
\end{lemma}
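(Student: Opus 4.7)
The plan is to prove the two inclusions separately, writing $J$ for the set on the right-hand side of Eq.~(\ref{eq:repgen}). The key observation that drives both directions is that for any $q\in \mstar$, the substitution operation $s \mapsto q|_s$ commutes in a suitable way with multiplication and with $\lc\,\rc$: namely, $u \cdot (q|_s) \cdot v = (u q v)|_s$ whenever $u,v\in \frakM(X)$, and $\lc q|_s\rc = \lc q\rc|_s$, where $uqv$ and $\lc q\rc$ are again $\star$-bracketed words.

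For $J \subseteq \Id(S)$, I would show that $q|_s \in \Id(S)$ for every $q \in \mstar$ and $s \in S$, by induction on the depth of $q$ (equivalently, on the stage $\frakM_n(X^\star)$ in which $q$ first appears). The base case $q = \star$ gives $q|_s = s \in S \subseteq \Id(S)$. For the inductive step, write $q = q_1 \cdots q_n$ with each $q_i \in X\sqcup\{\star\}\sqcup\lc\frakM(X^\star)\rc$ as in Eq.~(\ref{eq:udecom}); exactly one factor $q_j$ contains $\star$, and either $q_j = \star$, or $q_j = \lc q'\rc$ with $q' \in \frakM^\star(X)$ of smaller depth. In the latter case $q|_s = q_1\cdots q_{j-1}\lc q'|_s\rc q_{j+1}\cdots q_n$ with $q'|_s \in \Id(S)$ by induction, and since operated ideals are closed under $\lc\,\rc$ and under two-sided multiplication, $q|_s \in \Id(S)$. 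The former case is handled directly by two-sided multiplication. Extending by $\bfk$-linearity, every element of $J$ lies in $\Id(S)$.

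For $\Id(S) \subseteq J$, I would verify that $J$ is itself an operated ideal containing $S$; then minimality of $\Id(S)$ finishes. Containment $S \subseteq J$ is immediate by taking $q = \star$. That $J$ is a $\bfk$-submodule is clear from its definition as a set of $\bfk$-linear combinations (absorbing scalars into the $c_i$ and discarding zero coefficients). For closure under $\lc\,\rc$, any $f = \sum_i c_i q_i|_{s_i} \in J$ gives $\lc f\rc = \sum_i c_i \lc q_i\rc|_{s_i}$, and each $\lc q_i\rc$ is still a $\star$-bracketed word. For closure under left and right multiplication by an arbitrary bracketed monomial $u \in \frakM(X)$, we have $u \cdot f = \sum_i c_i (u q_i)|_{s_i}$ and $f \cdot u = \sum_i c_i (q_i u)|_{s_i}$, where $uq_i, q_i u \in \frakM^\star(X)$; extending by $\bfk$-linearity covers all of $\bfk\frakM(X)$.

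The argument is essentially bookkeeping; the only place requiring care is the inductive decomposition of $q \in \mstar$ in the first inclusion, making sure one correctly identifies the unique factor carrying the symbol $\star$ so that the substitution can be pushed inward through the operator $\lc\,\rc$ and through the concatenation product. No serious obstacle is anticipated.
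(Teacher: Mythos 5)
Your argument is correct and complete: the inclusion $J\subseteq\Id(S)$ by induction on the depth of the $\star$ in $q$, and the reverse inclusion by checking that the right-hand side is itself an operated ideal containing $S$, is exactly the standard proof of this fact. The paper itself gives no proof here --- it quotes the lemma from \cite[Lem.~3.2]{GSZ} --- so there is nothing to compare against, and your bookkeeping (pushing the substitution $q|_s$ through concatenation and through $\lc\,\rc$, using that $\bfk$ is a domain so the coefficients $c_id_j$ stay nonzero) is sound.
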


\begin{defn} A \emph{ monomial order on $\frakM(X)$} is a
well-order $\leq$ on $\frakM(X)$ such that
\begin{equation*}
u < v \Longrightarrow q|_u < q|_v, \quad \forall u, v \in \frakM(X), \forall q \in \frakM^{\star}(X).
%
%\label{eq:morder}
\end{equation*}
%Denote $u < v$ if $u\leq v$ but $u\neq v$.
%Since $\leq$ is a well-order, it follows from Eq.\,(\ref{eq:morder}) that $1 \leq u$ and $u < \lc u \rc$ for all $u \in \frakM(X)$.
%
\mlabel{de:morder}
\end{defn}

\begin{defn}
Let $s \in \bfk\mapm{X}$ and $\leq$ a linear order on $\frakM(X)$.
\begin{enumerate}
\item Let $s \notin \bfk$. The \emph{leading monomial} of $s$, denoted by $\bar{s}$, is the largest monomial appearing in $s$. The \emph{ leading coefficient of $s$}, denoted by $c_s$, is the coefficient of $\bar{s}$ in $s$.

%We say a subset $S \subset \bfk\mapm{X}$ is \emph{ monic with respect to $\leq$} if every $s \in S$ is monic with respect to $\leq$.
\label{item:monic}

\item If $s \in \bfk$, we define the \emph{ leading monomial of $s$} to be $1$ and the \emph{ leading coefficient of $s$} to be $c_s= s$.\label{item:scalar}

\item $s$ is called \emph{ monic  with respect to $\leq$} if $s\notin \bfk $ and $c_s=1$. A subset $S \subseteq \bfk\mapm{X}$ is called \emph{ monic with respect to $\leq$} if every $s \in S$ is monic with respect to $\leq$.

\item Define $\re{s}:= c_s\bar{s} -s $. So $s=c_s \bar{s}-\re{s}$.
\mlabel{it:res}
\end{enumerate}
\mlabel{def:irrS}
\end{defn}

We will not need the precise definition of Gr\"{o}bner-Shirshov bases for our construction. So we will not recall it for now and the authors are refereed to~\mcite{BC} and references therein.
Suffices it to say that we need the Composition-Diamond Lemma, the corner stone of Gr\"{o}bner-Shirshov basis theories.

\begin{lemma} {\rm (Composition-Diamond Lemma \cite{BCQ,GSZ})}
Let $\leq$ a monomial order on $\mapm{X}$ and $S \subseteq \bfk\mapm{X}$ monic with respect to $\leq$.
Then the following conditions are equivalent.
\begin{enumerate}
\item $S $ is a Gr\"{o}bner-Shirshov basis in $\bfk\mapm{X}$.
\label{it:cd1}

%\item For every non-zero $f \in \Id(S)$, $\bar{f}=q|_{\overline{s}}$
%for some $q \in \frakM^\star(X)$ and some $s\in S$. \label{it:cd2}
%
%\item  For every non-zero $f \in \Id(S)$, $f$ can be expressed in \emph{ triangular form}:
%%\vspace{-3pt}
%\begin{equation*}
%f= c_1\stars{q_1}{s_1}+ c_2\stars{q_2}{s_2}+\cdots+ c_nq_n|_{s_n}, \label{eq:fexp1}
%\end{equation*}
%where $c_i\in \bfk$ \paren{$c_i \neq 0$}, $s_i\in S$, $q_i\in \frakM^{\star}(X)$ for $1 \leq i \leq n$, and
%%\vspace{-3pt}
%$$ \stars{q_1}{\overline{s_1}}> \stars{q_2}{\overline{s_2}}
%> \cdots> q_n|_{\overline{s_n}}.$$ \label{it:cd3}

\item $\eta(\Irr(S))$ is a $\bfk$-basis of $\bfk\mapm{X}/\Id(S)$, where
$\eta\!: \bfk\mapm{X} \to \bfk\mapm{X}/\Id(S)$ is the canonical homomorphism of $\bfk$-modules and
\begin{equation}
\irr(S):= \frakM(X) \setminus \{ q|_{\lbar{s}} \mid s\in S \}.
\mlabel{eq:Irrs}
\end{equation}
More precisely as \bfk-modules, $$\bfk \mapm{X}=\bfk\Irr(S)\oplus \Id(S).$$
 \label{it:cd4}
\end{enumerate}
\mlabel{lem:cdl}
\end{lemma}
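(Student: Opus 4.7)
The plan is to establish the equivalence via the two-step strategy common to Composition-Diamond type results, adapted to the operated context. The first observation, requiring neither hypothesis, is a top-reduction procedure: for any $0 \neq f \in \bfk\mapm{X}$, if $\bar{f} = q|_{\bar{s}}$ for some $s \in S$ and $q \in \mstar$, then $f' := f - c_f q|_s$ belongs to the same coset of $\Id(S)$ and satisfies $\bar{f'} < \bar{f}$. Since $\leq$ is a well-order, iterating the procedure produces a representative in $\bfk\Irr(S)$ within the same coset, so $\bfk\mapm{X} = \bfk\Irr(S) + \Id(S)$ holds unconditionally. The substance of (2) is therefore that the sum is direct, i.e., $\bfk\Irr(S) \cap \Id(S) = 0$.

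For $(1) \Rightarrow (2)$, I would prove that every nonzero $g \in \Id(S)$ satisfies $\bar{g} = q|_{\bar{s}}$ for some $s \in S$ and $q \in \mstar$, which forces any such $g$ out of $\bfk\Irr(S)$. Using Lemma~\mref{lem:opideal}, write $g = \sum_i c_i q_i|_{s_i}$ with $c_i \in \nbfk$, and let $w$ be the maximum of the $q_i|_{\bar{s_i}}$. If $w = \bar{g}$ the claim holds at once; otherwise the coefficients coming from the terms with $q_i|_{\bar{s_i}} = w$ must cancel. Isolating two such cancelling terms $q_i|_{s_i}$ and $q_j|_{s_j}$, the two occurrences of $\bar{s_i}$ and $\bar{s_j}$ determined by $q_i$ and $q_j$ inside the same ambient $w$ fall into one of three configurations---disjoint, overlapping, or including---the last two corresponding to the standard composition types. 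The Gr\"obner-Shirshov basis hypothesis guarantees that the associated composition admits a representation as a combination of terms $p_k|_{s_k}$ with $p_k|_{\bar{s_k}} < w$, so $c_i q_i|_{s_i} + c_j q_j|_{s_j}$ may be rewritten accordingly. Iterating pairwise replacements, which terminates by the well-order on $\mapm{X}$, produces an expression of $g$ whose maximal $q_i|_{\bar{s_i}}$ coincides with $\bar{g}$.

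The converse $(2) \Rightarrow (1)$ is the easy direction. Any composition $\phi$ of elements of $S$ lies in $\Id(S)$ by construction, so the top-reduction of the first paragraph yields some $r \in \bfk\Irr(S)$ with $r - \phi \in \Id(S)$, whence $r \in \bfk\Irr(S) \cap \Id(S)$. Hypothesis (2) forces $r = 0$, which is exactly the statement that $\phi$ reduces to zero modulo $S$ with the leading-monomial bound built into the definition of a Gr\"obner-Shirshov basis.

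The main obstacle is the case analysis underlying $(1) \Rightarrow (2)$: one must classify how two bracketed leading monomials $\bar{s_i}$ and $\bar{s_j}$ can be simultaneously embedded in an ambient $w \in \mapm{X}$ so as to produce identical $\star$-insertions $q_i|_{\bar{s_i}} = q_j|_{\bar{s_j}} = w$. Unlike the flat associative case, the operated context admits occurrences nested arbitrarily inside brackets, and Lemma~\mref{lem:qubra} is the combinatorial input that tames this nesting by forcing each subword to sit either entirely inside or entirely outside a given bracket. The inductive loop on $\bar{g}$, powered by the well-order together with the composition hypothesis, then closes the argument.
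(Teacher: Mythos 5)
The paper does not prove this lemma; it is quoted verbatim from the cited sources \cite{BCQ,GSZ}, and the authors explicitly decline even to recall the definition of a Gr\"obner--Shirshov basis. Your outline is the standard Composition--Diamond argument from those references --- unconditional top-reduction giving $\bfk\mapm{X}=\bfk\Irr(S)+\Id(S)$, the key lemma that $\bar g=q|_{\bar s}$ for $0\neq g\in\Id(S)$ via transfinite induction on the maximal $q_i|_{\bar{s_i}}$ and the separated/intersecting/nested trichotomy (the paper's Lemma~\mref{lem:thrrel}), and the easy converse --- and it is correct at the level of detail given, with the usual caveat that the precise definitions of intersection and inclusion compositions and of triviality mod $(S,w)$ are left implicit.
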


\subsection{Term-rewriting systems}
In this subsection, we give a method for checking confluence of
\term-rewriting systems. Let us recall some basic notations and results~\mcite{GGSZ}.

\begin{defn}
Let $V$ be a free $\bfk$-module with a given $\bfk$-basis $W$ and $f,g\in V$.
\begin{enumerate}
\item The \emph{support} $\Supp(f)$ of $f$ is the set of monomials (with non-zero coefficients) of $f$. Here we use the convention that $\Supp(0) = \emptyset$. \mlabel{it:suppf}

%\item For $f=\sum_{w\in W}c_w w \in V$ with $c_w\in \bfk$, the \emph{ support} $\Supp(f)$ of $f$ is the set $\{w\in W\,|\,c_w\neq 0\}$.

\item We write $f \dps g$ to indicate that $\Supp(f) \cap \Supp(g) = \emptyset$ and say $f + g$ is a \emph{ direct sum} of $f$ and $g$. If this is the case, we also use $f\dps g$ for the sum $f+ g$.

\item For $w \in \Supp(f)$ with the coefficient $c_w$, we define $R_w(f) := c_w w -f \in V$ and so $f = c_w w \dps (-R_w(f))$.
\end{enumerate}
\mlabel{def:dps}
\end{defn}

\begin{lemma}{\rm \cite[Lem.~2.12]{GGSZ}}
Let $V$ be a free $\bfk$-module with a $\bfk$-basis $W$ and $f,g\in V$.
If $f\dps g$, then $cf\dps dg$ for any $c,d\in \bfk$.%
\mlabel{lem:dsum}
\end{lemma}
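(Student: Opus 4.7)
The plan is to unpack Definition~\mref{def:dps}: the notation $f\dps g$ encodes the single condition $\Supp(f)\cap\Supp(g)=\emptyset$, so the entire lemma reduces to verifying $\Supp(cf)\cap\Supp(dg)=\emptyset$ for arbitrary $c,d\in\bfk$. The key observation I would isolate first is the monotonicity of support under scalar multiplication: for any $h\in V$ and any $c\in\bfk$, one has $\Supp(ch)\subseteq\Supp(h)$. Indeed, expanding $h=\sum_{w\in W}a_w w$ in the basis $W$ gives $ch=\sum_{w\in W}(c a_w)w$, and a basis element $w$ can contribute to $ch$ with nonzero coefficient only when $a_w\neq 0$, that is, only when $w\in\Supp(h)$.

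Granted this, I would conclude immediately via the chain
\[
\Supp(cf)\cap\Supp(dg)\;\subseteq\;\Supp(f)\cap\Supp(g)\;=\;\emptyset,
\]
which by Definition~\mref{def:dps} is exactly the assertion $cf\dps dg$. The degenerate possibilities $c=0$ or $d=0$ need no separate treatment, because the convention $\Supp(0)=\emptyset$ fixed in Definition~\mref{def:dps}~(\mref{it:suppf}) makes one of the two supports empty, so the intersection is trivially empty; in particular, no domain hypothesis on $\bfk$ is actually invoked in the argument. I do not anticipate any obstacle: the lemma is a bookkeeping consequence of the definitions, and the proposed proof consists essentially of the one-line inclusion above preceded by the monotonicity remark.
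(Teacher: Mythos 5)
Your proof is correct: the inclusion $\Supp(ch)\subseteq\Supp(h)$ follows from $ca_w\neq 0\Rightarrow a_w\neq 0$ (valid over any ring, so indeed no domain hypothesis is needed), and the convention $\Supp(0)=\emptyset$ disposes of $c=0$ or $d=0$. The paper itself gives no proof of Lemma~\mref{lem:dsum} --- it is quoted from \cite[Lem.~2.12]{GGSZ} --- but your argument is the standard one and coincides with how the paper argues the closely related Lemma~\mref{lem:cform}~(\mref{it:tinf}), so there is nothing to object to.
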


\begin{remark}
Using the notation $\dps$, the equation $s=c_s \bar{s}-\re{s}$ in Definition~\mref{def:irrS}~(\mref{it:res})
can be written in more detail as $s=c_s \bar{s} \dps (-\re{s})$.
\mlabel{re:sdps}
\end{remark}

The following is the concept of \term-rewriting systems.

\begin{defn}
Let $V$ be a free $\bfk$-module with a $\bfk$-basis $W$.
A \emph{ \term-rewriting system $\Pi$ on $V$ with respect to $W$} is a binary relation $\Pi \subseteq W \times V$. An element $(t,v)\in \Pi$ is called a \emph{(term) rewriting rule} of $\Pi$, denoted by $t\to v$. The \term-rewriting system $\Pi$ is called \emph{ \simple} if $t \dps v$ for all $t\to v\in \Pi$.
\mlabel{defn:trs}
\end{defn}

\begin{remark}
Now we explain the requirement that the \term-rewriting system $\Pi$ is simple. Suppose $\Pi$ is not simple. Then by Definition~\mref{defn:trs}, there is a rewriting rule $t\to v$ such that $t\in \Supp(v)$. Assume $v = ct \dps (-R_t(v))$ for some $c\in \nbfk$. Then
$$t \to_\Pi v = ct \dps (-R_t(f)) \to_\Pi cv - R_t(v) = c^2 t\dps (-c-1)R_t(v) \to_\Pi \cdots .$$
So as long as $c$ is not a nilpotent element, $\Pi$ is not terminating. In the remainder of this
paper, we always assume that the \term-rewriting system is simple, unless specified otherwise.
\end{remark}

\begin{defn}
Let $V$ be a free $\bfk$-module with a $\bfk$-basis $W$, $\Pi$ a simple \term-rewriting system on $V$ with respect to $W$ and $f,g\in V$.
\begin{enumerate}
\item We speak that $f$ \emph{ rewrites} to $g$ \emph{ in one-step}, denoted by $f \to_\Pi g$ or $\tvarrow{f}{g}{\Pi}$, if
$$ f = c_t t\dps (-R_t(f))\,\text{ and }\,g= c_t v - R_t(f)\, \text{ for some } c_t\in \nbfk\,\text{ and }\, t\to v\in \Pi.$$
\mlabel{it:Trule}

\item The reflexive-transitive closure of the binary relation $\rightarrow_\Pi$ on $V$ is denoted by $\astarrow_\Pi$. If $f \astarrow_\Pi g$ (resp. $f \not\astarrow_\Pi g$), we speak that \emph{ $f$ rewrites (resp. doesn't rewrite ) to $g$ with respect to $\Pi$}. \mlabel{it:rtcl}

\item We call $f$ and  $g$ are \emph{ joinable}, denoted by $f \downarrow_\Pi g$, if there exists $h \in V$ such that $f \astarrow_\Pi h$ and $g \astarrow_\Pi h$.
    \mlabel{it:joinfg}

\item We say $f$ \emph{a normal form} if no more rewriting rules can apply.
\end{enumerate}
\mlabel{def:ARSbasics}
\end{defn}

\begin{remark} Let $f,g\in V$.
\begin{enumerate}
\item By Definition~\mref{def:ARSbasics}~(\mref{it:rtcl}), $f\astarrow_\Pi f$ and
\begin{equation*}
f\astarrow g \Longleftrightarrow f=: f_0 \tpi f_1 \tpi \cdots \tpi f_n:= g \,\text{ for some } n\geq 0, f_i\in V, 0\leq i\leq n.
\end{equation*}
   \mlabel{it:itself}

\item If $f\astarrow_\Pi g$, then $f\downarrow_\Pi g$ by $g\astarrow_\Pi g$.
In particular, $f\downarrow_\Pi f$ by $f\astarrow_\Pi f$.
\mlabel{it:tdown}

%\item From Definition~\mref{def:ARSbasics}~(\mref{it:rtcl}), $f\astarrow g$ if and only if $$f=: f_0 \tpi f_1 \tpi \cdots \tpi f_n:= g \,\text{ for some } n\geq 0, f_i\in V, 0\leq i\leq n.$$
%    \mlabel{it:astc}
\end{enumerate}
\mlabel{re:fgtri}
\end{remark}

The following definitions are adapted from abstract rewriting systems \mcite{BN, BKVT}.

\begin{defn} Let $V$ be a free $\bfk$-module with a $\bfk$-basis $W$, $\Pi$ a simple \term-rewriting system on $V$ with respect to $W$.
 \begin{enumerate}
\item $\Pi$ is \emph{ terminating} if there is no infinite chain of one-step rewriting \vspace{-3pt}$$f_0 \rightarrow_\Pi f_1 \rightarrow_\Pi f_2 \cdots \quad.\vspace{-3pt}$$

\item $f\in V$ is \emph{locally confluent} if for every local fork $(h \prescript{}{\Pi}{\tto} f\to_\Pi g)$, we have $g\downarrow_\Pi h$.

\item $f\in V$ is \emph{confluent} if for every fork $(h \prescript{}{\Pi}{\lastarrow} f\astarrow_\Pi g)$, we have $g\downarrow_\Pi h$.

\item $\Pi$ is \emph{locally confluent (resp. confluent)} if every $f\in V$ is locally confluent (resp. confluent).

\item $\Pi$ is \emph{ convergent} if it is both terminating and confluent. \mlabel{it:dconv}
\end{enumerate}
\mlabel{defn:ARS}
\end{defn}

A well-known result on rewriting systems is Newman's Lemma.

\begin{lemma}{\rm (\cite[Lem. 2.7.2]{BN})}
 A terminating rewriting system is confluent if and only if it is locally confluent. \label{lem:newman}
 \end{lemma}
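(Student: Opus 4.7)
The forward direction is immediate: a local fork $h \tto_\Pi f \to_\Pi g$ is in particular a fork $h \lastarrow_\Pi f \astarrow_\Pi g$, so confluence of $f$ forces local confluence. The nontrivial content is the converse, and my plan is to run the standard Noetherian-induction (diamond-chase) argument, leveraging the termination hypothesis to get a well-founded order on $V$.

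Since $\Pi$ is terminating, there is no infinite chain $f_0 \to_\Pi f_1 \to_\Pi \cdots$, which means the relation $\to_\Pi$ is well-founded. I would prove that every $f \in V$ is confluent by well-founded induction on $f$ with respect to $\to_\Pi$. The base case is when $f$ is a normal form: any fork $h \lastarrow_\Pi f \astarrow_\Pi g$ must consist of zero-step rewrites, so $h = f = g$ and joinability is trivial via Remark~\mref{re:fgtri}~(\mref{it:tdown}).

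For the inductive step, fix $f$ and a fork $h \lastarrow_\Pi f \astarrow_\Pi g$, and assume every $f'$ with $f \to_\Pi^+ f'$ is confluent. If either path has length zero then $h \downarrow_\Pi g$ trivially, so I may assume $f \to_\Pi f_1 \astarrow_\Pi h$ and $f \to_\Pi f_2 \astarrow_\Pi g$ for some $f_1, f_2$. Applying local confluence to the local fork $f_1 \tto_\Pi f \to_\Pi f_2$ produces $k \in V$ with $f_1 \astarrow_\Pi k$ and $f_2 \astarrow_\Pi k$. Now $f_1$ is confluent by the induction hypothesis, and we have two descendants of $f_1$, namely $h$ and $k$; so there exists $m$ with $h \astarrow_\Pi m$ and $k \astarrow_\Pi m$. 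Finally, $f_2$ is confluent by the induction hypothesis, and its descendants $g$ and $m$ (via $k$) must join at some $n$ with $g \astarrow_\Pi n$ and $m \astarrow_\Pi n$. Composing, $h \astarrow_\Pi m \astarrow_\Pi n$ and $g \astarrow_\Pi n$, which is exactly $h \downarrow_\Pi g$.

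The only real subtlety is making sure the induction hypothesis is applicable: both $f_1$ and $f_2$ are reached from $f$ in a single step, hence lie strictly below $f$ in the well-founded order induced by $\to_\Pi^+$, so invoking confluence at each of them is legitimate. No algebraic features of $V$ or $\Pi$ beyond those already encoded in Definitions~\mref{defn:trs} and~\mref{def:ARSbasics} are used, which is why the result is purely a statement about abstract rewriting; and for that reason I would simply cite~\cite{BN} as the paper does, rather than redo the diamond chase in the text.
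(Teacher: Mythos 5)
Your argument is correct: it is the standard Noetherian-induction (Huet-style) proof of Newman's Lemma, which is precisely the proof in the cited reference \cite[Lem.~2.7.2]{BN}; the paper itself gives no proof and simply cites that source. The one point worth being explicit about, which you do handle, is that termination of $\to_\Pi$ makes $\to_\Pi^+$ well-founded, so the induction hypothesis may legitimately be invoked at $f_1$ and $f_2$.
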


The following result will be used later.

\begin{lemma} {\rm (\cite[Thm. 2.20]{GGSZ})}
Let $V$ be a free $\bfk$-module with a $\bfk$-basis $W$ and
$\Pi$ a simple \term-rewriting system on $V$ with respect to $W$.
If\, $\Pi$ is confluent, then, for all $m \geq 1$ and
$f_1, \dots, f_m, g_1, \dots, g_m \in V$, $$f_i \downarrow_\Pi g_i  \quad (1 \leq i \leq m),
{\rm\ and\ } \sum_{i=1}^m g_i = 0 \implies \left(\sum_{i=1}^m f_i \right) \astarrow_\Pi 0.$$
%\begin{enumerate}
%\item $f\to_\Pi g$ implies $f-g\to_\Pi 0$; \mlabel{it:minus}
%
%\item $(f-g) \astarrow_\Pi 0$ implies $f \downarrow_\Pi g$; \mlabel{it:join}
%
%%\item If\, $\Pi$ is confluent, then
%%$$ f \downarrow_\Pi g,\  f' \downarrow_\Pi g' \implies (f + f') \downarrow_\Pi (g+g')\quad \forall f, g, f', g' \in V.$$ \label{item:2-additivity}
%
%
%\item  If\, $\Pi$ is confluent, then, for all $m \geq 1$ and  $f_1, \dots, f_m, g_1, \dots, g_m \in V$, $$f_i \downarrow_\Pi g_i  \quad (1 \leq i \leq m), {\rm\ and\ } \sum_{i=1}^m g_i = 0 \implies \left(\sum_{i=1}^m f_i \right) \astarrow_\Pi 0.$$ \mlabel{it:sumzero}
%%\item $f \to_\Pi g$. \label{item:eq1}
%%\item $(f-g) \to_\Pi 0$. \label{item:eq2}
%%\item $(f - g) \astarrow_\Pi 0$; $($equivalently,  $(f - g) \downarrow_\Pi 0$$)$. \label{item:eq3}
%%\item $f \downarrow_\Pi g$.\label{item:eq4}
%\end{enumerate}
%Then $(\ref{item:eq1})\implies (\ref{item:eq2})\,\implies\,(\ref{item:eq3})\,\implies\,(\ref{item:eq4})$.
\mlabel{lem:red}
\end{lemma}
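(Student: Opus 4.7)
The plan is to reduce the statement to the Church-Rosser property of confluent rewriting systems, after first proving that the equivalence relation generated by $\to_\Pi$ is compatible with addition. Let $\sim_\Pi$ denote the reflexive-symmetric-transitive closure of $\to_\Pi$ on $V$; it visibly contains $\downarrow_\Pi$ and is a priori larger. The first step is to show that $\sim_\Pi$ respects addition: if $f \sim_\Pi f'$, then $f+h \sim_\Pi f'+h$ for every $h \in V$.

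To prove this, it suffices to establish the one-step version: if $f \to_\Pi f'$ via some rule $t\to v \in \Pi$, then $f+h \sim_\Pi f'+h$. Write $f = c_t t \dps (-\re{f})$ and $f' = c_t v - \re{f}$, and let $d_t$ be the coefficient of $t$ in $h$. If $c_t+d_t \neq 0$, applying the rule $t\to v$ to $f+h$ (legitimate since $t$ appears there with nonzero coefficient, and $t\notin\Supp(v)$ by simplicity of $\Pi$) yields $(f+h)+(c_t+d_t)(v-t)$; if additionally $d_t\neq 0$, applying $t\to v$ to $f'+h$ (whose $t$-coefficient is $d_t$, since $f'$ has zero $t$-coefficient by simplicity) yields exactly the same element, whereas if $d_t=0$ then $f+h\to_\Pi f'+h$ directly. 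In the remaining case $c_t+d_t=0$, we have $d_t=-c_t\neq 0$, and applying $t\to v$ to $f'+h$ lands at $f+h$. Hence in every subcase $f+h$ and $f'+h$ are joined by a zig-zag of length at most two, so $f+h\sim_\Pi f'+h$.

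With this compatibility established, an induction on zig-zag length shows that $\sim_\Pi$ respects finite sums. Summing the individual relations $f_i\sim_\Pi g_i$ (obtained from $f_i\downarrow_\Pi g_i$ via Remark~\mref{re:fgtri}) yields $\sum_{i=1}^m f_i \sim_\Pi \sum_{i=1}^m g_i = 0$. The Church-Rosser theorem---the standard argument that confluence forces $\sim_\Pi$ and $\downarrow_\Pi$ to coincide, by inductively collapsing peaks in a zig-zag via the confluence hypothesis---then gives $\sum_{i=1}^m f_i \downarrow_\Pi 0$. Since $\Supp(0)=\emptyset$, no rewriting rule applies to $0$, so $0$ is its only $\astarrow_\Pi$-reduct; a common reduct of $\sum_i f_i$ and $0$ must therefore equal $0$, and we conclude $\sum_{i=1}^m f_i \astarrow_\Pi 0$.

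The main obstacle is the one-step compatibility claim: cancellation between $f$ and $h$ at the redex monomial $t$ can prevent direct application of the rule to $f+h$, and even when it does apply the rewriting in $f+h$ can use a different coefficient than the rewriting in $f$. The case analysis on whether $c_t+d_t$ vanishes, together with the simplicity hypothesis that guarantees $v$ has no $t$-component (so that $f'$ and $f'+h$ have predictable $t$-coefficients), is the technical heart of the argument; everything downstream is routine rewriting-theoretic bookkeeping.
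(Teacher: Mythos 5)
Your proof is correct. The paper itself gives no proof of this lemma---it is quoted verbatim from \cite[Thm.~2.20]{GGSZ}---so the only meaningful comparison is with that source, whose argument rests on exactly the same key idea as yours: a one-step additive compatibility lemma with a case split on whether the coefficient of the redex monomial $t$ in $f+h$ vanishes, made possible by simplicity (so that $f'=f+c_t(v-t)$ has zero $t$-coefficient and the $t$-coefficients of $f+h$ and $f'+h$ are $c_t+d_t$ and $d_t$ respectively). Your computation checks out in all three subcases: when $c_t+d_t\neq 0$ and $d_t\neq 0$ both sides rewrite to $(f+h)+(c_t+d_t)(v-t)$; when $d_t=0$ one gets $f+h\to_\Pi f'+h$ directly; and when $c_t+d_t=0$ one gets $f'+h\to_\Pi f+h$. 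The difference is only in the packaging: the cited source iterates joinability statements ($f\downarrow_\Pi g$ and $f'\downarrow_\Pi g'$ imply $f+f'\downarrow_\Pi g+g'$ under confluence) and inducts on $m$, whereas you pass to the generated equivalence $\sim_\Pi$, sum the relations, and invoke the Church--Rosser theorem to convert $\sum_i f_i\sim_\Pi 0$ back into $\sum_i f_i\downarrow_\Pi 0$. Your route is arguably cleaner because additivity of $\sim_\Pi$ needs no confluence hypothesis---confluence enters only once, at the Church--Rosser step---and the final observation that $0$ is its own unique $\astarrow_\Pi$-reduct correctly upgrades $\downarrow_\Pi 0$ to $\astarrow_\Pi 0$.
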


\begin{remark}
If $\Pi$ is confluent and $f \downarrow_\Pi g$, then $f-g\astarrow_\Pi 0$
by $-g \downarrow_\Pi -g$ and Lemma~\mref{lem:red}.
\mlabel{re:tozero}
\end{remark}

The following is a concept strong than locally confluence and similar to Buchberger's $S$-polynomials.

\begin{defn}
Let $V$ be a free $\bfk$-module with a $\bfk$-basis $W$, $\Pi$ a simple \term-rewriting system on $V$ with respect to $W$.
\begin{enumerate}
\item \emph{ A local base-fork} is a fork $(cv_1 \prescript{}{\Pi}{\tto}\ ct\to_\Pi cv_2)$, where $c \in \nbfk$ and $t\to v_1, t\to v_2 \in \Pi$ are rewriting rules.

\item The \term-rewriting system $\Pi$ is called \emph{locally base-confluent} if for every local base-fork $(cv_1 \prescript{}{\Pi}{\tto}\ ct\to_\Pi cv_2)$, we have $c (v_1 - v_2) \astarrow_\Pi 0$.

\item $\Pi$ is \emph{compatible} with a linear order $\leq$ on $W$
if $\lbar{v} < t$ for each $t\to v \in \Pi$. \mlabel{it:compor}

%\item A \term-rewriting system $\Pi$ is \emph{compatible} with a linear order $\leq$ on $W$ if $\lbar{v} < t$ for each $t\to v \in \Pi$, where $\lbar{v}$ is the leading (largest) monomial of $v$ with respect to $\leq$.
\end{enumerate}
\mlabel{defn:bfork}
\end{defn}

\begin{lemma}{\rm (\cite[Lem.~2.22]{GGSZ})} Let $V$ be a free $\bfk$-module with a $\bfk$-basis $W$ and let $\Pi$ be a simple \term-rewriting system on $V$ which is compatible with a well order $\leq$ on $W$. If $\Pi$ is locally base-confluent, then it is locally confluent.
\mlabel{lem:ltcon}
\end{lemma}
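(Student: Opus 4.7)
The plan is to fix an arbitrary local fork $(h\prescript{}{\Pi}{\tto} f\to_\Pi g)$ and analyze it via the unpacking provided by Definition~\mref{def:ARSbasics}~(\mref{it:Trule}): there exist rules $t_i\to v_i\in\Pi$ for $i=1,2$ and scalars $c_i\in\nbfk$ equal to the coefficient of $t_i$ in $f$, with $g=f+c_1(v_1-t_1)$ and $h=f+c_2(v_2-t_2)$. I would then split into two cases depending on whether $t_1=t_2$.

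In Case~A ($t_1=t_2=t$), the uniqueness of the coefficient of $t$ in $f$ forces $c_1=c_2=c$, so $g-h=c(v_1-v_2)$, which is exactly the difference produced by the local base-fork $(cv_1\prescript{}{\Pi}{\tto} ct\to_\Pi cv_2)$. I would invoke local base-confluence to obtain a chain $c(v_1-v_2)=p_0\to_\Pi p_1\to_\Pi\cdots\to_\Pi p_n=0$, then lift this chain to parallel rewrites $(g_i,h_i)$ maintaining the invariant $g_i-h_i=p_i$: at stage $i$, if the step $p_i\to p_{i+1}$ applies rule $s_i\to u_i$ at scalar $d_i$, then $s_i$ lies in the support of at least one of $g_i,h_i$ (because $d_i\neq 0$), and a single $\Pi$-step on each of those, together with a short computation, updates the pair correctly. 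After $n$ stages, $g_n=h_n$, giving $g\downarrow_\Pi h$.

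In Case~B ($t_1\neq t_2$), I would cross-apply the two rules: rewrite $g$ at $t_2$ and $h$ at $t_1$ (skipping whichever rewrite is blocked by a vanishing coefficient). A direct calculation shows the resulting pair $(g',h')$ satisfies
\[g'-h' \;=\; c_1 a\,(v_2-t_2) \;-\; c_2 b\,(v_1-t_1),\]
where $a$ is the coefficient of $t_2$ in $v_1$ and $b$ is the coefficient of $t_1$ in $v_2$. By compatibility $\overline{v_i}<t_i$, every monomial appearing in $g'-h'$ is strictly below $\max\{t_1,t_2\}$ in the well-order. I would close the argument by Noetherian induction along $\leq$, treating the residual discrepancy as a smaller instance to which the same analysis (and hence local base-confluence) applies.

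The main obstacle lies in Case~B: the iterated cross-rewrites can reintroduce copies of $t_1$ or $t_2$ scaled by products of the interaction coefficients $a,b$, and the bookkeeping must ensure these corrections strictly descend in the well-order at each iteration. Compatibility with $\leq$ is the crucial hypothesis that underwrites this descent, while local base-confluence (as used explicitly in Case~A) handles every atomic collision along the way. The subcases where $c_2+c_1a=0$ or $c_1+c_2b=0$ should be dealt with symmetrically by omitting the corresponding cross-rewrite, which I expect to handle cleanly once the general induction is set up.
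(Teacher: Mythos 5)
The paper does not actually prove this lemma; it quotes it from \cite[Lem.~2.22]{GGSZ}, so there is no internal proof to compare against, and your two-case skeleton (same rewritten basis element vs.\ different ones) is the standard route. Case~A is correct as you describe it: lifting the chain $c(v_1-v_2)=p_0\to_\Pi\cdots\to_\Pi p_n=0$ to pairs $(g_i,h_i)$ with $g_i-h_i=p_i$ works because the coefficients of $s_i$ in $g_i$ and $h_i$ differ by $d_i\neq 0$ and hence cannot both vanish, and the invariant is preserved whether one or both sides get rewritten. The soft spot is the end of Case~B: the proposed ``Noetherian induction'' is not actually set up (the residual $c_1a(v_2-t_2)-c_2b(v_1-t_1)$ is not itself a local fork, so ``the same analysis'' does not literally apply to it), and it is also unnecessary. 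Compatibility forces at most one of $a,b$ to be nonzero --- if $t_1>t_2$ then every monomial of $v_2$ is below $t_2<t_1$, so $b=0$ --- whence $g'-h'=c_1a(v_2-t_2)$. Simplicity gives $t_2\notin\Supp(v_2)$, and one checks $t_2\notin\Supp(g')$ after the cross-rewrite (in both the generic and the degenerate subcase $c_2+c_1a=0$), so if $a\neq 0$ the coefficient of $t_2$ in $h'$ is $c_1a\neq 0$ (here $\bfk$ being a domain is used) and a single further application of $t_2\to v_2$ to $h'$ yields $h''$ with $g'-h''=0$. Equivalently, $g'-h'\to_\Pi 0$ in one step and your Case~A lifting applies verbatim. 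So the ``main obstacle'' you flag --- iterated reintroduction of $t_1,t_2$ --- does not occur, precisely because of simplicity and compatibility; once this two-line computation replaces the induction, the proof is complete.
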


The following concept is followed from general abstract rewriting systems~\cite[Def.~1.1.6]{BKVT}.

\begin{defn}
Let $V$ be a free $\bfk$-modules with a $\bfk$-basis $W$ and let $\Pi$ be a \simple term-rewriting system on $V$ with respect to $W$. Let $Y\subseteq W$ and $\Pi_{\bfk Y}:= \Pi \cap(Y\times \bfk Y)$. We call $\Pi_{\bfk Y}$ a \emph{sub-term-rewriting system} of $\Pi$ on $\bfk Y$ with respect to $Y$, denoted by $\Pi_{\bfk Y} \leq \Pi$, if
$\bfk Y$ is closed under $\Pi$, i.e., for any $f\in \bfk Y$ and any $g\in V$,
$f\to_{\Pi} g$ implies $g\in \bfk Y$.
\mlabel{defn:subre}
\end{defn}

\begin{remark}
Since $\Pi$ is simple, $\Pi_{\bfk Y}$ is also simple. Indeed, let $t\to v\in \Pi_{\bfk Y}$ be a rewriting rule with
$t\in Y$ and $v\in \bfk Y$. Then $t\to v\in \Pi$ by $\Pi_{\bfk Y}\subseteq \Pi$. Since $\Pi$ is simple, we have $t\notin \Supp(v)$ by Definition~\mref{defn:trs} and so $\Pi_{\bfk Y}$ is simple.
\end{remark}

We record the following properties.

\begin{lemma}
Let $V$ be a free $\bfk$-module with a $\bfk$-basis $W$, and let
$\Pi$ be a simple \term-rewriting system on $V$ with respect to $W$.

\begin{enumerate}
\item If $t\in \Supp(cf)$ with $t\in W$, $c\in \nbfk$ and $f\in V$, then $t\in \Supp(f)$.
\mlabel{it:tinf}

\item If $cf \tpi g$ with $c\in \nbfk$ and $f, g\in V$, then $g = cg'$ for some $g'\in V$.
\mlabel{it:geqcg}

\item If $cf = 0$ with $c\in \nbfk$ and $f\in V$, then $f=0$. \mlabel{it:feq0}
\item For $c\in \nbfk$ and $f, g \in V$ with $f\neq g$, $f\tpi g$ if and only if $cf\tpi cg $.
\mlabel{it:quc}
\end{enumerate}
\mlabel{lem:cform}
\end{lemma}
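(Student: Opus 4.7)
The plan is to handle the four parts in the order (\textit{a}), (\textit{c}), (\textit{b}), (\textit{d}), because each later part will lean on the earlier ones. Throughout, I would use that $\bfk$ is a domain, so that $c\in\nbfk$ and $a\in\bfk$ with $ca=0$ forces $a=0$, together with the observation that expressing $f\in V$ as $f=\sum_{w\in W}a_w w$ gives $cf=\sum_{w\in W}(ca_w)w$, where the sum on the right is already the expansion of $cf$ in the basis $W$.

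For (\textit{a}), write $f=\sum_{w\in W}a_w w$. Then $t\in\Supp(cf)$ means the coefficient $ca_t$ of $t$ in $cf$ is nonzero, so $a_t\neq 0$ (here we use that $\bfk$ is a domain), hence $t\in\Supp(f)$. Part (\textit{c}) is the same observation: $cf=0$ forces $ca_w=0$ for every $w\in W$, and the domain hypothesis gives $a_w=0$ for all $w$, so $f=0$.

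For (\textit{b}), unwind the definition of $\tpi$ from Definition~\mref{def:ARSbasics}~(\mref{it:Trule}): there exist $c_t\in\nbfk$ and a rule $t\to v\in\Pi$ with
\[
cf=c_t\,t\dps(-R_t(cf)),\qquad g=c_t v-R_t(cf).
\]
By (\textit{a}), $t\in\Supp(f)$; let $a_t\in\nbfk$ be its coefficient in $f$, so that $f=a_t t\dps(-R_t(f))$. Multiplying this direct-sum decomposition by $c$ and applying Lemma~\mref{lem:dsum} gives $cf=ca_t\,t\dps(-cR_t(f))$. Comparing with the first display, $c_t=ca_t$ and $R_t(cf)=cR_t(f)$, and therefore
\[
g=ca_t v-cR_t(f)=c\bigl(a_t v-R_t(f)\bigr)=:cg',
\]
as desired.

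For (\textit{d}), the forward direction runs the same calculation in reverse: from $f=a_t t\dps(-R_t(f))$ and $g=a_t v-R_t(f)$, multiplying by $c$ and invoking Lemma~\mref{lem:dsum} yields $cf=ca_t t\dps(-cR_t(f))$ and $cg=ca_t v-cR_t(f)$; since $ca_t\in\nbfk$ (domain), this exhibits the step $cf\tpi cg$ using the same rule $t\to v$. Conversely, assume $cf\tpi cg$. By (\textit{b}), $cg=cg'$ for some $g'\in V$ obtained from the rewriting data, and tracing the construction one finds $g'=a_t v-R_t(f)$ where $a_t$ is the (nonzero) coefficient of $t$ in $f$. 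Now $c(g-g')=cg-cg'=0$, so (\textit{c}) gives $g=g'$, which together with $f=a_t t\dps(-R_t(f))$ and the rule $t\to v\in\Pi$ exhibits the one-step rewriting $f\tpi g$. I don't anticipate any genuine obstacle here: the whole lemma is a bookkeeping exercise whose only substantive input is the domain hypothesis on $\bfk$ plus Lemma~\mref{lem:dsum}; the only point demanding a little care is remembering that the coefficient of the rewritten term in $cf$ is $ca_t$ (not $c_t$ chosen independently), so that the rewriting of $cf$ is forced to come from the rewriting of $f$.
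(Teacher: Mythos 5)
Your proof is correct and follows essentially the same route as the paper's: coefficient-wise comparison in the basis $W$ plus the domain hypothesis for parts (\ref{it:tinf}) and (\ref{it:feq0}), then multiplying the direct-sum decomposition $f=a_t t\dps(-R_t(f))$ by $c$ via Lemma~\ref{lem:dsum} for parts (\ref{it:geqcg}) and (\ref{it:quc}), with the converse of (\ref{it:quc}) reduced to (\ref{it:feq0}). (The only cosmetic difference is that the domain hypothesis is not actually needed in part (\ref{it:tinf}), since $ca_t\neq 0$ already forces $a_t\neq 0$ in any ring.)
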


\begin{proof}
(\mref{it:tinf}) Suppose for a contrary that $t\notin\Supp(f)$. Since $W$ is a \bfk-basis of $V$, by Definition~\mref{def:dps}~(\mref{it:suppf}), we may write $f = \sum_i c_i w_i$,
where each $c_i\in \nbfk$ and $w_i\in W\setminus\{t\}$. Then $cf = \sum_i cc_i w_i$. Since $w_i\neq t$
for each $i$, we have $t\notin \Supp(cf)$, a contradiction.

(\mref{it:geqcg}) Suppose $cf\overset{(t,v)}{\lto_{\Pi}} g$ for some $t\to v\in \Pi$. Then $t\in \Supp(cf)$ and so $t\in \Supp(f)$ by Item~(\mref{it:tinf}). Write
$f = c_tt\dps (-R_t(f))$ with $c_t\in\nbfk$. Then by Lemma~\mref{lem:dsum},
$$cf = cc_tt \dps (-cR_t(f)) \overset{(t,v)}{\lto_{\Pi}}  cc_tv -cR_t(f) = c(c_tv - R_t(f))= g,$$ as required.

(\mref{it:feq0}) Since $W$ is a \bfk-basis of $V$, we may write $f = \sum_i c_i w_i$ with
$c_i\in \bfk$ and $w_i\in W$ for each $i$. Then $cf = \sum_i cc_i w_i = 0$ and so $cc_i = 0$ for each $i$. Since $\bfk$ is a domain by our hypothesis and $c\neq 0$, we have $c_i = 0$ for each $i$, that is, $f = 0$.

(\mref{it:quc}) Suppose $f\overset{(t,v)}{\lto_{\Pi}} g$ for some $t\to v\in \Pi$. By Definition~\mref{def:ARSbasics}~(\mref{it:Trule}), we may write
$$f = dt\dps (-R_t(f))\, \text{ and }\, g = dv - R_t(f)\, \text{ for some } d\in \nbfk.$$
Then by Lemma~\mref{lem:dsum},
$$cf = cdt\dps (-cR_t(f))\, \text{ and }\, cg = cdv - cR_t(f)$$
and so $cf \overset{(t,v)}{\lto_{\Pi}} cg$.
Conversely, suppose $cf \overset{(t,v)}{\lto_\Pi} cg$ for some $t\to v\in \Pi$.
Then $t\in \Supp(cf)$ and so $t\in \Supp(f)$ by Item~(\mref{it:tinf}). Write $f = c_tt\dps (-R_t(f))$ with $c_t\in \nbfk$.
Then from Lemma~\mref{lem:dsum}, $$cf = cc_tt\dps (-cR_t(f)) \overset{(t,v)}{\lto_\Pi} cc_tv - cR_t(f) =cg.$$
Since $c\in\nbfk$, we get
$c_tv - R_t(f) = g$ by Item~(\mref{it:feq0}) and so $f\tpi g$.
\end{proof}

\begin{lemma}
Let $V$ be a free $\bfk$-module with a $\bfk$-basis $W$, and let
$\Pi$ be a simple \term-rewriting system on $V$ with respect to $W$. Let $f, g \in V$ and $c\in \nbfk$. Then $f\astarrow_\Pi g$ if and only if $cf\astarrow_\Pi cg $.
\mlabel{lem:cons}
\end{lemma}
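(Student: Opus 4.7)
The proof is a straightforward induction on the length of the rewriting chain, built on Lemma~\mref{lem:cform}, which handles the one-step case. Recall from Remark~\mref{re:fgtri}~(\mref{it:itself}) that $f\astarrow_\Pi g$ means there is a chain $f = f_0 \tpi f_1 \tpi \cdots \tpi f_n = g$ for some $n\geq 0$.

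For the forward direction, I would induct on the length $n$ of such a chain. The base case $n = 0$ gives $f = g$, hence $cf = cg$ and $cf \astarrow_\Pi cg$ by reflexivity. For the inductive step, split a chain of length $n \geq 1$ as $f \tpi f_1 \astarrow_\Pi g$ with the tail of length $n-1$. The one-step rewriting $f \tpi f_1$ actually forces $f \neq f_1$: if the rule used is $t \to v$, then simplicity of $\Pi$ gives $t \notin \Supp(v)$, so the coefficient of $t$ in $f_1 = c_t v - R_t(f)$ vanishes while it equals $c_t \in \nbfk$ in $f$. Hence Lemma~\mref{lem:cform}~(\mref{it:quc}) yields $cf \tpi cf_1$, and the induction hypothesis gives $cf_1 \astarrow_\Pi cg$; concatenation completes this direction.

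For the converse, I would again induct on the length of a chain $cf = h_0 \tpi h_1 \tpi \cdots \tpi h_n = cg$. The base case $n = 0$ gives $cf = cg$, so $c(f-g) = 0$, and Lemma~\mref{lem:cform}~(\mref{it:feq0}) forces $f = g$, hence $f \astarrow_\Pi g$ by reflexivity. For $n \geq 1$, I apply Lemma~\mref{lem:cform}~(\mref{it:geqcg}) to $cf \tpi h_1$ to write $h_1 = cf_1$ for some $f_1 \in V$. The same simplicity argument as above ensures $cf \neq cf_1$; then $c(f - f_1) \neq 0$, so by Lemma~\mref{lem:cform}~(\mref{it:feq0}) we get $f \neq f_1$. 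Now Lemma~\mref{lem:cform}~(\mref{it:quc}), applied in the other direction, yields $f \tpi f_1$. The induction hypothesis applied to the shorter chain $cf_1 \astarrow_\Pi cg$ gives $f_1 \astarrow_\Pi g$, and concatenation produces $f \astarrow_\Pi g$.

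I do not anticipate any genuine obstacle: all four items of Lemma~\mref{lem:cform} together cover every arithmetic manipulation needed. The only subtlety worth flagging is the consistent use of simplicity of $\Pi$ to guarantee $f \neq f_1$ at each inductive step, so that Lemma~\mref{lem:cform}~(\mref{it:quc})---which carries the hypothesis $f \neq g$---is always applicable in both directions.
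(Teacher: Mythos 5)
Your proof is correct and follows essentially the same route as the paper: both reduce the statement to the one-step Lemma~\ref{lem:cform} and induct along the rewriting chain, using items (\ref{it:geqcg}), (\ref{it:feq0}) and (\ref{it:quc}) in exactly the same roles. The only (harmless) difference is how you secure the hypothesis $f\neq f_1$ needed for Lemma~\ref{lem:cform}~(\ref{it:quc}): you derive it directly from simplicity of $\Pi$ at each step, whereas the paper takes a chain of minimal length so that all intermediate terms are automatically distinct.
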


\begin{proof}
($\Rightarrow$) If $f=g$, then $cf =cg$ and $cf \astarrow_\Pi cg$ by Remark~\mref{re:fgtri}~(\mref{it:itself}). Suppose $f\neq g$. Let $n\geq 1$ be the least number such that $f$ rewrites to $g$ in $n$ steps.
Then
\begin{equation}
f=f_0\tpi f_1\tpi \cdots \tpi f_n=g
\mlabel{eq:f0n}
\end{equation}
$\text{ for some distinct } f_i\in V$, $0\leq i\leq n$ and so by Lemma~\mref{lem:cform}~(\mref{it:quc}),
\begin{equation}
 cf=cf_0\tpi cf_1\tpi \cdots \tpi cf_n=cg.
\mlabel{eq:cf0n}
\end{equation}
Hence $cf\astarrow_\Pi cg$.

($\Leftarrow$) If $cf = cg$, then $f=g$ by Lemma~\mref{lem:cform}~(\mref{it:feq0}) and
so $f\astarrow_\Pi g$ by Remark~\mref{re:fgtri}~(\mref{it:itself}). Suppose $cf\neq cg$.
Let $n\geq 1$ be the least number such that $cf$ rewrites to $cg$ in $n$ steps.
Then by Lemma~\mref{lem:cform}~(\mref{it:geqcg}), Eq~(\mref{eq:cf0n}) holds for some distinct $cf_i\in V$, $0\leq i\leq n$. Using Lemma~\mref{lem:cform}~(\mref{it:feq0}), $f_i\in V$ are distinct for $0\leq i\leq n$.
From Lemma~\mref{lem:cform}~(\mref{it:quc}), Eq.~(\mref{eq:f0n}) is valid and so $f\astarrow_\Pi g$.
\end{proof}

\begin{theorem}
Let $V$ be a free $\bfk$-module with a $\bfk$-basis $W$ and let $\Pi$ be a \simple terminating term-rewriting system on $V$ with respect to $W$. Suppose $\leq$ is a well-order on $W$ compatible with $\Pi$. Then
$\Pi$ is confluent if and only if $w$ is locally confluent for any $w\in W$.
\mlabel{thm:linec}
\end{theorem}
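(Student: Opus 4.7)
The forward direction is immediate from Definition~\ref{defn:ARS}: confluence of $\Pi$ means every $f \in V$ is confluent, hence locally confluent, and in particular every $w \in W$ is locally confluent.

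For the backward direction, my strategy is to combine Newman's lemma (Lemma~\ref{lem:newman}), Lemma~\ref{lem:ltcon}, and Lemma~\ref{lem:red} with a transfinite induction along the well-order $\leq$. For each $t \in W$ define the $\bfk$-submodule $V_{\leq t} := \bfk\{w \in W : w \leq t\}$. Since each rule $s \to v$ satisfies $\bar{v} < s$, one checks that rewriting weakly decreases the leading monomial, so $V_{\leq t}$ is closed under $\to_\Pi$ and the restriction $\Pi_{\leq t} := \Pi \cap (W_{\leq t} \times V_{\leq t})$ is a sub-term-rewriting system in the sense of Definition~\ref{defn:subre}, still terminating and still compatible with the restricted well-order. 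Any fork of $\Pi$ is contained in some $\Pi_{\leq t}$, so confluence of $\Pi$ will follow from confluence of every $\Pi_{\leq t}$.

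I prove confluence of $\Pi_{\leq t}$ by well-founded induction on $t$. Assume $\Pi_{\leq t'}$ is confluent for every $t' < t$. By Newman's lemma it suffices to prove local confluence of $\Pi_{\leq t}$, and by Lemma~\ref{lem:ltcon} it suffices to prove local base-confluence. Take a local base-fork $cv_1 \prescript{}{\Pi_{\leq t}}{\tto} cs \to_{\Pi_{\leq t}} cv_2$ with $s \in W_{\leq t}$, $c \in \nbfk$, and rules $s \to v_1, s \to v_2 \in \Pi$. The hypothesis gives local confluence at $s$ in $\Pi$, so there exists $u \in V$ with $v_1 \astarrow_\Pi u$ and $v_2 \astarrow_\Pi u$. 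Compatibility gives $\bar{v_1}, \bar{v_2} < s$, and the weak decrease of leading monomials yields $\bar{u} < s$ as well. Setting $t^\ast := \max(\bar{v_1}, \bar{v_2}, \bar{u}) < s \leq t$, the elements $v_1, v_2, u$ all lie in $V_{\leq t^\ast}$ and the reductions $v_i \astarrow u$ take place entirely within $\Pi_{\leq t^\ast}$. By the inductive hypothesis $\Pi_{\leq t^\ast}$ is confluent, so Lemma~\ref{lem:red} applies with $m = 2$, $(f_1, g_1) = (v_1, u)$, $(f_2, g_2) = (-v_2, -u)$, where $f_2 \downarrow_{\Pi_{\leq t^\ast}} g_2$ holds by Lemma~\ref{lem:cons} applied to the scalar $-1 \in \nbfk$ and $g_1 + g_2 = 0$; this gives $v_1 - v_2 = f_1 + f_2 \astarrow_{\Pi_{\leq t^\ast}} 0$, hence $v_1 - v_2 \astarrow_{\Pi_{\leq t}} 0$. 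Lemma~\ref{lem:cons} then yields $c(v_1 - v_2) \astarrow_{\Pi_{\leq t}} 0$, which is the desired local base-confluence.

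The main obstacle is arranging the induction so that the sub-system invoked in the inductive step is strictly smaller than $\Pi_{\leq t}$. The compatibility $\bar{v} < s$ for rewrite rules, together with the weak monotonicity of leading monomials under arbitrary rewrites, is exactly what forces $v_1, v_2$ and any common reduct $u$ to live strictly below $s$, letting one drop into a sub-system $\Pi_{\leq t^\ast}$ of lower rank where the inductive hypothesis applies. Once this descent is in place, Lemma~\ref{lem:red} is the crucial ingredient converting the joinability $v_1 \downarrow_\Pi v_2$ supplied by local confluence at the basis element $s$ into the stronger identity $v_1 - v_2 \astarrow 0$ required for local base-confluence, and Newman's lemma together with Lemma~\ref{lem:ltcon} then close the argument.
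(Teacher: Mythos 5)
Your proof is correct and follows essentially the same route as the paper's: both arguments restrict to the sub-term-rewriting system supported on basis elements below a given $w\in W$ (using compatibility to show closure and to force the descent), reduce confluence to local base-confluence via Newman's lemma and Lemma~\ref{lem:ltcon}, and use Lemma~\ref{lem:red} to upgrade the joinability $v_1\downarrow_\Pi v_2$ supplied by local confluence at the basis element to $v_1-v_2\astarrow_\Pi 0$. The only difference is presentational: you run a direct transfinite induction over the cutoffs $\Pi_{\leq t}$, while the paper argues by taking a minimal counterexample in the well-order; these are interchangeable.
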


\begin{proof}
($\Rightarrow$) Since $\Pi$ is confluent, $\Pi$ is locally confluent by Definition~\mref{defn:ARS}, that is, every element in $V$ is locally confluent. From $W\subseteq V$, $w$ is locally confluent for any $w\in W$.

($\Leftarrow$) To show $\Pi$ is confluent, it is enough to show $\Pi$ is locally confluent by Lemma~\mref{lem:newman}. In view of Lemma~\mref{lem:ltcon}, we are left to prove that $\Pi$ is locally base-confluent, that is, for any local base-fork
$(cv_1 \prescript{}{\Pi}{\tto}\ cw\to_\Pi cv_2)$, we have $cv_1 - cv_2 \astarrow_\Pi 0$. Suppose for a contrary that $\Pi$ is not locally base-confluent. Then the set
\begin{align*}
\frakC = \{ w\in W \medmid &\text{ there is a local fork base-fork }(cv_1 \prescript{}{\Pi}{\tto}\ cw\to_\Pi cv_2) \\
&\text{ for some $c\in \nbfk, v_1,v_2\in V$ such that } cv_1 - cv_2 \not\astarrow_\Pi 0\}
\end{align*}
is non-empty. Since $\leq$ is a well-order, $\frakC$ has the least element $w$ with respect to $\leq$.
Thus there is a local base-fork
\begin{equation}
(cv_1 \prescript{}{\Pi}{\tto}\ cw\to_\Pi cv_2)\,\text{ with } w\to v_1, w\to v_2\in \Pi
\mlabel{eq:wfork}
\end{equation}
such that
\begin{equation}
cv_1 -cv_2\not\astarrow_\Pi 0\,\text{ for some }\, c\in \nbfk,v_1, v_2\in V.
\mlabel{eq:contr}
\end{equation}
Let
\begin{equation}
Y:= \{y\in W \mid y < w\}\,\text{ and }\, \Pi_{\bfk Y} = \Pi \cap (Y\times \bfk Y).
\mlabel{eq:defny}
\end{equation}
Since $\leq$ is compatible with $\Pi$, we have $\Supp(v_1), \Supp(v_2) \subseteq Y$ and so $Y\neq \emptyset$. Furthermore,  $\Pi_{\bfk Y} \leq \Pi$ is a sub-\term-rewriting system of $\Pi$. Indeed, let $f\astarrow_\Pi g$ with $f\in \bfk Y$, since $\leq$ is compatible with $\Pi$, we get $\bar{g}\leq \bar{f} < w$ and so $g\in \bfk Y$. Thus $\Pi_{\bfk Y}$ is closed under $\Pi$ and so $\Pi_{\bfk Y} \leq \Pi$ by Definition~\mref{defn:subre}.

For any local base-fork $(du_1 \prescript{}{\Pi_{\bfk Y}}{\tto}\ dy\to_{\Pi_{\bfk Y}} d u_2)$ of $\Pi_{{\bfk Y}}$ with $d\in \nbfk$, $y\in Y$ and $u_1, u_2\in \bfk Y$, it induces a local base-fork $(du_1 \prescript{}{\Pi}{\tto}\ dy\to_{\Pi} d u_2)$ by $\Pi_{\bfk Y} \subseteq \Pi$. Since $y\in Y$, we have $y<w$ and $y\notin \frakC$ by the minimality of $w$. So $du_1 - du_2\astarrow_{\Pi} 0$ by the definition of $\frakC$.
Claim
\begin{equation}
f\astarrow_\Pi g \Longrightarrow f\astarrow_{\Pi_{\bfk Y}} g \,\text{ for }\, f,g\in \bfk Y.
\mlabel{eq:cfgin}
\end{equation}
Since $du_1 - du_2\in \bfk Y$ by $u_1, u_2\in \bfk Y$, we have
$du_1 - du_2\astarrow_{\Pi_{\bfk Y}} 0$ by the Claim. Thus $\Pi_{\bfk Y}$ is locally base-confluent and so is locally confluent by Lemma~~\mref{lem:ltcon}. Since $\Pi$ is terminating
and $\Pi_{\bfk Y}\leq \Pi$, $\Pi_{\bfk Y}$ is terminating. Therefore $\Pi_{\bfk Y}$ is confluent by Lemma~\mref{lem:newman}.

For the local fork in Eq.~(\mref{eq:wfork}), it induces a local fork $(v_1 \prescript{}{\Pi}{\tto}\ w\to_\Pi v_2)$ by Lemma~\mref{lem:cform}~(\mref{it:quc}).
Since $w\in W$ is confluent by our hypothesis, it follows that $v_1 \downarrow_\Pi v_2$.
So there is $u\in V$ such that $v_1\astarrow_\Pi u$ and  $v_2\astarrow_\Pi u$ by Definition~\mref{def:ARSbasics}~(\mref{it:joinfg}). From Lemma~\mref{lem:cons},
$$cv_1\astarrow_\Pi cu\,\text{ and }\, cv_2\astarrow_\Pi cu.$$
From $cv_1\in \bfk Y$ and $\Pi_{\bfk Y}\leq \Pi$ is closed under $\Pi$, we have $cu\in \bfk Y$. So by the Claim of Eq.~(\mref{eq:cfgin}),
$$cv_1\astarrow_{\Pi_{\bfk Y}} cu\,\text{ and }\, cv_2\astarrow_{\Pi_{\bfk Y}} cu. $$
This means that $cv_1 \downarrow_{\Pi_{\bfk Y}} cv_2$. Since $\Pi_{\bfk Y}$ is confluent, $cv_1 - cv_2\astarrow_{\Pi_{\bfk Y}} 0 $ by Remark~\mref{re:tozero}. Hence $cv_1 - cv_2\astarrow_{\Pi} 0$ by $\Pi_{\bfk Y}\subseteq \Pi$, contradicting Eq.~(\mref{eq:contr}). We are left to prove the Claim.
\end{proof}

\begin{proof}[proof of Claim]
We want to show Eq.~(\mref{eq:cfgin}). Suppose $f\astarrow_\Pi g$ with $f,g\in \bfk Y$.
If $f=g$, then $f\astarrow_{\Pi_{\bfk Y}} g$ by Remark~\mref{re:fgtri}~(\mref{it:itself}).
Assume $f\neq g$ and let $n\geq 1$ be least number such that
$$f=: f_0 \tpi f_1 \tpi \cdots \tpi f_n:= g \,\text{ with } f_i\in V \text{ are distinct}, 0\leq i\leq n.$$
Since $f_0 =f\in \bfk Y$ and $\Pi$ is compatible with $\leq$, we have $f_i\in \bfk Y$ for $0\leq i\leq n$. We prove the Claim by induction on $n\geq 1$. For the initial step of $n=1$, suppose
$f=f_0 \overset{(t,v)}{\to}_\Pi f_1 =g$ for some $t\to v\in \Pi$. Then $t\in \Supp(f)\subseteq Y$.
This follows that $t< w$ by Eq.~(\mref{eq:defny}). Since $\Pi$ is compatible with $\leq$, we have $\lbar{v} < t < w$ and so $v\in \bfk Y$. Thus $t\to v\in Y\times \bfk Y$ and so $t\to v\in \Pi \cap (Y\times \bfk Y) = \Pi_{\bfk Y}$. This implies that $f=f_0 \overset{(t,v)}{\lto}_{\Pi_{\bfk Y}} f_1 = g$ by $f_0,f_1\in \bfk Y$ and $f_0\overset{(t,v)}{\lto}_\Pi f_1$. For the induction step, we have
$f =f_0 \astarrow_{\Pi_{\bfk Y}} f_1$ and $f_1 \astarrow_{\Pi_{\bfk Y}} f_n =g$ by induction hypothesis and so $f\astarrow_{\Pi_{\bfk Y}} g$, as required.
\end{proof}

\subsection{Term-rewriting systems and Gr\"{o}bner-Shirshov bases}

In this subsection, we supply the relationship between Gr\"{o}bner-Shirshov bases and
term-rewriting systems based on bracketed polynomials. A \term-rewriting system can be assigned
to a given set $S$ of OPIs~\mcite{GGSZ}.

\begin{defn}
Let $\leq$ be a linear order on $\frakM(X)$ and $S \subseteq \bfk\mapm{X}$ monic with respect to $\leq$. Define a \term-rewriting system associated to $S$ as
\begin{equation}
\Pi_{S} := \{\, q|_{\bar{s}}\to  q|_{R(s)} \mid  s=\lbar{s}\dps (-\re{s}) \in S, ~q \in \frakM^\star(X)\,\} \subseteq \mapm{X} \times \bfk\mapm{X}.
\mlabel{eq:rwS}
\end{equation}
\label{def:redsys}
\end{defn}

\vspace{-0.2in}

For notation clarify, we denote $\to_{\Pi_S}$ (resp. $\astarrow_{\Pi_S}$, resp. $\downarrow_{\Pi_S}$) by $\to_S$ (resp. $\astarrow_S$, resp. $\downarrow_{S}$).
In more detail when a specific $s\in S$ is used in one step rewriting, we replace $\to_{S}$ by
$\to_{s}$.
If $\leq$ is a monomial order on $\frakM(X)$, we have $\bar{\stars{q}{R(s)}} = \stars{q}{\bar{R(s)}}< \stars{q}{\bar{s}}$ by $\bar{R(s)} < \bar{s}$. So $\Pi_S$ is compatible with $\leq$ in the sense in Definition~\mref{defn:bfork}~(\mref{it:compor}).

\begin{remark} Let $f,g\in \bfk \frakM(X)$.
\begin{enumerate}
\item If $f\to_S g$, then we can write
$f = c\stars{q}{\bar{s}} \dps f'$ and $g = c\stars{q}{R(s)} +f' $
for some $c\in \nbfk$, $q\in\mstar$, $s\in S$ and $f'\in \bfk\frakM(X)$ by Definition~\mref{def:ARSbasics}~(\mref{it:Trule}). So $f-g = c \stars{q}{\lbar{s} - R(s)} = c \stars{q}{s} \in \Id(S)$
by Lemma~\mref{lem:opideal}.
\mlabel{it:oins}

\item If $f\astarrow_S g$, then $f=: f_0 \to_S f_1 \to_S \cdots
\to_S f_n:= g \,\text{ for some } n\geq 0, f_i\in \bfk \frakM(X), 0\leq i\leq n.$
If $n=0$, then $f=g$ and $f-g\in \Id(S)$. If $n\geq 1$, then by Item~(\mref{it:oins}),
$$f-g = (f_0 -f_1) + (f_1 -f_2) +\cdots + (f_{n-1} -f_{n}) \in \Id(S).$$
\mlabel{it:lins}
\end{enumerate}
\mlabel{re:inideal}
\end{remark}

\begin{lemma}
If $u\dps v$, then $\stars{q}{u}\dps \stars{q}{v}$ for any $q\in \mstar$ and $u,v\in \bfk\frakM(X)$.
\mlabel{lem:qsum}
\end{lemma}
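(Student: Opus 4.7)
The plan is to reduce the claim to the injectivity of the substitution map $\frakM(X)\to\frakM(X)$, $w\mapsto\stars{q}{w}$, and then to unwind the definitions of $\dps$ and $\Supp$.

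First I would write $u$ and $v$ in basis form. Since $\frakM(X)$ is a \bfk-basis of $\bfk\frakM(X)$, there exist finite index sets and scalars $c_i, d_j\in\nbfk$ together with pairwise distinct basis elements $u_i, v_j\in\frakM(X)$ such that
\[
u=\sum_i c_i u_i,\qquad v=\sum_j d_j v_j,
\]
with $\Supp(u)=\{u_i\}$ and $\Supp(v)=\{v_j\}$. By Definition~\mref{def:dps}, the hypothesis $u\dps v$ is precisely the disjointness $\{u_i\}\cap\{v_j\}=\emptyset$. Extending the substitution \bfk-linearly (as in Definition~\mref{def:starbw}), we have
\[
\stars{q}{u}=\sum_i c_i\,\stars{q}{u_i},\qquad \stars{q}{v}=\sum_j d_j\,\stars{q}{v_j}.
\]

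Next I would establish the key ancillary fact: \emph{for a fixed $q\in\mstar$, the map $\frakM(X)\to\frakM(X)$ defined by $w\mapsto\stars{q}{w}$ is injective.} This is immediate from Definition~\mref{def:starbw}, since $q$ contains exactly one occurrence of $\star$: given $\stars{q}{w}$ one can locate the unique position in $q$ occupied by $\star$ and read off $w$ as the bracketed word that has replaced it. In particular, the elements $\stars{q}{u_i}$ are pairwise distinct, as are the elements $\stars{q}{v_j}$, and $\stars{q}{u_i}\neq \stars{q}{v_j}$ whenever $u_i\neq v_j$.

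Consequently $\Supp(\stars{q}{u})=\{\stars{q}{u_i}\}$ and $\Supp(\stars{q}{v})=\{\stars{q}{v_j}\}$. Since $\{u_i\}\cap\{v_j\}=\emptyset$, injectivity gives
\[
\Supp(\stars{q}{u})\cap\Supp(\stars{q}{v})=\{\stars{q}{u_i}\}\cap\{\stars{q}{v_j}\}=\emptyset,
\]
which by Definition~\mref{def:dps} is exactly $\stars{q}{u}\dps\stars{q}{v}$. The only nontrivial point is the injectivity of $w\mapsto\stars{q}{w}$, and even this is essentially built into the definition of a $\star$-bracketed word.
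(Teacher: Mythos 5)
Your proof is correct and follows essentially the same route as the paper: both decompose $u$ and $v$ into basis monomials and reduce the claim to the injectivity of $w\mapsto \stars{q}{w}$ on $\frakM(X)$. The paper phrases this as a proof by contradiction and leaves the injectivity step implicit (it passes from $\stars{q}{u_i}=\stars{q}{v_j}$ to $u_i=v_j$ without comment), whereas you state and justify it explicitly, which is a harmless and arguably clearer variant of the same argument.
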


\begin{proof}
Write $u = \sum_i c_i u_i$ and $v =\sum_j d_j v_j$, where each $c_i,d_j\in \nbfk$ and $u_i, v_j\in \frakM(X)$. Then
$$\stars{q}{u} = \sum_i c_i \stars{q}{u_i}\,\text{ and }\, \stars{q}{v} = \sum_j d_j \stars{q}{v_j}.$$
Suppose for a contrary that $\stars{q}{u}\dps \stars{q}{v}$ fails. Then $\stars{q}{u_i} = \stars{q}{v_j}$ by Definition~\mref{def:dps} for some $i,j$. This implies that $u_i = v_j\in \Supp(u) \cap \Supp(v)$, contradicting that $u\dps v$.
\end{proof}

The following results are characterized in~\mcite{GG}.
For completeness, we record the proof here.

\begin{lemma}
Let $\leq$ be a linear order on $\frakM(X)$ and $S \subseteq \bfk\frakM(X)$ monic with respect to $\leq$.
\begin{enumerate}
\item If $\,\pis$ is confluent, then, $u\in \Id(S)$ if and only if $u\astarrow_{\pis} 0$. \mlabel{it:idealto0}

\item  If $\,\pis$ is confluent, then $\Id(S)\cap \bfk \Irr(S) =0$. \mlabel{it:confcap}

\item If $\,\pis$ is terminating and $\Id(S)\cap \bfk \Irr(S) =0$, then $\pis$ is confluent. \mlabel{it:capconf}

\item If $\,\pis$ is terminating, then $\bfk\frakM(X) = \Id(S) +  \bfk \Irr(S)$, \mlabel{it:tsum}
\end{enumerate}
where $\irr(S)= \frakM(X) \setminus \{ q|_{\lbar{s}} \mid s\in S \}$.
\mlabel{lem:useful}
\end{lemma}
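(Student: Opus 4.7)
The plan is to prove the four items in the order (1) $\Rightarrow$ (2) $\Rightarrow$ (4) $\Rightarrow$ (3), leveraging the earlier results on direct sums, joinability, and the characterization of $\Id(S)$ from Lemma~\ref{lem:opideal}.

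For Item~(\ref{it:idealto0}), the reverse direction is immediate from Remark~\ref{re:inideal}~(\ref{it:lins}): every one-step $\Pi_S$-rewrite stays inside the ideal, so $u\astarrow_{\Pi_S} 0$ forces $u=u-0\in\Id(S)$. The forward direction is where the confluence assumption enters. I would unfold $u\in\Id(S)$ via Lemma~\ref{lem:opideal} as $u=\sum_{i=1}^n c_i\, q_i|_{s_i}$, and then use $s_i=\bar{s_i}\dps(-R(s_i))$ to rewrite each summand as $c_i q_i|_{\bar{s_i}}-c_i q_i|_{R(s_i)}$. The definition of $\Pi_S$ in Eq.~(\ref{eq:rwS}) together with Lemma~\ref{lem:cform}~(\ref{it:quc}) gives $c_i q_i|_{\bar{s_i}}\to_{\Pi_S} c_i q_i|_{R(s_i)}$, hence the two sides are joinable. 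Setting $f_i:=c_i q_i|_{\bar{s_i}}$, $g_i:=c_i q_i|_{R(s_i)}$ for $1\leq i\leq n$, and $f_{n+i}:=-g_i=:g_{n+i}$ for $1\leq i\leq n$, we have $f_i\downarrow_{\Pi_S} g_i$ for all $i$ with $\sum g_i=0$ and $\sum f_i=u$. Lemma~\ref{lem:red} then yields $u\astarrow_{\Pi_S} 0$.

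Item~(\ref{it:confcap}) follows quickly from Item~(\ref{it:idealto0}). Pick $u\in \Id(S)\cap \bfk\Irr(S)$; by Item~(\ref{it:idealto0}) it rewrites to $0$, but $u\in \bfk\Irr(S)$ means no monomial in $\Supp(u)$ equals $q|_{\bar{s}}$ for any $s\in S$ and $q\in\mstar$, so no rule in $\Pi_S$ can be triggered. Hence $u$ is already a normal form, and the only normal form rewriting to $0$ is $0$ itself.

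For Item~(\ref{it:tsum}), I would use termination alone: given $f\in\bfk\frakM(X)$, iterate one-step rewrites to reach a normal form $f'$. Since any one-step rewrite preserves membership in $\bfk\Irr(S)^c$ only up to difference in $\Id(S)$ (again by Remark~\ref{re:inideal}~(\ref{it:lins})), we get $f-f'\in\Id(S)$; meanwhile $f'$ is a normal form, hence $\Supp(f')\subseteq \Irr(S)$, so $f'\in\bfk\Irr(S)$ and $f=(f-f')+f'\in \Id(S)+\bfk\Irr(S)$.

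Item~(\ref{it:capconf}) is the main nontrivial direction and I would handle it through Newman's Lemma~\ref{lem:newman}: since $\Pi_S$ is terminating, it suffices to show local confluence, and in fact the stronger confluence statement reduces to the same argument. Given a fork $g\ \astlarrow_{\Pi_S} f\astarrow_{\Pi_S}h$, termination lets me rewrite $g$ and $h$ further to normal forms $g', h'\in \bfk\Irr(S)$. By Remark~\ref{re:inideal}~(\ref{it:lins}), $g-g',h-h'\in\Id(S)$ and $g-h\in\Id(S)$, so $g'-h'\in \Id(S)\cap\bfk\Irr(S)=0$ by hypothesis, giving $g'=h'$ and hence $g\downarrow_{\Pi_S} h$. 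The only subtlety is ensuring that the two normal forms live in $\bfk\Irr(S)$, which follows directly from the definition of $\Irr(S)$ in Eq.~(\ref{eq:Irrs}) together with the fact that a normal form cannot contain a monomial of the shape $q|_{\bar{s}}$; this is a clean bookkeeping check rather than a real obstacle.
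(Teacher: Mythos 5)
Your proposal is correct and follows essentially the same route as the paper: the reverse direction of (1) via Remark~\ref{re:inideal}, the forward direction via Lemma~\ref{lem:opideal} and Lemma~\ref{lem:red}, (2) as a normal-form uniqueness argument, (4) by termination plus Remark~\ref{re:inideal}, and (3) by comparing normal forms whose difference lies in $\Id(S)\cap\bfk\Irr(S)$. The only cosmetic differences are your doubled index set when invoking Lemma~\ref{lem:red} (the paper takes $g_i=0$ directly) and your direct fork argument for (3) in place of the paper's contradiction with two distinct normal forms.
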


\begin{proof} Note that $\bfk \irr(S)$ is precisely the set of
normal forms of $\Pi_S$.

(\mref{it:idealto0}) If $u\astarrow_{\pis} 0$, then $u\in \Id(S)$ by Remark~\mref{re:inideal}~(\mref{it:lins}).
Conversely, let $u\in \Id(S)$. By Eq.~(\mref{eq:repgen}), we have
$$ u = \sum_{i=1}^n c_i q_i|_{s_i}, \, \text{ where }\, c_i\in \nbfk, s_i\in S, q_i\in \frakM^\star(X), 1\leq i\leq n.$$
For each $s_i =\lbar{s_i}\dps (-\re{s_i})$ with $1\leq i\leq n$,
it follows from Lemmas~\mref{lem:dsum} and~\mref{lem:qsum} that
$$c_i\stars{q_i}{s_i} = c_i\stars{q_i}{\lbar{s_i}} \dps (-c_i\stars{q_i}{\re{s_i}})
 \to_{\pis} c_i\stars{q_i}{\re{s_i}} - c_i\stars{q_i}{\re{s_i}}=0 \,
 \text{ and so }\, c_iq_i|_{s_i}\downarrow_{\pis} 0$$
 by Remark~\mref{re:fgtri}~(\mref{it:tdown}).
Since $\pis$ is confluent,
 $u = \sum_{i=1}^n c_i q_i|_{s_i} \astarrow_{\pis} 0$ by Lemma~\mref{lem:red}.

(\mref{it:confcap}) If $\Id(S)\cap \bfk \Irr(S)\neq 0$, let $0\neq w\in \Id(S)\cap \bfk \Irr(S)$. Since $w\in \bfk \Irr(S)$, $w$ is of normal form. On the other hand, from $w\in \Id(S)$ and Item~(\mref{it:idealto0}), we have $w\astarrow_{\pis} 0 $. So $w$ has two normal forms $w$ and $0$, contradicting that $\pis$ is confluent.

(\mref{it:capconf}) Suppose for a contrary that $\pis$ is not confluent. Since $\pis$ is terminating, there is $w\in \bfk\frakM(X)$ such that $w$ has two distinct normal forms, say $u$ and $v$. Thus $u,v\in \bfk\Irr(S)$ and so $u-v\in \bfk\Irr(S)$. Since $w\astarrow_\Pi u$ and $w\astarrow v_\Pi$, we have $w-u, w-v\in \Id(S)$ by Remark~\mref{re:inideal}~(\mref{it:lins}).
Hence $0\neq u-v\in\Id(S) \cap \bfk\Irr(S)$, a contradiction.

(\mref{it:tsum}) Let $w\in \bfk\frakM(X)$, since $\pis$ is terminating, $w$ has a normal form $u\in \bfk \Irr(S)$ and $w\astarrow_\Pi u$. From Remark~\mref{re:inideal}~(\mref{it:lins}), we have $w-u\in \Id(S)$ and so $w\in \Id(S) + \bfk \Irr(S)$.
\end{proof}

\begin{theorem}
Let $\leq$ be a monomial order on $\frakM(X)$ and $S \subseteq \bfk\frakM(X)$ monic with respect to $\leq$. Then the followings are equivalent.
\begin{enumerate}
\item $\pis$ is convergent. \mlabel{it:sconv}

\item $\pis$ is confluent. \mlabel{it:sconf}

\item $\Id(S) \cap \bfk \Irr(S) = 0$. \mlabel{it:scap}

\item $\Id(S) \oplus \bfk \Irr(S) =\bfk \frakM(X)$. \mlabel{it:ssum}

\item $S$ is a Gr\:{o}bner-Shirshov basis in $\bfk\frakM(X)$, \mlabel{it:sgsbasis}

\end{enumerate}
where $\irr(S)= \frakM(X) \setminus \{ q|_{\lbar{s}} \mid s\in S \}$.
\mlabel{thm:convsum}
\end{theorem}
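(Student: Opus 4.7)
The plan is to show all five conditions are equivalent by exploiting the fact that, under the hypothesis that $\leq$ is a \emph{monomial} order, the rewriting system $\Pi_S$ is automatically terminating. This reduces most of the equivalences to direct applications of Lemma~\ref{lem:useful} and Lemma~\ref{lem:cdl}.

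First, I would record termination of $\Pi_S$. The remark following Definition~\ref{def:redsys} already observes that $\Pi_S$ is compatible with the monomial order $\leq$, since for any $q\in \mstar$ and $s=\bar s \dps(-\re{s}) \in S$, one has $\overline{\stars{q}{\re{s}}} = \stars{q}{\overline{\re{s}}} < \stars{q}{\bar s}$. Because $\leq$ is a well-order, any infinite one-step rewriting chain would produce an infinite strictly descending chain of leading monomials, which is impossible. Hence $\Pi_S$ is terminating. From this, the equivalence (\ref{it:sconv}) $\Leftrightarrow$ (\ref{it:sconf}) is immediate from Definition~\ref{defn:ARS}(\ref{it:dconv}).

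Next, I would string together the remaining equivalences. The implication (\ref{it:sconf}) $\Rightarrow$ (\ref{it:scap}) is exactly Lemma~\ref{lem:useful}(\ref{it:confcap}), and the converse (\ref{it:scap}) $\Rightarrow$ (\ref{it:sconf}) is Lemma~\ref{lem:useful}(\ref{it:capconf}), which applies since $\Pi_S$ is terminating. For (\ref{it:scap}) $\Leftrightarrow$ (\ref{it:ssum}), the termination of $\Pi_S$ together with Lemma~\ref{lem:useful}(\ref{it:tsum}) yields the sum decomposition $\bfk\frakM(X) = \Id(S) + \bfk\Irr(S)$; combining with the vanishing intersection promotes this sum to a direct sum, proving (\ref{it:scap}) $\Rightarrow$ (\ref{it:ssum}), while the reverse implication is immediate from the definition of a direct sum.

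Finally, (\ref{it:ssum}) $\Leftrightarrow$ (\ref{it:sgsbasis}) is essentially a restatement of the Composition-Diamond Lemma (Lemma~\ref{lem:cdl}), whose second item says exactly that $S$ is a Gr\"obner-Shirshov basis if and only if $\bfk\frakM(X) = \bfk\Irr(S) \oplus \Id(S)$. I do not anticipate any real obstacle: all the substantive technical content has already been packaged into Lemma~\ref{lem:useful} and Lemma~\ref{lem:cdl}, and the role of the present theorem is only to assemble them once the termination of $\Pi_S$ has been noted. The only step requiring a moment of care is the observation that a monomial order forces termination of $\Pi_S$; everything else is formal.
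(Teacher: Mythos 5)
Your proposal follows exactly the same route as the paper: note termination of $\pis$, get (\ref{it:sconv})$\Leftrightarrow$(\ref{it:sconf}) from Definition~\ref{defn:ARS}, get (\ref{it:sconf})$\Leftrightarrow$(\ref{it:scap}) from Lemma~\ref{lem:useful}~(\ref{it:confcap}) and (\ref{it:capconf}), get (\ref{it:scap})$\Leftrightarrow$(\ref{it:ssum}) from Lemma~\ref{lem:useful}~(\ref{it:tsum}), and get (\ref{it:ssum})$\Leftrightarrow$(\ref{it:sgsbasis}) from the Composition--Diamond Lemma~\ref{lem:cdl}. That assembly is correct and is all the theorem requires.

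The one place you go beyond the paper is also the one place your argument breaks: the justification of termination. You claim an infinite chain $f_0\to_{\pis} f_1\to_{\pis}\cdots$ would yield an infinite \emph{strictly} descending chain of leading monomials. This is false as stated: a one-step rewriting may act on a non-leading monomial $t\in\Supp(f)$ with $t<\bar f$, in which case $\bar{f}$ survives unchanged and $\overline{f_{i+1}}=\overline{f_i}$. So the sequence of leading monomials is only weakly decreasing, and well-foundedness of $\leq$ on $\frakM(X)$ does not immediately forbid an infinite chain. The standard repair (and what underlies the citation to [GGSZ] that the paper relies on) is to use a finer well-founded measure, e.g.\ the multiset $\Supp(f_i)$ ordered by the multiset (or Dershowitz--Manna) extension of $\leq$, which does strictly decrease at each step because the monomial $t$ is replaced by monomials all strictly smaller than $t$. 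Since the paper itself simply cites termination rather than proving it, this is a localized gap in an optional step; the rest of your proof is sound and matches the paper's.
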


\begin{proof}
Since $\leq $ is a monomial order on $\frakM(X)$, $\pis$ is terminating~\mcite{GGSZ}. So Item~(\mref{it:sconv}) and Item~(\mref{it:sconf}) are equivalent.
The equivalence of Item~(\mref{it:sconf}) and Item~(\mref{it:scap})
is followed from Items~(\mref{it:confcap}) and~(\mref{it:capconf}) in Lemma~\mref{lem:useful}.

Clearly, Item~(\mref{it:ssum}) implies Item~(\mref{it:scap}). The converse is employed Item~(\mref{it:tsum}) in Lemma~~\mref{lem:useful}. At last, the equivalence of Item~(\mref{it:ssum}) and Item~(\mref{it:sgsbasis}) is obtained from Lemma~\mref{lem:cdl}.
\end{proof}

\section{A basis of the free averaging algebra}
\mlabel{sec:average}
In this section, we give a basis of the free averaging algebra. We begin with a lemma.

\begin{lemma}
Let $S \subseteq \bfk \frakM(X)$, $q\in \frakM^\star(X)$ and
$\leq$ a linear order on $\frakM(X)$. Then
\begin{enumerate}
%\item If $u \astarrow_s v$ with $s\in S$ and $u, v\in \bfk \frakM(X)$, then $\stars{q}{u} \astarrow_s \stars{q}{v}$. \mlabel{it:qphi}

\item If $u \astarrow_S v$ with $u, v\in \bfk \frakM(X)$, then $\stars{q}{u} \astarrow_S \stars{q}{v}$. \mlabel{it:qs} \mlabel{it:qphi}

\item If $u\downarrow_S v$, then $\stars{q}{u} \downarrow_S \stars{q}{v}$. \mlabel{it:qsf}
\end{enumerate}
\mlabel{lem:qs}
\end{lemma}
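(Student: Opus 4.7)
The plan is to reduce both parts to a single one-step statement and then close under reflexive-transitive closure.

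First I would prove the key one-step claim: if $u \to_S v$ then $\stars{q}{u} \to_S \stars{q}{v}$. By Definition~\ref{def:ARSbasics}~(\ref{it:Trule}) applied to $\Pi_S$, there exist $s \in S$, $p \in \frakM^\star(X)$, $c \in \nbfk$ and the rewrite rule $\stars{p}{\bar{s}} \to \stars{p}{R(s)}$ in $\Pi_S$ such that
\[u = c\,\stars{p}{\bar{s}} \dps \bigl(-R_{\stars{p}{\bar{s}}}(u)\bigr) \quad\text{and}\quad v = c\,\stars{p}{R(s)} - R_{\stars{p}{\bar{s}}}(u).\]
Applying $\stars{q}{\cdot}$ and using Lemma~\ref{lem:qsum}, together with the identity $\stars{q}{\stars{p}{w}} = \stars{(\stars{q}{p})}{w}$ recalled from the remark after Definition~\ref{def:starbw}, I obtain, writing $q' := \stars{q}{p} \in \frakM^\star(X)$,
\[\stars{q}{u} = c\,\stars{q'}{\bar{s}} \dps \bigl(-\stars{q}{R_{\stars{p}{\bar{s}}}(u)}\bigr).\]
Since $\stars{q'}{\bar{s}} \to \stars{q'}{R(s)}$ is itself a rule in $\Pi_S$ by Eq.~(\ref{eq:rwS}), one application of that rule converts $\stars{q}{u}$ into
\[c\,\stars{q'}{R(s)} - \stars{q}{R_{\stars{p}{\bar{s}}}(u)} \;=\; \stars{q}{\,c\,\stars{p}{R(s)} - R_{\stars{p}{\bar{s}}}(u)\,} \;=\; \stars{q}{v},\]
where the first equality uses $\bfk$-linearity of $q|_{(\cdot)}$ and $\stars{q}{\stars{p}{R(s)}} = \stars{q'}{R(s)}$.

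For part~(\ref{it:qphi}), given $u \astarrow_S v$ I chase the definition of the reflexive-transitive closure in Remark~\ref{re:fgtri}~(\ref{it:itself}): either $u = v$ (and then $\stars{q}{u} = \stars{q}{v} \astarrow_S \stars{q}{v}$ trivially), or there is a chain $u = f_0 \to_S f_1 \to_S \cdots \to_S f_n = v$, to which the one-step claim applies termwise, giving $\stars{q}{u} = \stars{q}{f_0} \to_S \stars{q}{f_1} \to_S \cdots \to_S \stars{q}{f_n} = \stars{q}{v}$. A formal induction on $n$ is immediate.

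Part~(\ref{it:qsf}) is then a direct corollary. By Definition~\ref{def:ARSbasics}~(\ref{it:joinfg}), $u \downarrow_S v$ means $u \astarrow_S h$ and $v \astarrow_S h$ for some $h \in \bfk\frakM(X)$. Applying part~(\ref{it:qphi}) to each chain gives $\stars{q}{u} \astarrow_S \stars{q}{h}$ and $\stars{q}{v} \astarrow_S \stars{q}{h}$, hence $\stars{q}{u} \downarrow_S \stars{q}{v}$.

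The only subtle point — and the main thing to verify with care — is the one-step case, specifically that Lemma~\ref{lem:qsum} legitimately lets us apply $\stars{q}{\cdot}$ to the $\dps$ decomposition of $u$ without any cancellation creating spurious equalities between $\stars{q'}{\bar{s}}$ and monomials in $\stars{q}{R_{\stars{p}{\bar{s}}}(u)}$; this is exactly what Lemma~\ref{lem:qsum} guarantees, so no extra argument is needed. Everything else is bookkeeping.
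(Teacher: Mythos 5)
Your proposal is correct and follows essentially the same route as the paper: isolate the one-step case via the decomposition $u = c\,q'|_{\bar{s}} \dps u'$, use Lemma~\ref{lem:qsum} to see that applying $q|_{(\cdot)}$ preserves the direct sum, invoke the composition identity $q|_{p|_{w}} = (q|_{p})|_{w}$ to recognize the resulting rewrite as an instance of a rule in $\Pi_S$, and then induct on the length of the rewriting chain, with part~(\ref{it:qsf}) following immediately. The point you flag as subtle (no spurious cancellation under $q|_{(\cdot)}$) is exactly the role Lemma~\ref{lem:qsum} plays in the paper's argument as well.
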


\begin{proof}
(\mref{it:qphi})
%Write $s = \lbar{s} \dps (-\re{s})$ as in Remark~\mref{re:sdps}.
If $u =v$, then $\stars{q}{u} = \stars{q}{v}$ and $\stars{q}{u} \astarrow_S\stars{q}{v}$ by Remark~\mref{re:fgtri}~(\mref{it:itself}).
Suppose $u\neq v$. Let $m\geq 1$ be the least number such that $u$ rewrites to $v$ in $m$ steps. We prove the result by induction on $m$. For the initial step $m=1$, since $u \to_S v$, we may write
$$u = c\stars{p}{\lbar{s}} \dps u'\,\text{ and }\, v = c\stars{p}{\re{s}} + u'\, \text{ for some }\, c\in \nbfk, s\in S, p\in \frakM^\star(X), u'\in \bfk \frakM(X).$$
Then from Lemma~\mref{lem:qsum},
$$\stars{q}{u} = c\stars{(\stars{q}{p})}{\lbar{s}} \dps \stars{q}{u'} \to_S
 c\stars{(\stars{q}{p})}{\re{s}} +  \stars{q}{u'} = \stars{q}{ c\stars{p}{\re{s}} + u' } = \stars{q}{v}.$$
Assume the result is true for $m \leq n$ and consider the case of $m = n+1\geq 2$.
Then we can write $u \to_S w \astarrow_S v$ for some $u\neq w \in \bfk\frakM(X)$. By the minimality of $m$, we have $w\neq v$. Using induction hypothesis, we get $\stars{q}{u} \astarrow_S \stars{q}{w}$ and $\stars{q}{w} \astarrow_S \stars{q}{v}$. This implies that $\stars{q}{u} \astarrow_S\stars{q}{v}$, as required.

%(\mref{it:qs}) Since $u \astarrow_S v$, we may suppose
%$$ u=:u_0 \to_{s_1} u_1 \to_{s_2} u_2 \to_{s_2}\cdots \to_{s_n}u_n:= v\,\text{ for some }\, u_i\in \bfk\frakM(X),s_i\in S, 0\leq i\leq n.$$
%%
%Then by Item~(\mref{it:qphi}), we have
%$$ \stars{q}{u}= \stars{q}{u_0} \to_{s_1} \stars{q}{u_1} \to_{s_2} \stars{q}{u_2} \to_{s_2}\cdots \to_{s_n} \stars{q}{u_n}= \stars{q}{v}.$$

(\mref{it:qsf}) Since $u\downarrow_S v$, we may suppose by Definition~\mref{def:ARSbasics}~(\mref{it:joinfg})
that $u \astarrow_S w$ and $v \astarrow_S w$
for some $w\in \bfk \frakM(X)$. Then by Item~(\mref{it:qs}), we have $\stars{q}{u} \astarrow_S \stars{q}{w}$ and  $\stars{q}{v} \astarrow_S \stars{q}{w}$. So $\stars{q}{u} \downarrow_S \stars{q}{v}$. This completes the proof.
\end{proof}

The following is a concept finer than subwords, including the information of placements~\mcite{ZheG}.

\begin{defn}
Let $w\in \mapm{X}$ such that
\begin{equation}
\stars{q_1}{u_1}=w=\stars{q_2}{u_2} \ \text{ for some }u_1, u_2\in \mapm{X}, q_1, q_2\in \frakM^\star(X).
\label{eq:plas}
\end{equation}
\noindent
The two \plas $(u_1,q_1)$ and $(u_2,q_2)$ are called
\begin{enumerate}
\item
\emph{ separated} if there exist $p \in \frakM^{\star_1,\star_2}(X)$ and $a,b \in \mapm{X}$ such that $\stars{q_1}{\star_1}=p|_{\star_1,\,b}$, $\stars{q_2}{\star_2}=p|_{a,\,\star_2}$, and $w=p|_{a,\,b}$;
\mlabel{it:bsep}

\item \emph{ nested} if there exists $q \in \frakM^{\star}(X)$ such that either $q_2=\stars{q_1}q$ or $q_1=\stars{q_2}q$;
\mlabel{it:bnes}

\item  \emph{ intersecting} if there exist $q \in \frakM^{\star}(X)$ and  $a, b, c \in \frakM(X)\backslash\{1\}$ such that $w=q|_{abc}$ and either
\begin{enumerate}
\item $ q_1=q|_{\star c}$ and $q_2=q|_{a\star}$; or

\item $q_1=q|_{a\star}$ and $q_2=q|_{\star c}$.
\end{enumerate}
\mlabel{it:bint}
\end{enumerate}
\mlabel{defn:bwrel}
\end{defn}

\begin{lemma}~{\rm \cite[Thm.~4.11]{ZheG}}
Let $w\in\frakM(X)$. For any two \plas $(u_1, q_1)$ and $(u_2,q_2)$ in $w$, exactly one of the following is true$\,:$
\begin{enumerate}
\item
$(u_1, q_1)$ and $(u_2,q_2)$ are separated$\,;$
\item
$(u_1, q_1)$ and $(u_2,q_2)$ are nested$\,;$
\item
$(u_1, q_1)$ and $(u_2,q_2)$ are intersecting. \mlabel{it:inter}
\end{enumerate}
\mlabel{lem:thrrel}
\end{lemma}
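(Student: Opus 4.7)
The plan is to prove the trichotomy by structural induction on $w\in\frakM(X)$, using the depth $\dep(w)$ as the induction parameter. When $\dep(w) = 0$, $w\in M(X)$ is a plain word and both $q_i\in M(X\sqcup\{\star\})$; the claim reduces to the classical trichotomy for two intervals in $\{1,\ldots,|w|\}$, which are either disjoint (giving separated), one contained in the other (giving nested), or properly overlapping (giving intersecting), with exactly one holding.

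For the inductive step I use the unique factorisation $w = w_1\cdots w_m$ with $w_i\in X\sqcup\lc\frakM(X)\rc$. A short induction using Lemma~\mref{lem:qubra} will show that every placement $(u,q)$ in $w$ is of exactly one of two types: (T) the $\star$ of $q$ sits at the top level, so $q = w_1\cdots w_{a-1}\,\star\,w_{b+1}\cdots w_m$ and $u = w_a\cdots w_b$ for some $1\le a\le b\le m$; or (B) the $\star$ sits inside some bracket $w_k = \lc v_k\rc$, so $q = w_1\cdots w_{k-1}\lc q^*\rc w_{k+1}\cdots w_m$ with $(u,q^*)$ a placement in $v_k$. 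I then case-split on the combinations (T,T), (T,B) and (B,B) for the two placements.

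The (T,T) case reduces to the interval trichotomy for $[a_i,b_i]\subseteq\{1,\ldots,m\}$. In (T,B), say $(u_1,q_1)$ is of type (T) and $(u_2,q_2)$ of type (B) at bracket index $k_2$: if $k_2\notin[a_1,b_1]$, then one directly constructs $p\in\frakM^{\star_1,\star_2}(X)$ (placing $\star_1$ in the top-level slot and $\star_2$ inside $w_{k_2}$) which shows separated; if $k_2\in[a_1,b_1]$, then $q_2 = q_1|_q$ with $q = w_{a_1}\cdots w_{k_2-1}\lc q_2^*\rc w_{k_2+1}\cdots w_{b_1}$, giving nested. In (B,B), if the two bracket indices differ, a direct construction yields separated; if they coincide at some index $k$, both $(u_i,q_i^*)$ are placements in $v_k$, whose depth is strictly less than $\dep(w)$, so the inductive hypothesis supplies the trichotomy in $v_k$. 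Each witness (the two-hole context $p$ for separated, the $\star$-context $q$ for nested, or the factorisation $v_k = q'|_{abc}$ for intersecting) lifts to $w$ by wrapping with the outer context $w_1\cdots w_{k-1}\lc\cdot\rc w_{k+1}\cdots w_m$, automatically preserving the non-triviality conditions $a,b,c\ne 1$.

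For the ``exactly one'' clause, mutual exclusivity is inherited at each level from the uniqueness of the decomposition $w = w_1\cdots w_m$ in $\frakM(X)$: any configuration that were simultaneously separated and nested, or nested and intersecting, etc., would force two incompatible top-level or bracket-level positionings of the $\star$ in some $q_i$, contradicting the unique parsing. The main obstacle I anticipate is the explicit bookkeeping in the (B,B) same-index lifting step, where each of the three witness configurations must be transported from $v_k$ to $w$ through the outer bracket, and, in the intersecting case, the non-triviality conditions on $a,b,c$ must be tracked; everything else is routine case chasing once the (T)/(B) classification of placements is established.
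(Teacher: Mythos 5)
The paper does not prove this lemma at all: it is imported verbatim as \cite[Thm.~4.11]{ZheG}, so there is no in-paper argument to compare yours against. Judged on its own, your reconstruction by structural induction on $\dep(w)$, with the (T)/(B) classification of where the $\star$ of $q_i$ sits relative to the unique factorisation $w=w_1\cdots w_m$, is sound and is in the same spirit as the recursive analysis in the cited source; the (T,T) reduction to interval trichotomy, the (T,B) split on whether $k_2\in[a_1,b_1]$, and the (B,B) lift through the common bracket are all correct, and the witnesses you name do verify the definitions in Definition~\mref{defn:bwrel}, including preservation of $a,b,c\neq 1$ under the lift. Two points need tightening. First, your type (T) normal form assumes $1\le a\le b\le m$, which silently excludes placements with $u_i=1$ (the $\star$ inserted between consecutive factors); these do occur in $\frakM^\star(X)$, and for them the trichotomy degenerates (an empty subword can never be intersecting since $u=ab$ with $a,b\neq 1$ forces $u\neq 1$), so either handle this case explicitly or note it is irrelevant for the paper's application, where the $u_i$ are leading monomials and hence nonempty. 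Second, the exclusivity clause is the only genuinely delicate part, because all three relations are existentially quantified over witnesses; your appeal to ``incompatible positionings of the $\star$'' is the right idea but should be grounded in the unique factorisation of elements of $\frakM(X\sqcup\{\star\})$ (the $\star$ of $q_2$ occupies a uniquely determined leaf of the parse tree of $w$, and each of the three relations pins that leaf into a different region relative to the leaf of $q_1$), rather than left as an assertion. With those two repairs the argument is complete.
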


Now we fix some notations which will be used through out the remainder of the paper.
For any $u\in \frakM(X)$, define recursively $\lc u\rc^{(1)}:= \lc u\rc$ and
$\lc u\rc^{(k+1)}:= \lc \lc u\rc^{(k)}\rc$ for $k\geq 1$.
Recall from Example~\mref{ex:avera} that
\begin{equation*}
\phi(x_1, x_2) := \lc x_1\rc \lc x_2\rc - \lc \lc x_1\rc x_2\rc \,\text{ and }\,
\psi(x_1, x_2) := \lc x_1\lc x_2\rc\rc - \lc \lc x_1\rc x_2\rc \\
\end{equation*}
are the OPIs defining the averaging operator.
Let $\leq$ be a well-order on $X$ such that $x_1< x_2$. Then $\leq$ can be extended to
the monomial order $\leq_\db$ on $\frakM(X)$~\mcite{GGSZ}, which will be used through out in the remainder
of the paper.
With respect to $\leq_\db$, we have
\begin{equation}
\begin{aligned}
\lbar{\phi(x_1, x_2) } =& \lc x_1\rc \lc x_2\rc,\quad
\re{\phi(x_1, x_2)} =  \lc \lc x_1\rc x_2\rc,\\
\lbar{\psi(x_1, x_2) }=&  \lc x_1\lc x_2\rc\rc,\quad
\re{\psi(x_1, x_2)}= \lc \lc x_1\rc x_2\rc.
\end{aligned}
\mlabel{eq:phsi}
\end{equation}
The \term-rewriting system associated to  $\phi(x_1, x_2), \psi(x_1, x_2) $ is not confluent.
For example, for the element $\lc \lc x_1\rc\lc x_2\rc \rc\in \frakM(X)$, on the one hand,
$$\lc \lc x_1\rc\lc x_2\rc\rc \to_{\phi(x_1,x_2)} \lc \lc \lc x_1\rc x_2\rc\rc
=\lc \lc x_1\rc x_2\rc^{\two},$$
which is in normal form. On the other hand,
$$\lc \lc x_1\rc\lc x_2\rc\rc \to_{\psi(x_1,x_2)} \lc \lc\lc x_1\rc\rc x_2\rc
= \lc\lc x_1\rc^{\two} x_2\rc,$$
which is in normal form. So the element $\lc \lc x_1\rc\lc x_2\rc\rc$ is not confluent.
For the desired confluence, we need more rewriting rules. Let
\begin{equation}
\varphi(x_1, x_2) := \lc \lc x_1\rc x_2\rc^{\two} - \lc\lc x_1\rc^{\two} x_2\rc\,\text{ and }\, \Phi:=\{ \phi(x_1, x_2),\, \psi(x_1, x_2),\,\varphi(x_1, x_2)\}.
\mlabel{eq:vPhi}
\end{equation}
With respect to $\leq_\db$, we have
\begin{equation}
\lbar{\varphi(x_1, x_2)} = \lc \lc x_1\rc x_2\rc^{\two}  \,\text{ and }\,
\re{\varphi(x_1, x_2)} = \lc\lc x_1\rc^{\two} x_2\rc.
\mlabel{eq:varph}
\end{equation}
Let $u_1, u_2\in \frakM(X)$. Then by Eq.~(\mref{eq:genphi}),
$$\phi(u_1, u_2) = \lc u_1\rc\lc u_2\rc - \lc \lc u_1\rc u_2\rc\in S_\phi(X),$$
and by Lemma~\mref{lem:opideal},
$$\lc \lc u_1\rc\lc u_2\rc\rc - \lc \lc u_1\rc u_2\rc^{\two}
=\stars{\lc \star\rc\,}{\phi(u_1, u_2)}
\in \Id(S_\phi(X))\subseteq \Id(S_{\phi}(X)\cup S_{\psi}(X)).$$
With the same argument,
$$\lc \lc u_1\rc\lc u_2\rc\rc - \lc \lc u_1\rc^{\two}u_2\rc
= \psi(\lc u_1\rc, u_2) \in S_\psi(X)\subseteq \Id(S_\psi(X))
\subseteq \Id(S_{\phi}(X)\cup S_{\psi}(X)).$$
This implies that
\begin{align*}
& \varphi(u_1, u_2)= \lc \lc u_1\rc u_2\rc^{\two} - \lc \lc u_1\rc^{\two}u_2\rc \\
=& \lc \lc u_1\rc\lc u_2\rc\rc - \lc \lc u_1\rc^{\two}u_2\rc -
(\lc \lc u_1\rc\lc u_2\rc\rc - \lc \lc u_1\rc u_2\rc^{\two}) \in
\Id(S_{\phi}(X)\cup S_{\psi}(X))
\end{align*}
and so $\Id(S_\varphi(X)) \subseteq \Id(S_{\phi}(X)\cup S_{\psi}(X))$. Hence by Eqs.~(\mref{eq:genPhi}) and~(\mref{eq:vPhi}),
\begin{equation}
\Id(S_\Phi(X)) = \Id(S_{\phi}(X)\cup S_{\psi}(X)).
\mlabel{eq:idealeq}
\end{equation}

\begin{remark}
If $u_2 = 1$, then $\varphi(u_1, u_2)$ degenerates to
\begin{equation*}
\varphi(u_1, u_2) = \lc \lc u_1\rc u_2\rc^{\two} - \lc \lc u_1\rc^{\two}u_2\rc =
\lc u_1\rc^{\three} - \lc u_1\rc^{\three} = 0.
\end{equation*}
So we always assume $u_2\neq 1$ in $\varphi(u_1, u_2)$. This is our running hypothesis in
the remainder of the paper.
\mlabel{re:degen}
\end{remark}

\begin{remark} From Eqs.~(\mref{eq:phsi}) and~(\mref{eq:varph}), we have
\begin{enumerate}
\item for any $\firstx\in \Phi$ and $u_1,u_2\in \frakM(X)$, $\re{\firstu} \in \frakM(X)$ is a monomial. \mlabel{it:monom}

\item for any $u_1,u_2\in \frakM(X)$, the breadth $|\lbar{\phi(u_1,u_2)}| = 2$ and $|\lbar{\psi(u_1,u_2)}|= |\lbar{\varphi(u_1,u_2)}|= 1$. \mlabel{it:bre12}
\end{enumerate}
\mlabel{re:lebr}
\end{remark}

Recall $\Phi$ is fixed in Eq.~(\mref{eq:vPhi}).
In Eq.~(\mref{eq:rwS}), taking $S = S_\Phi(X)$ defined in Eq.~(\mref{eq:genPhi}),
 we get a \term-rewriting system associated to $\Phi$ (with respect to $\leq_\db$)
\begin{equation}
\begin{aligned}
\Pi_\Phi:= \Pi_{S_\Phi(X)}=\{\,q|_{\bar{\first(u_1,u_2)}}\to q|_{\re{\first(u_1,u_2)}} \mid \alpha(x_1, x_2)\in \Phi, q \in \frakM^\star(X), u_1,u_2\in\frakM(X) \}.
\end{aligned}
\mlabel{eq:T1}
\end{equation}
For notation clarity, we abbreviate $\to_{\firstu}$ as $\to_\first$.
Now we are in the position to consider the confluence of the \term-rewriting system $\Pi_\Phi$. By Theorem~\mref{thm:linec}, we only need to consider the confluence of basis
elements. Take a local fork of a basis element $w\in\frakM(X)$:
$$(\stars{q_1}{\re{\firstu}} \prescript{}{\first\,}\tto  \stars{q_1}{\lbar{\firstu}}= w= \stars{q_2}{\lbar{\secondv}}\to_\second \stars{q_2}{\re{\secondv}}),$$
where
$$\firstx, \secondx\in \Phi,\, u_i,v_i\in\frakM(X), \, i=1,2.$$
According to Lemma~\mref{lem:thrrel}, the two placements
$(\lbar{\firstu}, q_1)$ and $(\lbar{\secondv}, q_2)$ are separated, or intersecting, or nested.
We consider firstly the former two cases.

\begin{lemma}
Let $\first(x_1, x_2), \second(x_1, x_2)\in \Phi$ and $\stars{q_1}{{\lbar{\firstu}}} = \stars{q_2}{\lbar{\secondv}}$ for some $q_1, q_2\in \frakM^\star(X)$ and
$u_i,v_i\in \frakM(X)$, $i=1,2$. If the placements $({\lbar{\firstu}}, q_1)$ and $(\lbar{\secondv}, q_2)$ are separated, then $\stars{q_1}{\re{\firstu}} \downarrow_{\Phi} \stars{q_2}{\re{\secondv}}$.
\mlabel{lem:sepe}
\end{lemma}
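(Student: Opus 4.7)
The plan is to exploit that separated placements occupy disjoint sites in $w$, so each redex survives under rewriting at the other and the two one-step rewrites converge to a common monomial. First, I would unfold Definition~\mref{defn:bwrel}~(\mref{it:bsep}) to obtain $p \in \frakM^{\star_1,\star_2}(X)$ and $a,b \in \frakM(X)$ with $\stars{q_1}{\star_1}=p|_{\star_1,b}$, $\stars{q_2}{\star_2}=p|_{a,\star_2}$, and $w = p|_{a,b}$. Matching this against $w=\stars{q_1}{\overline{\firstu}}=\stars{q_2}{\overline{\secondv}}$ immediately forces $a=\overline{\firstu}$ and $b=\overline{\secondv}$.

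Next, I would invoke Remark~\mref{re:lebr}~(\mref{it:monom}) to note that both $\re{\firstu}$ and $\re{\secondv}$ are single monomials in $\frakM(X)$, so that
\[
\stars{q_1}{\re{\firstu}} \;=\; p|_{\re{\firstu},\overline{\secondv}} \qquad \text{and} \qquad \stars{q_2}{\re{\secondv}} \;=\; p|_{\overline{\firstu},\re{\secondv}}
\]
are themselves bona fide monomials of $\frakM(X)$. Setting $q' := p|_{\re{\firstu},\star} \in \frakM^\star(X)$, the rule $q'|_{\overline{\secondv}} \to q'|_{\re{\secondv}}$ lies in $\Pi_\Phi$ by Eq.~(\mref{eq:T1}), so $\stars{q_1}{\re{\firstu}}$ one-step rewrites via $\second$ to $p|_{\re{\firstu},\re{\secondv}}$. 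Symmetrically, with $q'' := p|_{\star,\re{\secondv}}$, the element $\stars{q_2}{\re{\secondv}}$ rewrites via $\first$ to the same $p|_{\re{\firstu},\re{\secondv}}$. This yields $\stars{q_1}{\re{\firstu}} \downarrow_\Phi \stars{q_2}{\re{\secondv}}$ by Definition~\mref{def:ARSbasics}~(\mref{it:joinfg}).

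The only point requiring genuine care is verifying that the intermediate expressions $p|_{\re{\firstu},\overline{\secondv}}$ and $p|_{\overline{\firstu},\re{\secondv}}$ really are monomials of $\frakM(X)$ rather than general $\bfk$-linear combinations, so that the second rewrite can be performed as a literal one-step rewriting in $\Pi_\Phi$; this is precisely what Remark~\mref{re:lebr}~(\mref{it:monom}) delivers. The rest amounts to routine manipulation of two-hole $\star_1,\star_2$-substitutions via the factorisation recorded in the remark following Definition~\mref{def:starbw}.
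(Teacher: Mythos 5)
Your proposal is correct and follows essentially the same route as the paper's proof: both unfold the definition of separated placements to obtain the two-hole word $p$ with $\stars{q_1}{\star_1}=\stars{p}{\star_1,\lbar{\secondv}}$ and $\stars{q_2}{\star_2}=\stars{p}{\lbar{\firstu},\star_2}$, invoke Remark~\mref{re:lebr}~(\mref{it:monom}) to ensure the intermediate expressions are genuine monomials, and close the fork at the common word $\stars{p}{\re{\firstu},\re{\secondv}}$ via one rewrite on each side. Your extra care in making the one-hole contexts $q'=p|_{\re{\firstu},\star}$ and $q''=p|_{\star,\re{\secondv}}$ explicit is a harmless refinement of what the paper leaves implicit.
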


\begin{proof}
In view of Definition~\ref{defn:bwrel}~(\mref{it:bsep}), there exists $p\in \frakM^{\star_1,\star_2}(X)$ such that
$$\stars{q_1}{\star_1}= \stars{p}{\star_1,\,\lbar{\secondv}}\,\text{ and }\,
\stars{q_2}{\star_2}= \stars{p}{\lbar{\firstu},\,\star_2}.
$$
On the one hand,
\begin{equation}
\begin{aligned}
\stars{q_1}{\re{\firstu}} = \stars{p}{\re{\firstu},\, \lbar{\secondv}} \to_{\second} \stars{p}{\re{\firstu},\, \re{\secondv}},
\end{aligned}
\mlabel{eq:seq1}
\end{equation}
where the last step employs the facts that $\re{\firstu}$ is a monomial by Remark~\mref{re:lebr}~(\mref{it:monom})
and so is $\stars{p}{\re{\firstu},\, \lbar{\secondv}}$.
On the other hand,
\begin{equation}
\begin{aligned}
\stars{q_2}{\re{\secondv}} = \stars{p}{\lbar{\firstu},\, \re{\secondv}} \to_{\first} \stars{p}{\re{\firstu},\, \re{\secondv}}.
\end{aligned}
\mlabel{eq:seq2}
\end{equation}
Comparing Eqs~(\mref{eq:seq1}) and~(\mref{eq:seq2}), we conclude that $\stars{q_1}{\re{\firstu}} \downarrow_{\Phi} \stars{q_2}{\re{\secondv}}$.
\end{proof}

\begin{lemma}
Let $\firstx, \secondx\in \Phi$ and $\stars{q_1}{\lbar{\first(u_1, u_2)}}= \stars{q_2}{\lbar{\second(v_1, v_2)}}$ for some $q_1, q_2\in \mstar$ and $u_i, v_i\in \frakM(X)$, $i=1,2$. If the placements $(\lbar{\first(u_1, u_2)}, q_1)$ and $(\lbar{\second(v_1, v_2)}, q_2)$ are intersecting, then $\stars{q_1}{\re{\firstu}} \downarrow_{\Phi} \stars{q_2}{\re{\secondv}}$.
\mlabel{lem:noint}
\end{lemma}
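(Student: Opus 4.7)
The plan is to first use a breadth argument to reduce the problem to the single pair $(\first,\second) = (\phi,\phi)$, and then to close the remaining critical pair by an explicit short computation using only~$\phi$.

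By Definition~\mref{defn:bwrel}~(\mref{it:bint}), an intersecting configuration forces $w = q|_{abc}$ with $a,b,c \in \frakM(X)\setminus\{1\}$ and $\{\lbar{\firstu}, \lbar{\secondv}\} = \{ab, bc\}$, so both leading monomials have breadth at least~$2$. Among the OPIs in $\Phi$, only $\phi$ has leading monomial of breadth~$2$ by Remark~\mref{re:lebr}~(\mref{it:bre12}); hence $\first = \second = \phi$, and no fork involving $\psi$ or $\varphi$ can arise in the intersecting case.

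Focus on case~(a) of Definition~\mref{defn:bwrel}~(\mref{it:bint}), where $q_1 = q|_{\star c}$ and $q_2 = q|_{a\star}$. Matching $\lc u_1\rc\lc u_2\rc = ab$ and $\lc v_1\rc\lc v_2\rc = bc$, and using that each of $\lc u_i\rc, \lc v_i\rc$ has breadth~$1$, pins down $a = \lc u_1\rc$, $b = \lc u_2\rc = \lc v_1\rc$, $c = \lc v_2\rc$; in particular $u_2 = v_1$. Substituting the formulas from Eq.~(\mref{eq:phsi}), the two sides of the fork become
\begin{align*}
\stars{q_1}{\re{\phi(u_1,u_2)}} &= q|_{\lc\lc u_1\rc u_2\rc\,\lc v_2\rc}, \\
\stars{q_2}{\re{\phi(v_1,v_2)}} &= q|_{\lc u_1\rc\,\lc\lc u_2\rc v_2\rc}.
\end{align*}
Each inner expression is again an instance of the pattern $\lc x_1\rc\lc x_2\rc$; a single application of $\phi$ to the first (with $x_1 = \lc u_1\rc u_2$, $x_2 = v_2$) and two successive applications of $\phi$ to the second (first with $x_1 = u_1$, $x_2 = \lc u_2\rc v_2$, then with $x_1 = u_1$, $x_2 = u_2$ inside the outer bracket), propagated through the context $q$ via Lemma~\mref{lem:qs}~(\mref{it:qphi}), bring both to the common bracketed word $q|_{\lc\lc\lc u_1\rc u_2\rc v_2\rc}$. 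This gives $\stars{q_1}{\re{\phi(u_1,u_2)}} \downarrow_\Phi \stars{q_2}{\re{\phi(v_1,v_2)}}$. Case~(b) is the mirror image obtained by swapping the roles of the two placements, so the same conclusion holds.

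The main technical point is really the breadth reduction that eliminates every pair except $(\phi,\phi)$; once that is in place the joinability is a short, one-rule diamond closure and requires no further case analysis.
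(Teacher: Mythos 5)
Your proposal is correct and follows essentially the same route as the paper: the breadth argument (forced by $a,b,c\neq 1$ in the intersecting configuration) reduces everything to $\first=\second=\phi$, the matching $a=\lc u_1\rc$, $b=\lc u_2\rc=\lc v_1\rc$, $c=\lc v_2\rc$ is identical, and the one-step versus two-step $\phi$-rewriting closing the diamond at $q|_{\lc\lc\lc u_1\rc u_2\rc v_2\rc}$ is exactly the paper's computation, propagated through the context by Lemma~\mref{lem:qs}.
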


\begin{proof}
If the two placements $(\lbar{\firstu}, q_1)$ and $(\lbar{\secondv}, q_2)$ are intersecting,
by symmetry, we may assume that Item~$(\mref{it:bint})~({\rm i})$ in Definition~\mref{defn:bwrel} holds. Then $q_1 \neq q_2$, because if $q_1 = q_2$, then $\star c = a \star$, a contradiction.
So
\begin{equation}
\stars{q}{\lbar{\firstu}\, c } = \stars{q_1}{\lbar{\firstu}} = \stars{q_2}{\lbar{\secondv}} = \stars{q}{a\, \lbar{\secondv}} = \stars{q}{abc}
\mlabel{eq:qabcfs}
\end{equation}
and
$$ \lbar{\firstu}\, c  = a\, \lbar{\secondv} = abc.$$
This implies that
\begin{equation}
\lbar{\firstu} = ab\,\text{ and }\, \lbar{\secondv}= bc.
\mlabel{eq:intabc}
\end{equation}
If the breadth $|\lbar{\firstu}| =1$, then $a =1$ or $b=1$, both contradicting that $a,b\neq 1$ in Definition~\mref{defn:bwrel}~(\mref{it:bint}). Similarly, if the breadth $|\lbar{\secondv}| =1$, then $b =1$ or $c=1$, again a contradiction. So $|\lbar{\firstu}| \neq 1$ and
$|\lbar{\secondv}| \neq 1$. Hence by Remark~\mref{re:lebr}~(\mref{it:bre12}),
$$\firstx = \secondx= \phi(x_1, x_2) =
\lc x_1\rc \lc x_2\rc - \lc \lc x_1\rc x_2\rc.$$
From Eq.~(\mref{eq:intabc}), we have
$$ \lbar{\firstu} = \lc u_1\rc \lc u_2\rc = ab\,\text{ and }\, \lbar{\secondv} = \lc v_1\rc \lc v_2\rc = bc$$
and so $\lc u_1\rc = a$, $\lc u_2\rc = b = \lc v_1\rc$, $u_2 = v_1$ and
$  \lc v_2\rc  =c$. From Eqs.~(\mref{eq:phsi}) and~(\mref{eq:varph}),
$$\re{\firstu} c = \re{\phi(u_1, u_2)} c =
\lc \lc u_1 \rc u_2\rc \lc v_2\rc \tphi
\lc\lc \lc u_1 \rc u_2\rc v_2\rc $$
and
$$ a \re{\secondv} = a\re{\phi(v_1, v_2)}=a\re{\phi(u_2, v_2)}
 = \lc u_1\rc \lc \lc u_2\rc v_2\rc \tphi
\lc \lc u_1\rc \lc u_2\rc v_2\rc  \tphi
\lc\lc \lc u_1\rc u_2\rc v_2\rc.
 $$
So $\re{\firstu} c \downarrow_\Phi a \re{\secondv}$. This follows from Eq.~(\mref{eq:qabcfs}) and Lemma~\mref{lem:qs}~(\mref{it:qsf}) that
$$\stars{q_1}{\re{\firstu}} = \stars{q}{\re{\firstu}\, c} \downarrow_\Phi \stars{q}{a\, \re{\secondv}} =  \stars{q_2}{\re{\secondv}},$$
as required.
\end{proof}

Next, let us turn to consider the nested case. We need the following lemmas.
The first is on the leading monomials of OPIs in $\Phi$.

\begin{lemma}
Let $\first(x_1, x_2), \second(x_1, x_2)\in \Phi$ and $\lbar{\first(u_1, u_2)} = \lbar{\second(v_1, v_2)}$ for some $u_i, v_i\in \frakM(X)$, $i=1,2$. Then exactly one of the
following is true:
\begin{enumerate}
\item $\first(x_1, x_2)= \second(x_1, x_2)$, $u_1 = v_1$, $u_2=v_2$; \mlabel{it:ls1}

\item $\firstx = \psi(x_1, x_2)$, $\secondx = \varphi(x_1, x_2)$,
$u_1 = 1$, $u_2 = \lc v_1\rc v_2$; \mlabel{it:ls2}

\item $\firstx = \varphi(x_1, x_2)$, $\secondx = \psi(x_1, x_2)$,
$v_1=1$, $ v_2 = \lc u_1\rc u_2$. \mlabel{it:ls3}
\end{enumerate}
\mlabel{lem:noteq}
\end{lemma}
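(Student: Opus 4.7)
The plan is a case analysis on the pair $(\first,\second)\in\Phi\times\Phi$, exploiting two structural features recorded in Remark~\ref{re:lebr}: (i) every $\re{\first(u_1,u_2)}$ is a monomial, and, more importantly here, (ii) the leading monomial $\lbar{\phi(u_1,u_2)}=\lc u_1\rc\lc u_2\rc$ has breadth $2$, whereas $\lbar{\psi(u_1,u_2)}=\lc u_1\lc u_2\rc\rc$ and $\lbar{\varphi(u_1,u_2)}=\lc\lc u_1\rc u_2\rc^{(2)}$ both have breadth $1$. So I would first knock out the four mixed cases involving $\phi$ together with $\psi$ or $\varphi$: in each such situation the hypothesis $\lbar{\firstu}=\lbar{\secondv}$ forces a breadth-$2$ word to equal a breadth-$1$ word, contradicting the uniqueness of the decomposition in Eq.~(\ref{eq:udecom}). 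This leaves me with five cases to analyze: the three ``diagonal'' ones $\first=\second\in\{\phi,\psi,\varphi\}$ and the two off-diagonal cases $(\psi,\varphi)$ and $(\varphi,\psi)$.

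For $\first=\second=\phi$, the equation $\lc u_1\rc\lc u_2\rc=\lc v_1\rc\lc v_2\rc$ is an equality of breadth-$2$ words in $\frakM(X)$, so the unique decomposition from Eq.~(\ref{eq:udecom}) gives $\lc u_1\rc=\lc v_1\rc$ and $\lc u_2\rc=\lc v_2\rc$, whence $u_1=v_1$ and $u_2=v_2$. For $\first=\second=\psi$, I peel off the outer bracket to get $u_1\lc u_2\rc=v_1\lc v_2\rc$; the rightmost factor in the decomposition~(\ref{eq:udecom}) of each side is $\lc u_2\rc$ on the left and $\lc v_2\rc$ on the right, giving $u_2=v_2$ and then $u_1=v_1$. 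For $\first=\second=\varphi$, I peel off two outer brackets in turn and argue analogously using the leftmost factor, concluding $u_1=v_1$ and $u_2=v_2$. These three cases produce conclusion~(\ref{it:ls1}).

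For $\first=\psi$, $\second=\varphi$, the assumption $\lc u_1\lc u_2\rc\rc=\lc\lc\lc v_1\rc v_2\rc\rc$ yields, after removing the outermost bracket, $u_1\lc u_2\rc=\lc\lc v_1\rc v_2\rc$. The right-hand side has breadth $1$, so in the breadth decomposition~(\ref{eq:udecom}) of the left-hand side I must have $u_1=1$ and $\lc u_2\rc=\lc\lc v_1\rc v_2\rc$, i.e.\ $u_2=\lc v_1\rc v_2$. This gives conclusion~(\ref{it:ls2}); conclusion~(\ref{it:ls3}) follows from the symmetric case $\first=\varphi$, $\second=\psi$ by interchanging the roles of $(u_1,u_2)$ and $(v_1,v_2)$. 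Finally, to verify the ``exactly one'' clause I just observe that~(\ref{it:ls1}) requires $\first=\second$ while~(\ref{it:ls2}) and~(\ref{it:ls3}) both require $\first\neq\second$ and moreover specify opposite assignments of $\psi$ and $\varphi$ to $\first,\second$, so the three alternatives are pairwise disjoint.

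The only delicate point — and really the main thing to watch — is that the uniqueness-of-factorization argument in $\frakM(X)$ is applied at the right ``level'': after peeling off outer brackets I am comparing two bracketed words by their breadth decomposition~(\ref{eq:udecom}), and I must check that each of $\lc u_i\rc$, $\lc v_i\rc$ (together with the running hypothesis $u_2\neq 1$, $v_2\neq 1$ from Remark~\ref{re:degen} when $\varphi$ is involved) is genuinely an element of $X\sqcup\lc\frakM(X)\rc$ so that the decomposition is unambiguous. Once that is granted, every step of the analysis is a direct comparison of outermost factors.
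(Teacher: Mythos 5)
Your proposal is correct and follows essentially the same route as the paper's proof: the breadth comparison from Remark~\ref{re:lebr} to exclude the mixed cases involving $\phi$, followed by the unique decomposition of Eq.~(\ref{eq:udecom}) applied after peeling outer brackets in the remaining diagonal and $(\psi,\varphi)$/$(\varphi,\psi)$ cases. The only difference is cosmetic ordering of the case analysis, and your explicit disjointness check for the ``exactly one'' clause is a welcome addition the paper leaves implicit.
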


\begin{proof}
According to whether $\first$ and $\second$ are equal, we have the following cases to consider.

\smallskip\noindent
{\bf Case 1.} $\first(x_1, x_2) = \second(x_1, x_2)$. Then
Items~(\mref{it:ls2}) and (\mref{it:ls3}) fail.
We show Item~(\mref{it:ls1}) is valid.
Consider firstly that $\firstx = \phi(x_1, x_2)$. Then
$$\lc u_1\rc \lc u_2\rc =\lbar{\firstu}=\lbar{\secondv} =
\lc v_1\rc \lc v_2\rc.$$
By the unique decomposition of bracketed words in Eq.~(\mref{eq:udecom}), we have $\lc u_1\rc = \lc v_1\rc$ and
$\lc u_2\rc = \lc v_2\rc$. This implies $u_1 = v_1$ and $u_2 = v_2$.
Consider secondly that $\firstx = \psi(x_1, x_2)$. Then
$\lc  u_1\lc u_2\rc \rc =\lbar{\firstu}=\lbar{\secondv} =\lc v_1\lc v_2\rc\rc $
and so $  u_1\lc u_2\rc = v_1\lc v_2\rc$. This also implies $u_1 = v_1$, $\lc u_2\rc = \lc v_2\rc$ and $u_2 = v_2$. At last, consider $\firstx = \varphi(x_1, x_2)$.
Then $\lc \lc u_1\rc u_2\rc^{\two} =\lbar{\firstu}=\lbar{\secondv}=
\lc \lc v_1\rc v_2\rc^{\two}$ and so $\lc \lc u_1\rc u_2\rc = \lc \lc v_1\rc v_2\rc$.
Thus $\lc u_1\rc u_2 = \lc v_1\rc v_2$ and so $u_1 = v_1$ and $u_2 = v_2$.

\smallskip\noindent
{\bf Case 2.} $\first(x_1, x_2) \neq \second(x_1, x_2)$. Then Item~(\mref{it:ls1})
fails.

Suppose firstly that one of $\first(x_1, x_2)$ and $\second(x_1, x_2)$ is $\phi(x_1, x_2)$. By symmetry, we may let $\first(x_1, x_2) = \phi(x_1, x_2)$. Then $\second(x_1, x_2) \neq \phi(x_1, x_2)$. From Remark~\mref{re:lebr}~(\mref{it:bre12}), $|\lbar{\first(u_1, u_2)}| =|\lbar{\phi(u_1, u_2)}| = 2$ and $|\lbar{\second(v_1, v_2)}|=1$. This implies that $\lbar{\first(u_1, u_2)} \neq \lbar{\second(v_1, v_2)}$, contradicting our hypothesis.
Suppose
$\first(x_1, x_2),\second(x_1, x_2)\neq \phi(x_1, x_2)$. Then we have the following two subcases.

\smallskip\noindent
{\bf Case 2.1.}
 $\first(x_1, x_2) = \psi(x_1, x_2)$ and $\second(x_1, x_2) = \varphi(x_1, x_2)$.
Then Item~(\mref{it:ls3}) fails and
$$ \lc u_1\lc u_2\rc \rc = \lbar{\psi(u_1, u_2)}  =
\lbar{\first(u_1, u_2)}=\lbar{\secondv}= \lbar{\varphi(v_1, v_2)} =
\lc\lc v_1\rc v_2\rc^{\two}.$$
So $ u_1\lc u_2\rc =\lc\lc v_1\rc v_2\rc$. This implies that
$u_1 = 1$, $\lc u_2\rc = \lc \lc v_1\rc v_2\rc$ and $u_2 = \lc v_1\rc v_2$ and so Item~(\mref{it:ls2}) is valid.

\smallskip\noindent
{\bf Case 2.2.} $\first(x_1, x_2) = \varphi(x_1, x_2)$ and $\second(x_1, x_2) = \psi(x_1, x_2)$.
Then Item~(\mref{it:ls2}) fails and
$$ \lc \lc u_1\rc u_2\rc^{\two}= \lbar{\varphi(u_1, u_2)}
= \lbar{\first(u_1, u_2)}=\lbar{\secondv}= \lbar{\psi(v_1, v_2)}
= \lc v_1\lc v_2\rc\rc.$$
This follows that $ \lc \lc u_1\rc u_2\rc =  v_1\lc v_2\rc$.
So $v_1=1$, $ v_2 = \lc u_1\rc u_2$ and Item~(\mref{it:ls3}) is valid.
\end{proof}

%\begin{lemma}
%Let $\first(x_1, x_2)\in \Phi$. If $\lbar{\first(u_1, u_2)} = \lbar{\first(v_1, v_2)}$ for some $u_1, u_2, v_1, v_2\in \frakM(X)$, then $u_1 = v_1$ and $u_2 = v_2$.
%%
%\mlabel{lem:eqres}
%\end{lemma}
%
%\begin{proof}
%Suppose $\lbar{\first(u_1, u_2)} = \lbar{\first(v_1, v_2)}$ for some $u_1, u_2, v_1, v_2\in \frakM(X)$. Consider firstly that $\firstx = \phi(x_1, x_2)$. Then $\lc u_1\rc \lc u_2\rc = \lc v_1\rc \lc v_2\rc$ and so $\lc u_1\rc = \lc v_1\rc$ and $\lc u_2\rc = \lc v_2\rc$. This implies $u_1 = v_1$ and $u_2 = v_2$. Consider secondly that $\firstx = \psi(x_1, x_2)$. Then $\lc \lc u_1\rc u_2\rc = \lc \lc v_1\rc v_2\rc $ and so $\lc u_1\rc u_2= \lc v_1\rc v_2 $. This also implies $u_1 = v_1$ and $u_2 = v_2$. Lastly, consider $\firstx = \varphi(x_1, x_2)$. Then $\lc u_1\lc u_2\rc\rc^{\two} = \lc v_1\lc v_2\rc\rc^{\two}$ and so $\lc u_1\lc u_2\rc\rc = \lc v_1\lc v_2\rc\rc$. Thus $u_1\lc u_2\rc = v_1\lc v_2\rc$ and so $u_1 = v_1$ and $u_2 = v_2$.
%\end{proof}

%\begin{remark}
%Lemma~\mref{lem:noteq} is not true for arbitrary $\first(x_1, x_2),\secondx \in \bfk \frakM(X)$. For example, let $\first(x_1, x_2) =\secondx= x_1x_2$, $u_1 = x$, $u_2 = x^2$ and $v_1 = x^2$, $v_2 = x$. Then
%$$\lbar{\first(u_1, u_2)} = xx^2 = x^3 = x^2x = \lbar{\second(v_1, v_2)}.$$
% %
%But $u_1\neq v_1$ and $u_2\neq v_2$.
%\end{remark}

\begin{lemma}
Let $\first(x_1, x_2), \second(x_1, x_2)\in \Phi$ and $\stars{q_1}{{\lbar{\firstu}}} = \stars{q_2}{\lbar{\secondv}}$ for some $q_1, q_2\in \frakM^\star(X)$ and
$u_i,v_i\in \frakM(X)$, $i=1,2$. If $q_2 = \stars{q_1}{q}$ for some $q\in\mstar$ and $\lbar{\secondv}$ is a subword of $u_1$ or $u_2$, then $\stars{q_1}{\re{\firstu}} \downarrow_{\Phi} \stars{q_2}{\re{\secondv}}$.
\mlabel{lem:incl0}
\end{lemma}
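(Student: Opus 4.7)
The plan is to exhibit an explicit common one-step reduct of the two sides of the fork. The key observation is that the nesting $q_2 = \stars{q_1}{q}$ forces $\lbar{\firstu} = \stars{q}{\lbar{\secondv}}$, so the inner redex $\lbar{\secondv}$ sits strictly inside one of the arguments $u_1, u_2$ of the outer OPI $\first$. First I would pick the index $j\in\{1,2\}$ for which $\lbar{\secondv}$ is a subword of $u_j$ and write $u_j = \stars{p}{\lbar{\secondv}}$ for some $p\in\mstar$. Set $u_j' := \stars{p}{\re{\secondv}}$ and $u_k' := u_k$ for $k\neq j$. A small check, using that $\re{\secondv}$ is a non-identity bracketed monomial by Eqs.~(\mref{eq:phsi}) and~(\mref{eq:varph}) together with the running hypothesis in Remark~\mref{re:degen}, shows $u_2' \neq 1$ whenever $\first = \varphi$, so $\first(u_1', u_2')$ is a legitimate instance of the OPI.

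Next I would read off from the three leading-monomial templates
$$\lbar{\phi(u_1,u_2)} = \lc u_1\rc\lc u_2\rc,\quad \lbar{\psi(u_1,u_2)} = \lc u_1\lc u_2\rc\rc,\quad \lbar{\varphi(u_1,u_2)} = \lc\lc u_1\rc u_2\rc^{\two},$$
the identity $\stars{q}{\re{\secondv}} = \lbar{\first(u_1', u_2')}$: since $\lbar{\firstu}$ depends on $u_1, u_2$ only through a fixed template in which $u_j$ appears exactly once, replacing $\lbar{\secondv}$ by $\re{\secondv}$ inside $u_j$ is the same as replacing $u_j$ by $u_j'$ throughout the template. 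An analogous inspection of the remainder templates
$$\re{\phi(u_1,u_2)} = \re{\psi(u_1,u_2)} = \lc\lc u_1\rc u_2\rc,\qquad \re{\varphi(u_1,u_2)} = \lc\lc u_1\rc^{\two}u_2\rc$$
shows that $\re{\firstu}$ likewise contains $\lbar{\secondv}$ at the position corresponding to $u_j$, and that rewriting that occurrence via the $\second$-rule produces exactly $\re{\first(u_1', u_2')}$.

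These two identities deliver the common reduct. On the one hand,
$$\stars{q_2}{\re{\secondv}} = \stars{q_1}{\stars{q}{\re{\secondv}}} = \stars{q_1}{\lbar{\first(u_1', u_2')}} \to_\first \stars{q_1}{\re{\first(u_1', u_2')}}.$$
On the other hand, applying the $\second$-rule to the nested occurrence of $\lbar{\secondv}$ inside $\re{\firstu}$ and then propagating through $\stars{q_1}{\cdot}$ by Lemma~\mref{lem:qs}~(\mref{it:qphi}) gives
$$\stars{q_1}{\re{\firstu}} \to_\second \stars{q_1}{\re{\first(u_1', u_2')}}.$$
Hence both sides meet at $\stars{q_1}{\re{\first(u_1', u_2')}}$, so $\stars{q_1}{\re{\firstu}} \downarrow_\Phi \stars{q_2}{\re{\secondv}}$ as required.

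The main obstacle is the bookkeeping across the six subcases ($\first \in \{\phi, \psi, \varphi\}$ combined with $j \in \{1,2\}$), each requiring the explicit identification of the template $q$ and of the matching position of $\lbar{\secondv}$ inside $\re{\firstu}$. All six cases fit the same substitution-commuting pattern, so the verification itself is routine; the only genuinely delicate point is ruling out the degeneracy $u_2' = 1$ in the $\first = \varphi$ cases, which is handled by the running hypothesis on $u_2$ together with the observation that $\re{\secondv}$ is never $1$.
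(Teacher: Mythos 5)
Your proposal is correct and follows essentially the same route as the paper: both identify the common reduct obtained by substituting $\re{\secondv}$ for $\lbar{\secondv}$ inside the affected argument $u_j$ and then verify that each side of the fork reaches it in one step, using the linearity of $\first$ in each variable (the paper phrases this by writing $\lbar{\firstu}=\stars{p}{u_1,u_2}$ and $\re{\firstu}=\stars{p'}{u_1,u_2}$ abstractly rather than inspecting the three templates). Your extra check that $u_2'\neq 1$ in the $\varphi$ case is a reasonable precaution the paper leaves implicit.
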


\begin{proof}
For clarity, write
$$\first:= \firstu\,\text{ and }\, \second:= \secondv.$$
By symmetry we may assume that $\bsecond$ is a subword of $u_1$ and so $u_1 = \stars{q'}{\bsecond}$ for some
$q'\in \mstar$. As $\firstx$ is linear on each variable and $\re{\firstu}$ is a
monomial by Remark~\mref{re:lebr}~(\mref{it:monom}), we may write
\begin{equation}
\bfirst =\lbar{\first(u_1, u_2)} = \stars{p}{u_1, u_2}\,\text{ and }\, \re{\first}=\re{\firstu} = \stars{p'}{u_1, u_2}\,\text{ for some }\, p,p'\in \mstar.
\mlabel{eq:first}
\end{equation}
Since $q_2 = \stars{q_1}{q}$ by our hypothesis, we have
$$\stars{q_1}{{\bfirst}} = \stars{q_2}{\bsecond} = \stars{q_1}{\stars{q}{\bsecond}},$$
and so
$$\stars{q}{\bsecond} = \bfirst =  \stars{p}{u_1, u_2} = \stars{p}{\stars{q'}{\bsecond}, u_2} =
 \stars{(\stars{p}{q', u_2})}{\bsecond}.$$
Hence
\begin{equation}
q = \stars{p}{q', u_2} = \lbar{\first(q', u_2)},
\mlabel{eq:qfirst}
\end{equation}
where the second equation employs Eq.~(\mref{eq:first}). So on the one hand, we have
\begin{equation}
\begin{aligned}
\stars{q_1}{\re{\first}} = \stars{q_1}{\stars{p'}{u_1, u_2}} = \stars{q_1}{\stars{p'}{\stars{q'}{\bsecond},\, u_2}} \to_{\second} \stars{q_1}{\stars{p'}{\stars{q'}{\re{\second}},\, u_2}},
\end{aligned}
\mlabel{eq:qrfir}
\end{equation}
where the first step is followed from Eq.~(\mref{eq:first}). On the other hand, we have
\begin{equation}
\begin{aligned}
\stars{q_2}{\re{\second}} = \stars{q_1}{\stars{q}{\re{\second}}} = \stars{q_1}{\lbar{\first(\stars{q'}{\re{\second}},\, u_2)}} \to_{\first}
\stars{q_1}{\re{\first(\stars{q'}{\re{\second}},\, u_2)}}
=\stars{q_1}{\stars{p'}{\stars{q'}{\re{\second}},\, u_2}},
\end{aligned}
\mlabel{eq:qrsec}
\end{equation}
where the first step is followed from the hypothesis $q_2 = \stars{q_1}{q}$, the second from Eq.~(\mref{eq:qfirst}) and the last from Eq.~(\mref{eq:first}). Comparing Eqs~(\mref{eq:qrfir}) and~(\mref{eq:qrsec}), we obtain $\stars{q_1}{\re{\first}} \downarrow_{\Phi} \stars{q_2}{\re{\second}}$.  This completes the proof.

\end{proof}

As an application of Theorem~\mref{thm:linec}, we have

\begin{theorem}
The \term-rewriting system $\Pi_\Phi$ defined in Eq.~(\mref{eq:T1}) is convergent.
\mlabel{thm:pconf}
\end{theorem}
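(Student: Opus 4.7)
The plan is to invoke Theorem~\ref{thm:linec} to reduce confluence of $\Pi_\Phi$ to the local confluence of each individual basis element $w\in\frakM(X)$. Termination is essentially free: since $\leq_\db$ is a monomial order on $\frakM(X)$, the system $\Pi_\Phi = \Pi_{S_\Phi(X)}$ is compatible with $\leq_\db$ (as noted after Definition~\ref{def:redsys}), so each one-step rewriting strictly lowers the leading monomial and no infinite descending chain can exist. The system is simple because, by Remark~\ref{re:lebr}, each $\re{\firstu}$ is a monomial strictly below $\lbar{\firstu}$. Both hypotheses of Theorem~\ref{thm:linec} are therefore in place.

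Next I would fix $w\in\frakM(X)$ and take an arbitrary local fork
$$\stars{q_1}{\re{\firstu}} \;\prescript{}{\Phi}{\tto}\; \stars{q_1}{\lbar{\firstu}} \;=\; w \;=\; \stars{q_2}{\lbar{\secondv}} \;\to_\Phi\; \stars{q_2}{\re{\secondv}},$$
with $\firstx,\secondx\in\Phi$ and $u_i,v_i\in\frakM(X)$. Lemma~\ref{lem:thrrel} splits the analysis according to whether the placements $(\lbar{\firstu},q_1)$ and $(\lbar{\secondv},q_2)$ are separated, intersecting or nested. Lemmas~\ref{lem:sepe} and~\ref{lem:noint} dispatch the first two situations. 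For the nested case, by symmetry I would assume $q_2 = \stars{q_1}{q}$ for some $q\in\mstar$, so that $\lbar{\secondv}$ is a subword of $\lbar{\firstu}$ sitting at the hole of $q$; whenever this subword lies inside $u_1$ or $u_2$, Lemma~\ref{lem:incl0} already closes the fork.

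What remains is the ``top-level'' nested sub-case, where $\lbar{\secondv}$ uses the outermost brackets of $\lbar{\firstu}$. Since $\lbar{\firstu}$ is one of the three explicit shapes listed in Eqs.~(\ref{eq:phsi}) and~(\ref{eq:varph}) and $|\lbar{\secondv}|\leq 2$ by Remark~\ref{re:lebr}, the number of genuinely distinct configurations is finite and enumerable. When $\lbar{\secondv}=\lbar{\firstu}$, Lemma~\ref{lem:noteq} already isolates the three configurations: the first is trivial, while the two that cross $\psi$ and $\varphi$ reduce to showing that their right-hand sides agree after one further application of a rule of $\Phi$. The remaining proper top-level overlaps are the genuine critical pairs: $\phi/\psi$, $\phi/\varphi$, $\psi/\varphi$ (in both inclusion directions) and the self-overlaps of $\psi$ and of $\varphi$; for each, I would write out both branches and drive them to a common descendant using the rules in $\Phi$.

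The hard part will be these critical-pair computations. They are mechanically similar but each requires a tailored sequence of rule applications. The key technical lever is that $\varphi$ transports an inner bracket $\lc\cdot\rc$ outward through a surrounding bracket, producing the canonical shape $\lc\lc u_1\rc^{(k)} u_2\rc$ toward which both $\phi$ and $\psi$ ultimately reduce; combined with $\phi$ for rearranging two adjacent brackets and $\psi$ for swapping the nesting of two brackets, this yields a uniform strategy that collapses every fork to a common normal form. Once this finite list of critical pairs is verified joinable, every $w\in\frakM(X)$ is locally confluent; Theorem~\ref{thm:linec} then yields confluence of $\Pi_\Phi$, which together with termination gives the desired convergence.
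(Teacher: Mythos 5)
Your proposal follows the paper's proof essentially verbatim: termination from the monomial order, reduction to local confluence of basis elements via Theorem~\ref{thm:linec}, the separated/intersecting/nested trichotomy of Lemma~\ref{lem:thrrel} handled by Lemmas~\ref{lem:sepe}, \ref{lem:noint} and \ref{lem:incl0}, the equal-leading-monomial case via Lemma~\ref{lem:noteq}, and finally the same finite list of top-level critical pairs (correctly omitting $\phi/\phi$, which cannot occur as a proper nested overlap). The only difference is that you defer the explicit joinability computations for these critical pairs, which constitute the bulk of the paper's proof, but your described strategy --- driving both branches to the canonical form $\lc\lc u_1\rc^{(k)} u_2\rc$ using $\varphi$ to push inner brackets outward --- is exactly how the paper closes each fork.
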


\begin{proof}
Since $\leq_\db$ we used is a monomial order on $\frakM(X)$, $\Pi_\Phi$ is terminating~\mcite{GGSZ}.
By Definition~\mref{defn:ARS}~(\mref{it:dconv}), we are left to show that $\Pi_\Phi$ is confluent.
From Theorem~\mref{thm:linec}, it is sufficient to prove that $\Pi_\Phi$ is locally confluent for any basis element. Let
$$(\stars{q_1}{\re{\firstu}} \prescript{}{\Phi\,}\tto  \stars{q_1}{\lbar{\firstu}}= w= \stars{q_2}{\lbar{\secondv}}\to_\Phi \stars{q_2}{\re{\secondv}})$$
be an arbitrary local fork of a basis element $w$, where
$$\firstx, \secondx\in \Phi,\, u_i,v_i\in\frakM(X), \, i=1,2.$$
We only need to show that
\begin{equation}
\stars{q_1}{\re{\firstu}} \downarrow_\Phi \stars{q_2}{\re{\secondv}}.
\mlabel{eq:wantc0}
\end{equation}
According to Lemma~\mref{lem:thrrel}, the two placements
$(\lbar{\firstu}, q_1)$ and $(\lbar{\secondv}, q_2)$ are separated, or nested, or intersecting.
If they are separated or intersecting, then by Lemmas~\mref{lem:sepe} and~\mref{lem:noint},
Eq.~(\mref{eq:wantc0}) holds.
If the two placements $(\lbar{\firstu}, q_1)$ and $(\lbar{\secondv}, q_2)$ are nested,
by symmetry in Definition~\mref{defn:bwrel}~(\mref{it:bnes}), we may assume that $q_2 = \stars{q_1}{q}$. If $\lbar{\secondv}$ is a subword of $u_1$ or $u_2$, then by Lemma~\mref{lem:incl0}, Eq.~(\mref{eq:wantc0}) holds.

Suppose $\lbar{\secondv}$ is not a subword of $u_1$ and $u_2$.
Note
\begin{equation}
\stars{q_1}{\lbar{\firstu}} = \stars{q_2}{\lbar{\secondv}} =  \stars{q_1}{\stars{q}{\lbar{\secondv}}}\,\text{ and so}\, \lbar{\firstu} =\stars{q}{\lbar{\secondv}}.
\mlabel{eq:abq}
\end{equation}
Since $q_2 = \stars{q_1}{q}$, Eq.~(\mref{eq:wantc0}) is equivalent to
$$\stars{q_1}{\re{\firstu}} \downarrow_\Phi \stars{q_1}{\stars{q}{\re{\secondv}}}.$$
So to prove Eq.~(\mref{eq:wantc0}), by Lemma~\mref{lem:qs}~(\mref{it:qsf}), it is enough to show that
\begin{equation}
\re{\firstu} \downarrow_\Phi \stars{q}{\re{\secondv}}.
\mlabel{eq:wantc}
\end{equation}

If $q=\star$, then $\lbar{\firstu} = \lbar{\secondv}$.
By Lemma~\mref{lem:noteq}, exactly one of the three items there holds.
If Item~(\mref{it:ls1}) holds, then $\re{\first(u_1, u_2)} = \re{\second(v_1, v_2)} $ and Eq.~(\mref{eq:wantc}) is valid by $q = \star$. Since Item~(\mref{it:ls2}) and Item~(\mref{it:ls3}) are symmetric,
we consider that Item~(\mref{it:ls2}) holds. Then
$$\firstx = \psi(x_1, x_2),\, \secondx = \varphi(x_1, x_2),\,
u_1 = 1, u_2 = \lc v_1\rc v_2.$$
This follows from Eqs.~(\mref{eq:phsi}) and~(\mref{eq:varph}) that
$$\re{\firstu} = \lc \lc u_1\rc u_2\rc = \lc \lc 1\rc \lc v_1\rc v_2\rc
\tphi \lc \lc\lc 1\rc v_1\rc v_2\rc
$$
and
$$\stars{q}{\re{\secondv}} =
\stars{\star\,}{\lc \lc v_1\rc^{\two} v_2\rc} =
\lc \lc v_1\rc^{\two} v_2\rc = \lc \lc 1 \lc v_1\rc\rc v_2\rc
\tpsi \lc \lc \lc 1 \rc v_1\rc v_2\rc.$$
Hence Eq.~(\mref{eq:wantc}) is valid.

Summing up, we are left to consider the case of that
\begin{equation}
q_2 = \stars{q_1}{q},\,\lbar{\firstu} = \stars{q}{\lbar{\secondv}},\, q\neq \star \,\text{ and }\, \lbar{\secondv} \text{ is not a subword of } u_1 \text{ and } u_2.
\mlabel{eq:divid}
\end{equation}
Then
\begin{equation}
q_1\neq q_2 \,\text{ and }\, \lbar{\firstu}\neq \lbar{\secondv}.
\mlabel{eq:fsneq}
\end{equation}
We have the following cases to consider.

\smallskip\noindent
{\bf Case 1.} $\firstx = \phi(x_1, x_2)$. Then $\lbar{\firstu} = \lc u_1\rc\lc u_2\rc $ by Eq.~(\mref{eq:phsi}).

If $\secondx = \phi(x_1, x_2)$, then
$$\lc u_1\rc\lc u_2\rc = \lbar{\firstu} = \stars{q}{\lbar{\secondv}} =
\stars{q}{\lc v_1\rc\lc v_2\rc},$$
that is, $\lc v_1\rc\lc v_2\rc$ is a subword of $\lc u_1\rc\lc u_2\rc $. By Eq.~(\mref{eq:fsneq}), $\lc v_1\rc\lc v_2\rc \neq \lc u_1\rc\lc u_2\rc$. So $\lc v_1\rc\lc v_2\rc$ is a subword of $\lc u_1\rc$ or $\lc u_2\rc$. Since $\lc v_1\rc\lc v_2\rc\neq \lc u_1\rc, \lc u_2\rc$ by comparing the breadth, $\lc v_1\rc\lc v_2\rc$ is a subword of $u_1$ or $u_2$ by Lemma~\mref{lem:qubra}~(\mref{it:qubra}), contradicting Eq.~(\mref{eq:divid}). So $\secondx \neq \phi(x_1, x_2)$.

\smallskip\noindent
{\bf Subcase 1.1.} $\secondx = \psi(x_1, x_2)$.
In this subcase, we have
\begin{equation}
\lc u_1\rc\lc u_2\rc =\lbar{\firstu} =\stars{q}{\lbar{\secondv}}
= \stars{q}{\lc v_1 \lc v_2\rc\rc},
\mlabel{eq:qeq1.2}
\end{equation}
that is, $\lc v_1 \lc v_2\rc\rc$ is a subword of $\lc u_1\rc\lc u_2\rc$.
By Lemma~\mref{lem:qubra}~(\mref{it:youbr}), either $\lc v_1 \lc v_2\rc\rc$ is a
subword of $\lc u_1\rc$ or
$\lc v_1 \lc v_2\rc\rc$ is a subword of $\lc u_2\rc$. Note that $\lbar{\secondv} = \lc v_1 \lc v_2\rc\rc$ is not a subword of $u_1$ and $u_2$ by Eq.~(\mref{eq:divid}). From Lemma~\mref{lem:qubra}~(\mref{it:qubra}) and Eq.~(\mref{eq:qeq1.2}),
either
\begin{equation}
\lc v_1 \lc v_2\rc\rc = \lc u_1\rc\,\text{ and }\, q = \star\lc u_2\rc,
\mlabel{eq:case11}
\end{equation}
or
\begin{equation}
\lc v_1 \lc v_2\rc\rc = \lc u_2\rc\,\text{ and }\,q =\lc u_1\rc\star.
\mlabel{eq:case12}
\end{equation}
For the former case of Eq.~(\mref{eq:case11}), we have
$$\re{\phi(u_1,u_2)} = \lc \lc u_1\rc u_2\rc =  \lc \lc v_1 \lc v_2\rc\rc u_2\rc \tpsi
\lc \lc \lc v_1 \rc v_2\rc u_2\rc$$
and
$$ \stars{q}{\re{\psi(v_1,v_2)}} =\stars{(\star\lc u_2\rc)\,}{\lc \lc v_1\rc  v_2\rc }
= \lc \lc v_1\rc  v_2\rc \lc u_2\rc \tphi \lc \lc\lc v_1\rc  v_2\rc u_2\rc.$$
Hence $\re{\phi(u_1,u_2)}  \downarrow_\Phi \stars{q}{\re{\psi(v_1,v_2)}}$ and Eq.~(\mref{eq:wantc}) holds, as needed.
For the later case of Eq.~(\mref{eq:case12}), we have $u_2 = v_1 \lc v_2\rc$. So
$$\re{\phi(u_1, u_2)} = \lc \lc u_1\rc u_2\rc =  \lc \lc u_1\rc v_1 \lc v_2\rc\rc
\tpsi \lc \lc \lc u_1\rc v_1 \rc v_2\rc $$
and
$$\stars{q}{\re{\psi(v_1, v_2)}}= \stars{(\lc u_1\rc \star)\,}{\lc \lc v_1\rc v_2\rc}
= \lc u_1\rc \lc \lc v_1\rc v_2\rc \tphi \lc \lc u_1\rc \lc v_1\rc v_2\rc
\tphi  \lc \lc\lc u_1\rc v_1\rc v_2\rc.$$
Hence $\re{\phi(u_1, u_2)} \downarrow_\Phi \stars{q}{\re{\psi(v_1, v_2)}}$ and Eq.~(\mref{eq:wantc}) holds, as needed.

\smallskip\noindent
{\bf Subcase 1.2.} $\secondx = \varphi(x_1, x_2)$.
In this subcase, we have
\begin{equation}
\lc u_1\rc\lc u_2\rc = \lbar{\firstu} =\stars{q}{\lbar{\secondv}}
= \stars{q}{\lc \lc v_1\rc v_2\rc^{\two}},
\mlabel{eq:qeq1.3}
\end{equation}
that is, $\lc \lc v_1\rc v_2\rc^{\two}$ is a subword
of $\lc u_1\rc\lc u_2\rc$. By Lemma~\mref{lem:qubra}~(\mref{it:youbr}), either
$\lc \lc v_1\rc v_2\rc^{\two}$ is a subword of $\lc u_1\rc$ or
$\lc \lc v_1\rc v_2\rc^{\two}$ is a subword of or $\lc u_2\rc$.
Since $\lc \lc v_1\rc v_2\rc^{\two}$ is not a subword or $u_1$ and $u_2$ by Eq.~(\mref{eq:divid}), from Lemma~\mref{lem:qubra}~(\mref{it:qubra}) and Eq~(\mref{eq:qeq1.3}), either
\begin{equation}
\lc \lc v_1\rc v_2\rc^{\two} = \lc u_1\rc \,\text{ and }\, q = \star \lc u_2\rc
\mlabel{eq:pvc1}
\end{equation}
or
\begin{equation}
\lc \lc v_1\rc v_2\rc^{\two} = \lc u_2\rc \,\text{ and }\, q = \lc u_1\rc \star.
\mlabel{eq:pvc2}
\end{equation}
Consider firstly the former case of Eq.~(\mref{eq:pvc1}). We have
\begin{align*}
\re{\phi(u_1, u_2)} = \lc \lc u_1\rc u_2\rc = \lc \lc \lc v_1\rc v_2\rc^{\two} u_2\rc
\tvarphi  \lc \lc \lc v_1\rc^{\two} v_2\rc u_2\rc
\end{align*}
and
\begin{align*}
\stars{q}{\re{\varphi(v_1, v_2)}} =& \stars{(\star\lc u_2\rc)\, }{\lc \lc v_1 \rc^{\two} v_2\rc} = \lc \lc v_1 \rc^{\two} v_2\rc\lc u_2\rc \tphi
\lc\lc \lc v_1 \rc^{\two} v_2\rc u_2\rc.
\end{align*}
Hence $\re{\phi(u_1, u_2)} \downarrow_\Phi \stars{q}{\re{\varphi(v_1, v_2)}}$ and Eq.~(\mref{eq:wantc}) holds.
For the later case of Eq.~(\mref{eq:pvc2}), we have $u_2 = \lc \lc v_1\rc v_2\rc$. Then
\begin{align*}
\re{\phi(u_1, u_2)} =&\, \lc\lc  u_1\rc u_2\rc = \lc\lc  u_1\rc \lc \lc v_1\rc v_2\rc\rc
\tphi \lc\lc \lc u_1\rc \lc v_1\rc v_2\rc\rc =  \lc \lc u_1\rc \lc v_1\rc v_2\rc^{\two}
\tphi \lc \lc \lc u_1\rc v_1\rc v_2\rc^{\two} \\
\tvarphi& \, \lc \lc \lc u_1\rc v_1\rc^{\two} v_2\rc
\tvarphi \lc \lc \lc u_1\rc^{\two} v_1\rc v_2\rc
\end{align*}
and
\begin{align*}
\stars{q}{\re{\varphi(v_1, v_2)}} = &\,
\stars{(\lc u_1\rc \star)\,}{\lc \lc v_1\rc^{\two} v_2\rc}= \lc u_1\rc \lc \lc v_1\rc^{\two} v_2\rc \tphi
\lc\lc u_1\rc \lc v_1\rc^{\two} v_2\rc
\tphi \lc\lc\lc u_1\rc\lc v_1\rc\rc v_2\rc \\
\tphi&\, \lc\lc\lc\lc u_1\rc v_1\rc\rc v_2\rc =  \lc\lc\lc u_1\rc v_1\rc^{\two} v_2\rc
\tvarphi \lc\lc\lc u_1\rc^{\two} v_1\rc v_2\rc.
\end{align*}
Hence $\re{\phi(u_1, u_2)}  \downarrow_\Phi \stars{q}{\re{\varphi(v_1, v_2)}}$ and Eq.~(\mref{eq:wantc}) holds, as needed.

\smallskip\noindent
{\bf Case 2.} $\firstx = \psi(x_1, x_2)$.
Then $\lbar{\firstu} = \lc u_1\lc u_2\rc\rc $ by Eq.~(\mref{eq:phsi}).

\smallskip\noindent
{\bf Case 2.1.} $\secondx = \phi(x_1, x_2)$.
In this subcase, we have
\begin{equation}
\lc u_1\lc u_2\rc\rc =\lbar{\firstu} =\stars{q}{\lbar{\secondv}}= \stars{q}{\lc v_1\rc\lc v_2\rc},
\mlabel{eq:qeq2.1}
\end{equation}
that is, $\lc v_1\rc\lc v_2\rc$ is a subword of $\lc u_1\lc u_2\rc\rc$. Since $\lc v_1\rc\lc v_2\rc\neq \lc u_1\lc u_2\rc\rc$ by Eq.~(\mref{eq:fsneq}), it follows from Lemma~\mref{lem:qubra}~(\mref{it:qubra}) that $\lc v_1\rc\lc v_2\rc$ is a subword of
$ u_1\lc u_2\rc$. Note $\lc v_1\rc\lc v_2\rc$ is not a subword of $u_1$ or $u_2$
by Eq.~(\mref{eq:divid}).
So $a\lc v_1\rc\lc v_2\rc =  u_1\lc u_2\rc$ for some $a\in \frakM(X)$ and $q =\lc a\star \rc $ by Eq.~(\mref{eq:qeq2.1}).
Then $a\lc v_1\rc = u_1$, $\lc v_2\rc = \lc u_2\rc$, $v_2 = u_2$.
This follows that
$$\re{\psi(u_1, u_2)} = \lc \lc u_1\rc u_2\rc = \lc \lc a\lc v_1\rc \rc u_2\rc
\tpsi \lc \lc \lc a\rc v_1\rc u_2\rc$$
and
$$\stars{q}{\re{\phi(v_1, v_2)}} = \stars{(\lc a \star\rc)\,}{\lc \lc v_1\rc v_2\rc}
= \lc a \lc \lc v_1\rc v_2\rc\rc = \lc a \lc \lc v_1\rc u_2\rc\rc \tpsi \lc\lc a \rc \lc v_1\rc u_2\rc
\tphi \lc\lc \lc a \rc v_1\rc u_2\rc .$$
Hence $\re{\psi(u_1, u_2)} \downarrow_\Phi \stars{q}{\re{\phi(v_1, v_2)}}$ and Eq.~(\mref{eq:wantc}) holds, as needed.

\smallskip\noindent
{\bf Case 2.2} $\secondx= \psi(x_1, x_2)$.
In this subcase, we have
\begin{equation}
\lc u_1\lc u_2\rc\rc =\lbar{\firstu} =\stars{q}{\lbar{\secondv}}=
\stars{q}{\lc v_1\lc v_2\rc\rc},
\mlabel{eq:qeq2.2}
\end{equation}
that is, $\lc v_1\lc v_2\rc\rc$ is a subword of
$\lc u_1\lc u_2\rc\rc $.
By Lemma~\mref{lem:qubra}~(\mref{it:qubra}) and
$\lc v_1\lc v_2\rc\rc \neq \lc u_1\lc u_2\rc\rc$ from Eq.~(\mref{eq:fsneq}),
$\lc v_1\lc v_2\rc\rc$ is a subword of $u_1\lc u_2\rc$.
Note $\lc v_1\lc v_2\rc\rc$ is not a subword of $u_1$ and $u_2$ by Eq.~(\mref{eq:divid}).
So by Lemma~\mref{lem:qubra}~(\mref{it:youbr}),
$\lc v_1\lc v_2\rc\rc$ is a subword of $\lc u_2\rc$.
From Lemma~\mref{lem:qubra}~(\mref{it:qubra}),
we have $\lc v_1\lc v_2\rc\rc = \lc u_2\rc$, $ v_1\lc v_2\rc = u_2$ and
$q = \lc u_1 \star \rc $ by Eq.~(\mref{eq:qeq2.2}). Thus
$$ \re{\psi(u_1, u_2)} = \lc \lc u_1\rc u_2\rc = \lc \lc u_1\rc v_1\lc v_2\rc\rc
\tpsi \lc \lc\lc u_1\rc v_1 \rc v_2\rc$$
and
$$ \stars{q}{\re{\psi(v_1, v_2)}} =
\stars{(\lc u_1 \star \rc)\,}{\lc \lc v_1\rc v_2\rc}
= \lc u_1 \lc \lc v_1\rc v_2\rc \rc \tpsi \lc \lc u_1 \rc \lc v_1\rc v_2\rc
\tphi  \lc \lc\lc u_1 \rc v_1\rc v_2\rc .$$
Hence $\re{\psi(u_1, u_2)} \downarrow_\Phi \stars{q}{\re{\psi(v_1, v_2)}}$ and Eq.~(\mref{eq:wantc}) holds, as needed.

\smallskip\noindent
{\bf Case 2.3.} $\secondx = \varphi(x_1,x_2)$.
In this subcase, we have
\begin{equation}
\lc  u_1\lc u_2\rc\rc  =\lbar{\firstu} =\stars{q}{\lbar{\secondv}}=
\stars{q}{\lc \lc v_1\rc  v_2\rc^{\two}},
\mlabel{eq:qeq2.3}
\end{equation}
that is, $\lc \lc v_1\rc  v_2\rc^{\two}$ is a subword of
$\lc  u_1\lc u_2\rc\rc $.
By Lemma~\mref{lem:qubra}~(\mref{it:qubra}) and
$\lc \lc v_1\rc  v_2\rc^{\two} \neq \lc  u_1\lc u_2\rc\rc$ from Eq.~(\mref{eq:fsneq}),
$\lc \lc v_1\rc  v_2\rc^{\two}$ is a subword of
$ u_1\lc u_2\rc$.
Note from Eq.~(\mref{eq:divid}), $\lc \lc v_1\rc  v_2\rc^{\two}$ is not
a subword of $u_1$ and $u_2$.
So by Lemma~\mref{lem:qubra}~(\mref{it:youbr}),
$\lc \lc v_1\rc  v_2\rc^{\two}$ is a subword of $ \lc u_2\rc$.
 By Lemma~\mref{lem:qubra}~(\mref{it:qubra}),
$  \lc \lc v_1\rc  v_2\rc^{\two} = \lc u_2\rc$ and then $q = \lc u_1 \star\rc$ by Eq.~(\mref{eq:qeq2.3}). This implies
$ \lc \lc v_1\rc  v_2\rc = u_2 $. Thus
\begin{align*}
 \re{\psi(u_1,u_2)} = &\, \lc \lc u_1\rc u_2\rc =
 \lc \lc u_1\rc \lc \lc v_1\rc  v_2\rc \rc \tphi
  \lc\lc \lc u_1\rc \lc v_1\rc  v_2\rc \rc = \lc \lc u_1\rc \lc v_1\rc  v_2\rc^{\two}\\
 \tphi&\, \lc\lc \lc u_1\rc v_1\rc  v_2\rc^{\two}
 \tvarphi \lc\lc \lc u_1\rc v_1\rc^{\two}  v_2\rc
  \tvarphi \lc\lc \lc u_1\rc^{\two} v_1\rc  v_2\rc
\end{align*}
and
\begin{align*}
\stars{q}{\re{\varphi(v_1,v_2)}} =& \,
\stars{(\lc u_1\star\rc)\,}{\lc \lc v_1 \rc^{\two} v_2\rc }
= \lc u_1\lc \lc v_1 \rc^{\two} v_2\rc\rc \tpsi
\lc \lc u_1\rc \lc v_1 \rc^{\two} v_2\rc
\tphi \lc \lc\lc u_1\rc\lc v_1 \rc\rc v_2\rc \\
\tphi&\, \lc\lc \lc\lc u_1\rc v_1 \rc\rc v_2\rc
= \lc \lc\lc u_1\rc v_1 \rc^{\two} v_2\rc
\tvarphi \lc \lc\lc u_1\rc^{\two} v_1 \rc v_2\rc.
\end{align*}
Hence $ \re{\psi(u_1,u_2)} \downarrow_\Phi \stars{q}{\re{\varphi(v_1,v_2)}} $ and Eq.~(\mref{eq:wantc}) holds, as needed.

\smallskip\noindent
{\bf Case 3.} $\firstx = \varphi(x_1,x_2)$. Then $\lbar{\firstu} = \lc \lc u_1\rc u_2\rc^{\two}$ by Eq.~(\mref{eq:varph}).

\smallskip\noindent
{\bf Subcase 3.1.} $\secondx = \phi(x_1,x_2)$.
In this subcase,
\begin{equation}
\lc \lc u_1\rc u_2\rc^{\two} =\lbar{\firstu} =\stars{q}{\lbar{\secondv}}=\stars{q}{\lc v_1\rc \lc v_2\rc },
\mlabel{eq:qeq3.1}
\end{equation}
that is, $\lc v_1\rc \lc v_2\rc $ is a subword of $\lc \lc u_1\rc u_2\rc^{\two}$.
As $\lc v_1\rc \lc v_2\rc\neq \lc \lc u_1\rc u_2\rc^{\two}$ by Eq.~(\mref{eq:divid}),
$\lc v_1\rc \lc v_2\rc$ is a subword of $\lc \lc u_1\rc u_2\rc$ by Lemma~\mref{lem:qubra}~(\mref{it:qubra}).
Again using Lemma~\mref{lem:qubra}~(\mref{it:qubra}),
$\lc v_1\rc \lc v_2\rc$ is a subword of $\lc u_1\rc u_2$ by $\lc v_1\rc \lc v_2\rc \neq \lc \lc u_1\rc u_2\rc$.
From Eq.~(\mref{eq:divid}), $\lbar{\secondv} = \lc v_1\rc \lc v_2\rc$ is not a subword of $u_1$ and $u_2$.
Hence $\lc v_1\rc \lc v_2\rc a= \lc u_1\rc u_2$ for some $a\in \frakM(X)$ and so $q = \lc \star a\rc^{\two}$ by Eq.~(\mref{eq:qeq3.1}). This implies that
$\lc v_1\rc = \lc u_1\rc$, $v_1 = u_1$ and $ \lc v_2\rc a = u_2.$
Hence
$$\re{\varphi(u_1, u_2)} = \lc \lc u_1\rc^{\two} u_2\rc=
\lc \lc u_1\rc^{\two} \lc v_2\rc a\rc \tphi\lc \lc\lc u_1\rc^{\two} v_2\rc a\rc
$$
and
\begin{align*}
\stars{q} {\re{\phi(v_1, v_2)}}=&\, \stars{(\lc\star a\rc^{\two})\,}{\lc \lc v_1\rc v_2\rc}
=\lc\lc \lc v_1\rc v_2\rc a \rc^{\two}
=\lc\lc \lc u_1\rc v_2\rc a \rc^{\two} \\
\tvarphi&\, \lc\lc \lc u_1\rc v_2\rc^{\two} a \rc
\tvarphi \lc\lc \lc u_1\rc^{\two} v_2\rc a \rc.
\end{align*}
Hence $\re{\varphi(u_1, u_2)} \downarrow_\Phi \stars{q} {\re{\phi(v_1, v_2)}}$ and Eq.~(\mref{eq:wantc}) holds, as needed.

\smallskip\noindent
{\bf Subcase 3.2.} $\secondx = \psi(x_1, x_2)$.
In this subcase,
\begin{equation}
\lc \lc u_1\rc u_2\rc^{\two}=\lbar{\firstu} =
\stars{q}{\lbar{\secondv}} = \stars{q}{\lc v_1\lc v_2\rc\rc},
\mlabel{eq:qeq3.2}
\end{equation}
that is, $\lc v_1\lc v_2\rc\rc$ is a subword of $\lc \lc u_1\rc u_2\rc^{\two}$.
Since $\lc v_1\lc v_2\rc\rc\neq \lc \lc u_1\rc u_2\rc^{\two}$ by Eq.~(\mref{eq:fsneq}),
$\lc v_1\lc v_2\rc\rc$ is a subword of $\lc \lc u_1\rc u_2\rc$ by Lemma~\mref{lem:qubra}~(\mref{it:qubra}).
Again using Lemma~\mref{lem:qubra}~(\mref{it:qubra}), either
$\lc v_1\lc v_2\rc\rc = \lc \lc u_1\rc u_2\rc$ or
$\lc v_1\lc v_2\rc\rc$ is a subword of
$\lc u_1\rc u_2$.

For the former case of $\lc v_1\lc v_2\rc\rc = \lc \lc u_1\rc u_2\rc$, we have $q = \lc \star\rc$ by Eq.~(\mref{eq:qeq3.2}) and $v_1\lc v_2\rc= \lc u_1\rc u_2$.
This implies that $v_1 = \lc u_1\rc v_1'$ and $u_2 = u_2' \lc v_2\rc$ for some $v_1',u_2'\in \frakM(X)$. Then
$$ \lc u_1\rc v_1'\lc v_2\rc= v_1\lc v_2\rc= \lc u_1\rc u_2 = \lc u_1\rc u_2' \lc v_2\rc\,
\text{ and so }\, v_1' = u_2'=:a.$$
Then $v_1 = \lc u_1\rc a$ and $u_2 = a\lc v_2\rc$. This follows that
$$ \re{\varphi(u_1,u_2)} = \lc\lc u_1\rc^{\two} u_2\rc =
\lc\lc u_1\rc^{\two}a\lc v_2\rc\rc \tpsi \lc\lc\lc u_1\rc^{\two}a\rc v_2\rc
$$
and
\begin{align*}
\stars{q}{\re{\psi(v_1,v_2)}} =&\, \stars{\lc \star\rc\,}{\lc\lc v_1\rc v_2\rc}
= \lc \lc\lc v_1\rc v_2\rc\rc
=\lc\lc v_1\rc v_2\rc^{\two} = \lc\lc \lc u_1\rc a\rc v_2\rc^{\two}\\
\tvarphi&\, \lc\lc \lc u_1\rc a\rc^{\two} v_2\rc
\tvarphi  \lc\lc \lc u_1\rc^{\two} a\rc v_2\rc.
\end{align*}
Hence $\re{\varphi(u_1,u_2)} \downarrow_\Phi \stars{q}{\re{\psi(v_1,v_2)}}$ and Eq.~(\mref{eq:wantc}) holds, as needed.

Consider the latter case that $\lc v_1\lc v_2\rc\rc$ is a subword of
$\lc u_1\rc u_2$. By Eq.~(\mref{eq:divid}), $\lbar{\secondv} = \lc \lc v_1\rc v_2\rc$ is not a
subword of $u_1$ and $u_2$. So from Lemma~\mref{lem:qubra}~(\mref{it:youbr}),
$\lc v_1\lc v_2\rc\rc$ is a subword of $\lc u_1\rc$.
Using Lemma~\mref{lem:qubra}~(\mref{it:qubra}), we have
$\lc v_1\lc v_2\rc\rc = \lc u_1\rc$ and so $q = \lc \star u_2 \rc^{\two}$ by Eq.~(\mref{eq:qeq3.2}).
Then $ v_1\lc v_2\rc = u_1$. So we have
$$ \re{\varphi(u_1,u_2)} = \lc \lc u_1\rc^{\two} u_2\rc
=  \lc \lc  v_1\lc v_2\rc\rc^{\two} u_2\rc \tpsi
\lc \lc  \lc v_1\rc v_2\rc^{\two} u_2\rc \tvarphi
\lc \lc  \lc v_1\rc^{\two} v_2\rc u_2\rc
$$
and
$$\stars{q}{\re{\psi(v_1,v_2)}} =
 \stars{( \lc\star u_2\rc^{\two})\,}{\lc \lc v_1\rc v_2\rc}=
  \lc\lc \lc v_1\rc v_2\rc u_2\rc^{\two} \tvarphi
  \lc\lc \lc v_1\rc v_2\rc^{\two} u_2\rc
  \tvarphi   \lc\lc \lc v_1\rc^{\two} v_2\rc u_2\rc.
 $$
Hence $\re{\varphi(u_1,u_2)} \downarrow_\Phi \stars{q}{\re{\psi(v_1,v_2)}}$ and Eq.~(\mref{eq:wantc}) holds, as needed.

\smallskip\noindent
{\bf Subcase 3.3.} $\secondx = \varphi(x_1, x_2)$. In this subsection, we have
\begin{equation}
\lc \lc u_1\rc u_2\rc^{\two} = \lbar{\firstu} =
\stars{q}{\lbar{\secondv}} = \stars{q}{\lc\lc v_1\rc v_2\rc^{\two} },
\mlabel{eq:qeq3.3}
\end{equation}
that is, $\lc\lc v_1\rc v_2\rc^{\two} $ is a subword of $\lc \lc u_1\rc u_2\rc^{\two}$.
By Eq.~(\mref{eq:fsneq}), $\lc\lc v_1\rc v_2\rc^{\two} \neq \lc \lc u_1\rc u_2\rc^{\two}$.
So from Lemma~\mref{lem:qubra}~(\mref{it:qubra}),
$\lc\lc v_1\rc v_2\rc^{\two} $ is a subword of $\lc \lc u_1\rc u_2\rc$.
Again using Lemma~\mref{lem:qubra}~(\mref{it:qubra}), either
$\lc\lc v_1\rc v_2\rc^{\two}  = \lc \lc u_1\rc u_2\rc$ or
$\lc\lc v_1\rc v_2\rc^{\two} $ is a subword of $\lc u_1\rc u_2$.

For the former case, we have $q = \lc \star\rc$ by Eq.~(\mref{eq:qeq3.3})
and $\lc\lc v_1\rc v_2\rc =  \lc u_1\rc u_2$. This implies that $u_2 = 1$,
$\lc\lc v_1\rc v_2\rc = \lc u_1\rc$ and $\lc v_1\rc v_2 = u_1$. Then
$$\re{\varphi(u_1, u_2)} = \lc \lc u_1\rc^{\two} u_2\rc = \lc u_1\rc^{\three}
 = \lc\lc v_1\rc v_2\rc^{\three} \tvarphi \lc\lc v_1\rc^{\two} v_2\rc^{\two} \tvarphi
 \lc\lc v_1\rc^{\three} v_2\rc
 $$
and
$$\stars{q}{\re{\varphi(v_1, v_2)}} = \stars{\lc\star\rc\,}{\lc \lc v_1\rc^{\two} v_2\rc}
= \lc\lc \lc v_1\rc^{\two} v_2\rc\rc = \lc \lc v_1\rc^{\two} v_2\rc^{\two}
\tvarphi  \lc \lc v_1\rc^{\three} v_2\rc.$$
Hence $\re{\varphi(u_1,u_2)} \downarrow_\Phi \stars{q}{\re{\varphi(v_1,v_2)}}$ and Eq.~(\mref{eq:wantc}) holds, as needed.

Consider the later case of that
$\lc\lc v_1\rc v_2\rc^{\two} $ is a subword of $\lc u_1\rc u_2$.
By Eq.~(\mref{eq:divid}), $\lbar{\secondv} = \lc\lc v_1\rc v_2\rc^{\two}$
is not a subword of $u_1$ and $u_2$. So from Lemma~\mref{lem:qubra}~(\mref{it:youbr}),
$\lc\lc v_1\rc v_2\rc^{\two}$ is a subword of $\lc u_1\rc$.
Using Lemma~\mref{lem:qubra}~(\mref{it:qubra}), we have
$\lc\lc v_1\rc v_2\rc^{\two} = \lc u_1\rc$ and so
$q= \lc \star u_2\rc^{\two}$ by Eq.~(\mref{eq:qeq3.3}).
Thus we have
$$\re{\varphi(u_1, u_2)} = \lc \lc u_1\rc^{\two} u_2\rc =
\lc \lc\lc v_1\rc v_2\rc^{\three} u_2\rc \tvarphi
\lc \lc\lc v_1\rc^{\two} v_2\rc^{\two} u_2\rc \tvarphi
\lc \lc\lc v_1\rc^{\three} v_2\rc u_2\rc
$$and
$$\stars{q}{\re{\varphi(v_1, v_2)}}
= \stars{( \lc \star u_2\rc^{\two})\,}{\lc \lc v_1\rc^{\two} v_2\rc}
=  \lc \lc \lc v_1\rc^{\two} v_2\rc u_2\rc^{\two} \tvarphi
\lc \lc \lc v_1\rc^{\two} v_2\rc^{\two} u_2\rc \tvarphi
\lc \lc \lc v_1\rc^{\three} v_2\rc u_2\rc
$$
Hence $\re{\varphi(u_1, u_2)} \downarrow_\Phi \stars{q}{\re{\varphi(v_1, v_2)}}$ and Eq.~(\mref{eq:wantc}) holds, as needed. This completes the proof.

\end{proof}

Recall from Remark~\mref{re:degen} that $u_2\neq 1$ in $\varphi(u_1, u_2)$. So we
define
\begin{equation}
\begin{aligned}
M:=& \{\stars{q}{\lbar{\phi(u_1, u_2)}},\,\stars{q}{\lbar{\psi(u_1, u_2)}} \mid
q\in \frakM^\star(X), u_1, u_2\in \frakM(X)\},\\
N:=& \{\stars{q}{\lbar{\varphi(u_1, u_2)}} \mid
q\in \frakM^\star(X), u_1\in \frakM(X), u_2\in \frakM(X)\setminus\{1\}\},\\
N_1:=& \{\stars{q}{\lbar{\varphi(u_1, u_2)}} \mid
q\in \frakM^\star(X), u_1,u_2\in \frakM(X)\},\\
N_2:=& \{\stars{q}{\lbar{\varphi(u_1, 1)}} \mid
q\in \frakM^\star(X), u_1\in \frakM(X)\}.
\end{aligned}
\end{equation}
Then $N = N_1\setminus N_2$. From Eqs.~(\mref{eq:phsi}) and~(\mref{eq:varph}),
$$\stars{q}{\lbar{\varphi(u_1, 1)}} = \stars{q}{\lc u_1\rc^{\three}}
= \stars{q}{\lc 1\lc u_1\rc^{\two} \rc }
= \stars{q}{\lbar{\psi(1,\,\lc u_1\rc) }}\in M$$
and so $N_2 \subseteq M$. Thus
\begin{equation*}
M\cup N = M\cup ( N_1\setminus N_2)= M\cup N_1.
%
%\mlabel{eq:mn1}
\end{equation*}
Hence
\begin{equation}
\begin{aligned}
 &\{\stars{q}{\lbar{s}} \mid q\in \mstar, s\in S_\Phi(X) \}\\
= & \{\stars{q}{\lbar{s}} \mid q\in \mstar, s\in S_\phi(X) \cup S_\psi(X) \}
\cup \{\stars{q}{\lbar{s}} \mid q\in \mstar, s\in S_\varphi(X) \}\\
=& M\cup N = M\cup N_1\\
=& \{\stars{q}{\lc u_1\rc \lc u_2\rc },\,
\stars{q}{\lc u_1\lc u_2\rc \rc},\, \stars{q}{\lc \lc u_1 \rc u_2 \rc^{\two}}
\mid q\in \mstar, u_1, u_2\in \frakM(X) \},
\end{aligned}
\mlabel{eq:mneq}
\end{equation}
where the second step employs Remark~\mref{re:degen}.
Now we are ready to give our main result. From Proposition~\mref{pp:freea} and Eq.~(\mref{eq:idealeq}), $\bfk \frakM(X)/ \Id(S_\Phi(X))$
is the free averaging algebra on $X$.

\begin{theorem}
The $\Irr(S_\Phi(X))$ is a \bfk-basis of the free (unitary) averaging algebra $\bfk \frakM(X)/ \Id(S_\Phi(X))$ on $X$. More precisely,
$$\bfk \frakM(X) =  \Id(S_\Phi(X)) \oplus \bfk \Irr(S_\Phi(X)), $$
where
$$\Irr(S_\Phi(X)) = \frakM(X) \setminus
\{\stars{q}{\lc u_1\rc \lc u_2\rc },\,
\stars{q}{\lc u_1\lc u_2\rc \rc},\, \stars{q}{\lc \lc u_1 \rc u_2 \rc^{\two}}
\mid q\in \mstar, u_1, u_2\in \frakM(X) \}.$$
\mlabel{thm:basis}
\end{theorem}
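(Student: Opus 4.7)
The plan is to assemble the theorem from results that are already in place. By Proposition~\ref{pp:freea} together with Eq.~(\ref{eq:idealeq}), we know that $\bfk\frakM(X)/\Id(S_\Phi(X)) = \bfk\frakM(X)/\Id(S_\phi(X)\cup S_\psi(X))$ is precisely the free averaging algebra on $X$, so the point is really to identify a $\bfk$-basis of this quotient and to obtain the direct sum decomposition.

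First, I would invoke Theorem~\ref{thm:pconf}, which states that the \term-rewriting system $\Pi_\Phi = \Pi_{S_\Phi(X)}$ is convergent with respect to $\leq_\db$. Since $\leq_\db$ is a monomial order on $\frakM(X)$, Theorem~\ref{thm:convsum} applies with $S = S_\Phi(X)$, and the equivalence of Items (\ref{it:sconv}) and (\ref{it:ssum}) there immediately yields
\begin{equation*}
\bfk\frakM(X) = \Id(S_\Phi(X)) \oplus \bfk\Irr(S_\Phi(X)).
\end{equation*}
Consequently, the composite map $\bfk\Irr(S_\Phi(X)) \hookrightarrow \bfk\frakM(X) \twoheadrightarrow \bfk\frakM(X)/\Id(S_\Phi(X))$ is a $\bfk$-linear isomorphism, so the image of $\Irr(S_\Phi(X))$ under the canonical projection is a $\bfk$-basis of the free averaging algebra. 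This also matches the conclusion of the Composition--Diamond Lemma (Lemma~\ref{lem:cdl}) once one observes, via the equivalence of Items (\ref{it:sconf}) and (\ref{it:sgsbasis}) in Theorem~\ref{thm:convsum}, that $S_\Phi(X)$ is a Gr\"obner--Shirshov basis.

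It remains only to translate the abstract description $\Irr(S_\Phi(X)) = \frakM(X) \setminus \{q|_{\lbar{s}} \mid s \in S_\Phi(X)\}$ from Eq.~(\ref{eq:Irrs}) into the explicit form stated in the theorem. This is exactly the content of Eq.~(\ref{eq:mneq}), which was derived just before the statement: the set of all $q|_{\lbar{s}}$ with $s \in S_\Phi(X) = S_\phi(X) \cup S_\psi(X) \cup S_\varphi(X)$ equals
\begin{equation*}
\{\stars{q}{\lc u_1\rc\lc u_2\rc},\ \stars{q}{\lc u_1\lc u_2\rc\rc},\ \stars{q}{\lc\lc u_1\rc u_2\rc^{\two}} \mid q\in\mstar,\ u_1,u_2\in\frakM(X)\},
\end{equation*}
where the degenerate case $u_2 = 1$ in $\varphi(u_1,u_2)$ (see Remark~\ref{re:degen}) is absorbed into the $\psi$-type monomials, making the union clean. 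Substituting this into the formula for $\Irr(S_\Phi(X))$ gives the stated description.

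No step here is truly an obstacle: the convergence of $\Pi_\Phi$ (the technically hard part, with all the forks handled in Theorem~\ref{thm:pconf}) and the equivalence theorem (Theorem~\ref{thm:convsum}) together do all the work. The only thing to be careful about is the bookkeeping in Eq.~(\ref{eq:mneq}): one must verify that the $N_2$-monomials $\stars{q}{\lc u_1\rc^{\three}}$ arising from the forbidden $u_2 = 1$ case of $\varphi$ are already captured among the $\psi$-leading monomials $\stars{q}{\lc u_1'\lc u_2'\rc\rc}$ (taking $u_1' = 1$ and $u_2' = \lc u_1\rc^{\two}$), so that removing the restriction $u_2 \neq 1$ does not enlarge $\Irr(S_\Phi(X))$. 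This is checked in the paragraph containing Eq.~(\ref{eq:mneq}), and completes the proof.
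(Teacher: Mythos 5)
Your proposal is correct and follows essentially the same route as the paper's own proof: convergence of $\Pi_\Phi$ from Theorem~\ref{thm:pconf}, the equivalence in Theorem~\ref{thm:convsum} applied to $S=S_\Phi(X)$ to get the direct sum decomposition, and Eq.~(\ref{eq:mneq}) (with the $N_2\subseteq M$ bookkeeping for the degenerate $u_2=1$ case of $\varphi$) to obtain the explicit description of $\Irr(S_\Phi(X))$.
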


\begin{proof}
By Theorem~\mref{thm:pconf}, $\Pi_{\Phi} = \Pi_{S_\Phi(X)}$ is convergent. Using Theorems~\mref{thm:convsum} to $S = S_\Phi(X)$, we have
$$\bfk \frakM(X) =  \Id(S_\Phi(X)) \oplus \bfk \Irr(S_\Phi(X)),$$
where
\begin{align*}
\Irr(S_\Phi(X)) = & \frakM(X) \setminus \{\stars{q}{\lbar{s}} \mid q\in \mstar, s\in S_\Phi(X) \}\\
=& \frakM(X) \setminus
\{\stars{q}{\lc u_1\rc \lc u_2\rc },\,
\stars{q}{\lc u_1\lc u_2\rc \rc},\, \stars{q}{\lc \lc u_1 \rc u_2 \rc^{\two}}
\mid q\in \mstar, u_1, u_2\in \frakM(X) \}
\end{align*}
by Eq.~(\mref{eq:mneq}).
\end{proof}

%\begin{remark}
%Recall~\mcite{GP} that the set
%\begin{equation}
%\frakM(X) \setminus \{\stars{q}{\lc u_1\rc \lc u_2\rc },\, \stars{q}{\lc \lc u_1\rc u_2\rc},\, \stars{q}{\lc u_1 \lc u_2\rc^{\two}\rc} \mid q\in \mstar, u_1, u_2\in \frakM(X) \},
%\mlabel{eq:aword}
%\end{equation}
%named the set of averaging words, is another \bfk-basis of the free averaging algebra $\bfk \frakM(X)/ \Id(S_\Phi(X))$ on $X$. Note the subtle difference from Eq.~(\mref{eq:irrs}):
%in the basis $\Irr(S_\Phi(X))$ defined in Eq.~(\mref{eq:irrs}), we leave out the words of the form $\stars{q}{\lc u_1 \lc u_2\rc\rc^{\two}}$, because $\lc u_1 \lc u_2\rc\rc^{\two} = \lbar{\varphi(u_1, u_2)}$ is the leading monomial with respect to the monomial order $\leq_\db$. The set in Eq.~(\mref{eq:aword}) is on the contrary. So the basis $\Irr(S_\Phi(X))$ is a natural result with respect to the monomial order $\leq_\db$.
%%
%\mlabel{re:comp}
%\end{remark}

\noindent {\bf Acknowledgements}: This work was supported by the National Natural Science Foundation of China (Grant No. 11201201, 11371177 and 11371178) and
the Natural Science Foundation of Gansu Province (Grant No. 1308RJZA112).

%\addcontentsline{toc}{section}{\numberline {}References}
%


\begin{thebibliography}{abcdsfgh}
%\mbibitem{Agg} M. Aggarwal, On Rota-Baxter Nijenhuis TD algebra, Ph.~D. Thesis, Rutgers University at Newark, 2014.

\mbibitem{Agu} M. Aguiar, Pre-Poisson algebras, {\em Lett. Math. Phys.} {\bf 54} (2000) 263-277.

%\mbibitem{Ag} M. Aguiar,  On the associative analog of Lie bialgebras, {\em J. Algebra}. {\bf 244} (2001), 492--532.
%
%\mbibitem{AGKO} G. E. Andrews, L. Guo, W. Keigher and K. Ono, Baxter algebras and Hopf algebras,
%    {\em Trans. Amer. Math. Soc.} {\bf 355} (2003), 4639--4656.

\mbibitem{BN} F. Baader and T. Nipkow, {\em Term Rewriting and All That} (Cambridge University Press, Cambridge, 1998).

\mbibitem{Bai} C. Bai, A unified algebraic approach to the classical Yang-Baxter equations, {\em J.  Phys. A: Math. Theor.} {\bf 40} (2007) 11073--11082.

%\mbibitem{Bai2} C. Bai, Double construction of Frobenius algebras, Connes cocycles and their duality, {\em J. Noncommut. Geom.} {\bf 4} (2010), 475--530.

%\mbibitem{BGN} C. Bai, L. Guo and X. Ni, Nonablian generalized Lax pairs, the classical Yang-Baxter equation and postLie algebra, {\em Comm. Math. Phys.} {\bf 297} (2010) 553--596.
%
%\mbibitem{BBGN} C. Bai, O. Bellier, L. Guo and X. Ni, Splitting of operations, Manin products and Rota-Baxter operators, {\em Int. Math. Res. Not. IMRN.} (2013), 485--524.

\mbibitem{Ba} G. Baxter, { An analytic problem whose solution
    follows from a simple algebraic identity,}
    {\em Pacific J. Math.} {\bf 10} (1960) 731--742.

\mbibitem{BKVT} M. Bezem, J. W. Klop, R. de Vrijer and Terese, Term rewriting systems (Cambridge University Press, 2003).

\mbibitem{Birk} G. Birkhoff, Moyennes de fonctions born\'{e}es, Coil. Internat. Centre Nat. Recherthe Sci. (Paris), {\em Alg\'{e}bre Th\'{e}forie Nombres} {\bf 24} (1949) 149-153.

\mbibitem{BC} L.\,A. Bokut and Y. Chen, Gr\"obner-Shirshov bases and their calculations, {\em Bulletin of Mathematical Sciences} {\bf 4} (2014) 325-395.

\mbibitem{BCC} L.\,A. Bokut, Y. Chen and Y. Chen, Composition-Diamond lemma for tensor product of free algebras, {\em J. Algebra} {\bf 323} (2010) 2520-2537.

%\mbibitem{BCD} L.\,A. Bokut, Y. Chen and Xueming Deng, Gr\"obner-Shirshov bases for Rota-Baxter algebras, {\em Siberian Math. J.} {\bf 51} (2010), 978-988.
%
%\mbibitem{BCL} L.\,A. Bokut, Y. Chen and Y. Li, Gr\"obner-Shirshov bases for categories, in ``Operads and Universal Algebra", World Scientific Press, (2012) 1-23.

\mbibitem{BCQ} L.\,A. Bokut, Y. Chen and J. Qiu, Gr\"{o}bner-Shirshov bases for associative algebras with multiple operators and free Rota-Baxter algebras, {\em J. Pure Appl. Algebra} {\bf 214} (2010) 89-110.

\mbibitem{Bong} N.\,H. Bong, Some Apparent Connection Between Baxter and Averaging
Operators, {\em J. Math. Anal. Appl.} {\bf 56} (1976) 330-345.

\mbibitem{Buch} B. Buchberger, An algorithm for finding a basis for the residue class ring of a zero-dimensional polynomial ideal, PhD thesis, University of Innsbruck,
1965 (in German).

\mbibitem{Cao} W. Cao, An algebraic study of averaging operators, PhD thesis, Rutgers University at Newark, 2000.

%\mbibitem{CGM} J. Cari{\~{n}}ena, J. Grabowski and G. Marmo, Quantum bi-Hamiltonian systems, {\em Internat. J. Modern Phys. A}, {\bf{15}}, (2000), 4797-4810.
%
%\mbibitem{Co} R. Cohn, Difference algebra, Interscience Publishers John Wiley \& Sons, New York-London-Sydeny 1965.

\mbibitem{Coh} P.M. Cohn, \emph{Further Algebra and Applications} (Springer, second edition~2003).

%\mbibitem{CK1} A. Connes and D. Kreimer, Renormalization in quantum field theory and the Riemann-Hilbert problem. I. The Hopf algebra structure of graphs and the main theorem, {\em{Comm. Math. Phys.}}, {\bf 210} (2000), 249--273.
%
%\deleted{\mbibitem{DF} V. Drersky and E. Fromanek, Polynomial Identity
%Rings, Birkh\"auser, 2004.}
%
%\mbibitem{EG} K. Ebrahimi-Fard and L. Guo, Mixable shuffles,  quasi-shuffles and Hopf algebras, {\em J. Algebraic Combinatorics}, {\bf 24} (2006), 83-101.
%
%\mbibitem{EG1} K. Ebrahimi-Fard and L. Guo,
%    Free Rota-Baxter algebras and dendriform algebras,  {\em J.
% Pure Appl. Algebra}, {\bf 212} (2008), 320--339.
%
%\mbibitem{EGK} K. Ebrahimi-Fard, L. Guo and D. Kreimer,
%     { Spitzer's Identity and the Algebraic Birkhoff Decomposition
%in pQFT}, {\em J. Phys. A: Math. Gen.} {\bf 37} (2004), 11037--11052.


%\mbibitem{EGM} K. Ebrahimi-Fard, L. Guo and D. Manchon, Birkhoff type decompositions and the Baker-Campbell-Hausdorff recursion, {\em Comm. in Math. Phys.} {\bf 267} (2006), 821--845.
%
%\mbibitem{EL} K. Ebrahimi-Fard and P. Lerous, Generalized shuffles related to Nijenhuis and TD-algebras, {\em Comm. Algebra.} {\bf 37} (2009), 3065--3094.
%
%%\mbibitem{Fr} J. M. Freeman, On the classification of operator identities, {\em Studies in Appl. Math.} {\bf 51} (1972), 73--84.
%\mbibitem{Fard} K.\,E. Fard, Loday-Type Algebras and the Rota¨CBaxter Relation, \emph{Letters in Mathematical Physics} {\bf 61} 2002. 139-147.

\mbibitem{Fe} J. Kamp\'{e} de F\'{e}riet, Introduction to the statistical theory of turbulence, correlation and spectrum, Lecture Series No.8, prepared  by S. I. Pai, The Institute of Fluid Dynamics and Applied Mathematics, University  of  Maryland (1950-51).

\mbibitem{Fech} W. Fechner, Inequalities conected with averaging operators, Indag. Math. {\bf 24} (2013) 305-312.

\mbibitem{GM} J.\,L.\,B. Gamlen and J.\,B. Miller, Averaging and Reynolds Operators
on Banach Algebras II. Spectral Properties of Averaging Operators,
{\em J. Math. Anal. Appl.} {\bf 23} (1968) 183-197.

\mbibitem{GG} X. Gao and L. Guo, Operators, rewriting systems and Gr\"{o}bner-Shirshov bases, Preprint.

\mbibitem{GGSZ} X. Gao, L. Guo, W. Sit and S. Zheng, Rota-Baxter type operators, rewriting systems and Gr\"{o}bner-Shirshov bases, http://arxiv.org/pdf/1412.8055v1.pdf, received by J. Symb. Comput.

\bibitem{GGZ} X. Gao, L. Guo and S. Zheng, Construction of free commutative integro-differential algebras by the method of Gr\"{o}bner-Shirshov bases, {\em J. Algebra and Its Applications} {\bf 13} (2014) 1350160.

%\mbibitem{GGZ} X. Gao, L. Guo and S. Zheng, Construction of free commutative integro-differential algebras by the method of Gr\"{o}bner-Shirshov bases, {\em J. Algebra and Its Applications} {\bf 13} (2014), 1350160.

\mbibitem{Gop} L. Guo, Operated semigroups, Motzkin paths and rooted trees, {\em J Algebra Comb.} {\bf 29} (2009) 35-62.

\mbibitem{Gub} L. Guo, An Introduction to Rota-Baxter Algebra (International Press (US) and Higher Education Press (China), 2012).

%\mbibitem{GK1} L. Guo and W. Keigher, {Baxter algebras and shuffle products}, {\em Adv. Math.} {\bf 150} (2000), 117--149.
%
%\mbibitem{GK3} L. Guo and W. Keigher, {On differential Rota-Baxter algebras}, {\em J. Pure Appl. Algebra} {\bf 212} (2008), 522--540.

\mbibitem{GP} L. Guo, J. Pei, Averaging algebras, Schr\"oder numbers, rooted trees and operads, {\em J Algebra Comb.} {\bf 42} (2015) 73-109.

%\mbibitem{G-S} L. Guo and W. Yu Sit, {Enumeration of Rota-Baxter
%words}, Proceedings ISSAC 2006, Genoa, Italy, ACM Press, 124--131

\mbibitem {GSZ} L. Guo, W. Sit and R. Zhang, {Differemtail Type Operators and Gr\"obner-Shirshov Bases},   {\em J. Symb. Comput.} {\bf 52} (2013) 97-123.

%\mbibitem{GZ} L. Guo and B. Zhang, Renormalization of multiple zeta values, {\em J. Algebra}. {\bf 319} (2008), 3770--3809.
%
%\mbibitem{Ha} E. Harzheim, Ordered Sets, Springer, 2005.

\mbibitem{Hi} H. Hironaka, Resolution of singulatities of an algebraic variety over a field if characteristic zero, I, II, {\em Ann. Math.} {\bf 79} (1964) 109-203, 205-326.

\mbibitem{Kell} J.\,L. Kelley, Averging operators on $C_\infty(
X)$, {\em Illinois J. Math.} {\bf 2} (1958) 214-223.

\mbibitem{KF} J. Kamp\'{e} de F\'{e}riet, L\'{e}tat actuel du probl\'{e}me de la turbulaence (I and II), La Sci. A\'{e}rienne {\bf 3} (1934) 9-34, {\bf 4} (1935) 12-52.

\mbibitem{Kol} E. Kolchin, Differential algebraic groups (Academic Press, Inc., Orlando, FL, 1985).

\mbibitem{Ku} A.\, G. Kurosh, Free sums of multiple operator algebras, {\em Siberian. Math. J.} {\bf 1} (1960) 62-70 (in Russian).

%\mbibitem{LG} P.\, Lei and L.\,Guo, Nijenhuis algebras, NS algebras and N-dendriform algebras, {\em Frontiers Math} {\bf 7} (2012) 827--846.
%
%\mbibitem{Le} P. Leroux, On some remarkable operads constructed
%    from Baxter operators, preprint, Nov. 2003,
%arXiv:math.QA/0311214.
%
%\mbibitem{AL} A. Levin, {\it Difference Algebra}. {\em Algebra and Applications}, {\bf8} (2008), Springer.


\mbibitem{Lo2} J.\,L. Loday, Dialgebras, in Dialgebras and related operads, {\em Lecture Notes in Math.} 1763 (2002) 7-66.


%\mbibitem{Lo} J.\,L. Loday, Scindement d'associativt\'{e} et alg\'{e}bres de Hopf. in the proceedings of conference in honor of Jean Leray, Nantes (2002), {\em S\'{e}minaire et Congr\'{e}s (SMF)} {\bf 9} (2004), 155-172.

%\mbibitem{LR} A.\,V. Lunts and A.\,K. Rosenberg, Differential operators on noncommutative rings, {\em Selecta Math.\,(N.S.)} {\bf 3} (1997), 335-359.

%\mbibitem{LR2} \,V.\,A. Lunts and \,A.\,K. Rosenberg,  Localization
%for quantum groups.  Selecta Math.\,(N.S.) {\bf 5} (1999),
%123--159.

\mbibitem{Mill} J.\,B. Miller, Averaging and Reynolds operators on Banach algebra I, Representation by derivation and antiderivations, {\em J. Math. Anal. Appl.} {\bf 14} (1966) 527-548.

\mbibitem{Moy} S.\,T.\,C. Moy, Characterizations of conditional expectation as a transformation on function spaces, {\em Pacific J. Math.} {\bf 4} (1954) 47-63.

%\mbibitem{N} M.\,H.\,A.\,Newman, On theories with a combinatorial definition of ``equivalence'', {\em Annals of Math., Second Series}, 43(2), 1942, pp.\,223--243.

\mbibitem{Oh} E. Ohlebusch, Advanced topics in term rewriting (Springer, New York, 2002).

\mbibitem{PS} M. van der Put and M. Singer, Galois Theory of Linear Differential Equations, ( Springer, 2003).


%\mbibitem{Pr} C. Procesi, Rings with polynomial identities {\em
%Pure  Appl. Math.}, {\bf  17} (1973), Marcel Dekker, Inc., New
%York.

%\mbibitem{Rom} S. Roman, Lattices and Ordered Sets, Springer,
%2008.

%\mbibitem{Ro1} G.-C. Rota, { Baxter algebras and combinatorial
%identities I, II,} {\em Bull. Amer. Math. Soc.} {\bf 75}  (1969),
%325--329, 330--334.

\mbibitem{Re}  O. Reynolds, On the dynamic theory of incompressible viscous fluids and the determination of the criterion, {\em Phil. Trans. Roy. Soc. A} {\bf 136} (1895) 123-164.

\mbibitem{Ri} J.\,F. Ritt, {Differential Algebra,} Colloquium publications, Vol. 33 (Amer. Math. Soc., New York, 1950).

\mbibitem{Ro0} G.\,C. Rota, Reynolds operators, Proceedings of Symposia in Applied Mathematics, Vol. XVI (1964), Amer. Math. Soc., Providence, R.I., 70-83.

\mbibitem{Ro1} G.\,C. Rota, Baxter algebras and combinatorial identities I, II, Bull. Amer.Math. Soc. {\bf 75} (1969) 325-329, 330-334.

%\mbibitem{Ro2} G.\,C. Rota, { Baxter operators, an introduction,}
%    In: ``Gian-Carlo Rota on Combinatorics, Introductory papers
%    and commentaries", Joseph P.S. Kung, Editor,
%    Birkh\"{a}user, Boston, 1995.

%\mbibitem{Ro} L. H. Rowen, Polynomial identities in ring theory.
%Pure  Appl. Math., {\bf 84} (1980)  Academic Press, Inc.

%\mbibitem{STS} M.A. Semenov-Tian-Shansky,
%    { What is a classical $r$-matrix?},
%   {\em Funct. Ana. Appl.}, {\bf 17} (1983), 259--272.

\mbibitem{Sh} A.\,I. Shirshov, Some algorithmic problem for $\epsilon$-algebras, {\em Sibirsk. Mat. Z.} {\bf 3} (1962) 132-137.

%\mbibitem{Sit} W.\,Y. Sit, {\it Mathematica} Notebooks, {http://scisun.sci.ccny.cuny.edu/~wyscc/research.html}.

\mbibitem{Tri} A. Triki, Extensions of Positive Projections and Averaging Operators, {\em J. Math. Anal.} {\bf 153} (1990) 486-496.


%\mbibitem{Sit1} W.Y. Sit, Revisiting Rewriting Systems in Algebra, http://www.sci.ccny.cuny.edu/~ksda/PostedPapers/Sit021315.pdf


%\mbibitem{ZhC} C. Zhou,  Free TD-algebras, In {\em Operads and Universal Algebra,} World Scientif Publishing Company, 2012, 199-215.
%
%\mbibitem{ZG} S. Zhou and L. Guo, Rota-Baxter TD algebra and quinquedendriform algebra, preprint, 2013.

\mbibitem{ZheG} S. Zheng and L. Guo, Relative locations of subwords in
free operated semigroups and Motzkin words, {\em Frontiers of Mathematics in China} {\bf 10} (2015) 1243-1261.

\end{thebibliography}
\end{document}